\numberwithin{equation}{section}
\def\D{{\mathbb D}}  \def\T{{\mathbb T}}
\def\C{{\mathbb C}}  \def\N{{\mathbb N}}
\def\Z{{\mathbb Z}}
\def\({\left(}       \def\){\right)}
\newtheorem{theorem}{Theorem}[section]
\newtheorem{lemma}[theorem]{Lemma}
\newtheorem{proposition}[theorem]{Proposition}
\newtheorem{corollary}[theorem]{Corollary}
\theoremstyle{definition}
\newtheorem{definition}[theorem]{Definition}
\newtheorem{example}[theorem]{Example}
\theoremstyle{remark}
\newtheorem{remark}[theorem]{Remark}
\numberwithin{equation}{section}
\DeclareMathOperator*{\esssup}{ess\,sup}
\begin{document}

\title[ Average radial integrability spaces, tent spaces and operators]{Average radial integrability spaces, tent spaces and integration operators}
\author[T. Aguilar-Hern\'andez ]{Tanaus\'u Aguilar-Hern\'andez}
\address{Departamento de Matem\'atica Aplicada II and IMUS, Escuela T\'ecnica Superior de Ingenier\'ia, Universidad de Sevilla,
Camino de los Descubrimientos, s/n 41092, Sevilla, Spain}
\email{taguilar@us.es}

\author[P. Galanopoulos]{Petros Galanopoulos}
\address{Department of Mathematics, Aristotle University of Thessaloniki, 54124, Thessaloniki, Greece}
\email{petrosgala@math.auth.gr }


\date{\today}

\keywords{Mixed norm spaces, radial integrability, tent spaces, Carleson measures, integration operator}

\thanks{This research was supported in part by Ministerio de Econom\'{\i}a y Competitividad, Spain, MTM2015-63699-P,  and Junta de Andaluc{\'i}a, FQM-133}

\maketitle

\begin{abstract}
We deal with a Carleson measure type problem  for  the tent  spaces  $AT_{p}^{q}(\alpha)$ in  the unit disc of the complex plane. They consist of the analytic functions of the  tent spaces $T_{p}^{q}(\alpha)$  introduced by Coifman, Meyer and Stein. Well known spaces like the Bergman spaces arise as a special case of this  family.
 Let $s,t,p,q\in (0,\infty)$ and $\alpha >0\,.$
We find necessary and sufficient conditions on a positive Borel measure $\mu$ of the unit disc in order to exist a positive constant $C $ such that
\begin{align*}
\int_{\mathbb T} \left(\int_{\Gamma (\xi)} |f(z)|^{t}\ d\mu(z)\right)^{s/t}\ |d\xi|\leq C \|f\|^s_{T_{p}^{q}(\alpha)} \,,\quad
f\in AT_{p}^{q}(\alpha)\,,
\end{align*}
where $\Gamma (\xi) = \Gamma_M (\xi)=\{ z\in \mathbb D : |1-\bar{\xi} z |< M (1-|z|^2)\},$ $M> 1/2 $ and  $\xi$ is a boundary point of the unit disc.
This problem was originally posed by D. Luecking.
We apply our results to the study of the action of the integration operator $T_g$, also known as Pommerenke operator, between the average integrability spaces $RM(p,q) ,$ for\,$p,q\in [1,\infty)$.
 These spaces have appeared recently in the work of the first author with M.D. Contreras and  L. Rodríguez-Piazza. We also consider the action  from an  $RM(p,q)$ to a Hardy space $H^s$, where $ p,q,s \in [1,\infty)$.
 \end{abstract}

\tableofcontents

\section{Introduction}
Let $\mathbb D$ be the unit disc in the complex plane, $\mathbb T$ be the unit circle and $\mathcal H(\mathbb D)$ be the collection of the analytic functions in $\mathbb D$. 
Assume that  $\mathbb X$  is a Banach space of analytic functions in $\mathbb D$. A positive Borel measure $\mu$ on the unit disc is called $(s,\mathbb X)$-Carleson measure if there is a positive constant $C>0$ such that 
\begin{equation}\label{Carleson}
\int_{\mathbb D}\, |f(z)|^s\,d\mu(z) \leq C \, \|f\|^s_{\mathbb X}\,, \quad f \in \mathbb{X}\,.
\end{equation}
The notion of $(s,\mathbb X)$-Carleson measures appeared naturally in the work of L. Carleson on the theory of interpolating sequences for the Hardy spaces \cite{G}.
Recall that an $f \in \mathcal  H(\mathbb D)$ is in the Hardy space $H^p, \, p\in (0,\infty),$ if 
\begin{equation}\label{Hardy}
\|f\|_{H^p}^p =\sup_{r\in [0,1)}\, \int_0^{2\pi}\, |f(re^{i\theta})|^p\, d\theta < +\infty\,.
\end{equation}
A standard reference for the Hardy space theory is \cite{Du}.

Carleson proved that the $(p,H^p)$-Carleson measures are exactly described by the geometric condition 
\begin{equation}\label{CMHardy}
\sup_{I\subseteq \mathbb T} \frac{\mu(S(I))}{|I|}\,<+\infty,
\end{equation}
where the supremum is taken over all arcs $I$ of the unit circle, $|I|$ is their arc length  and 
$$
S(I)=\left\{ z\in \mathbb D : 1-|I| \leq |z| <1\,\,\, \text{and}\,\,\, \frac{z}{|z|} \in I \right\} 
$$ 
is what we call a Carleson square. 
 Since then, embeddings of the type (\ref{Carleson}) have been studied in the context of several spaces of analytic functions. See \cite{Carleson_1958}, \cite{Carleson_1962}, \cite{Luecking}, \cite{Pelaez_Rattya_Sierra_2015}.
 From this point and after we agree to use the following simplification. If $\mu$ is a positive Borel on $\mathbb D$ that satisfies (\ref{CMHardy})
 then  we will refer to it 
simply as a Carleson measure.

In \cite{Luecking} Luecking, among other Carleson measure type problems, considered a version of (\ref{Carleson})  for measures $\mu$ on the upper half-plane $\mathbb{H}$,   for tent spaces of analytic functions defined on $\mathbb H$ and for the $n$-derivatives of the functions in those spaces.  See Theorem 3,
Section 6 in \cite{Luecking}.
However, the proof that Luecking provided serves for the case of the unit disc as well. 
Since we are interested in the unit disc setting and for $n=0$, which is the case without the derivative, below we state  the question of Luecking  under the
latter considerations.  

We say that a measurable  function $f$  in $\mathbb D$ belongs to $T_{p}^{q}(\alpha)$, $\,p,q,\alpha\in (0,\infty)$,  if 
\begin{equation}\label{def tent}
\|f\|^q_{T_{p}^{q}(\alpha)} = \int_{\mathbb T}\,\left(\int_{\Gamma(\xi)}\,|f(z)|^p\, \frac{dA(z)}{(1-|z|^2)^{2-\alpha}}\right)^{\frac qp}\, |d\xi|<+\infty \,.
\end{equation}
For a  $\xi\in \T$ we denote as 
	\begin{align*}
	\Gamma (\xi) = \Gamma_M (\xi)=\{ z\in \mathbb D : |1-\bar{\xi} z |< M (1-|z|^2)\},
		\end{align*}
		where  $M> 1/2 $. The $dA(z)$ stands for the area Lebesque measure on $\mathbb D$.
The tent spaces $T_{p}^{q}(\alpha)$ were introduced by Coifman, Meyer and Stein\cite{CMS}.   
We are interested in their subspaces 
$$
AT_{p}^{q}(\alpha)= T_{p}^{q}(\alpha) \cap \mathcal H(\mathbb D)\,.
$$
 The restriction $\alpha >0$ has meaning only in the case of the analytic tent spaces since an $AT_{p}^{q}(0)$ does not contain non trivial $f\in \mathcal H(\mathbb D)$.
 It is customary to use in the definition of the tent spaces
the  regions $\Gamma(\xi)$. In the next seccion we will see that they can be equivalently described by using other type of non tangential approach regions.

Luecking dealed with the following question. Consider $s,p,q,\alpha\in (0,\infty)$.  Find the necessary and sufficient conditions on  the positive Borel measure $\mu$ in $\mathbb D$  in order to exist a positive constant $C$ 
such that 
\begin{align}\label{problem1}
\left(\int_{\mathbb{D}} |f(z)|^{s}\ d\mu(z)\right)^{1/s}\leq C \|f\|_{T_{p}^{q}(\alpha)}\,, \quad\quad  f\in AT_{p}^{q}(\alpha)\,.
\end{align}
Taking into account the terminology introduced with (\ref{Carleson}) we are allowed to say that this is 
an  $(s,AT_{p}^{q}(\alpha))-$ Carleson measure problem.

 The  family $AT_{p}^{q}(\alpha)$  includes the Bergman spaces. 	
Observe that in the case $p=q$ and $\alpha=1$, by an application of the Fubini theorem in (\ref{def tent}), we get the Bergman space 
$A^p$ that is the space of those $f\in \mathcal H(\mathbb D)$ with the integrability property
\begin{equation}\label{Bergman}
\int_{\mathbb D}\, |f(z)|^p\, dA(z) <+\infty\,.
\end{equation}
For more information on Bergman spaces we propose to the interested reader \cite{HKZ}, \cite{duren_schuster_2004}.

Luecking in \cite{Luecking} pursued  these ideas even further by posing the following: 

{\bf Problem 1:}\, Let $p,q,s,t, \alpha > 0$. Characterize the positive Borel measures $\mu$ in $\mathbb D$ for which there is a positive constant $C$
such that 
\begin{equation}\label{main problem}
\int_{\T} \left(\int_{\Gamma(\xi)} |f(z)|^{t}\ d\mu(z)\right)^{s/t}\ |d\xi|\leq C \|f\|^s_{T_{p}^{q}(\alpha)} \,,\quad
f\in AT_{p}^{q}(\alpha)\,.
\end{equation}
  
This is a more general problem compaired to (\ref{problem1})  from the point of view that if we set $s=t$ then  a change in the order of integration converts  (\ref{main problem}) to the $(s,T_{p}^{q}(\alpha))-$ Carleson measure problem (\ref{problem1}) for the measure $(1-|z|^2)d\mu(z)$. In addition, if  we set above
 $p=q$ and $\alpha=1$ then Problem 1 comes down to the  work of Z. Wu  \cite{Wu_2006}.

We  answer with the following theorem.
\begin{theorem}\label{main answer}
	Let $0<  p,q,s,t,\alpha <+\infty$, $Z=\{z_k\}$ an $(r,\kappa)$-lattice, and let $\mu$ be a positive Borel measure on $\D$. Then the following are equivalent.
	\begin{enumerate}
		\item There is a constant $C>0$ such that 
		\begin{align*}
\left(\int_{\T} \left(\int_{\Gamma(\xi)} |f(w)|^{t}\ d\mu(w)\right)^{s/t}\ |d\xi|\right)^{1/s}\leq C \|f\|_{T_{p}^{q}(\alpha)},\quad f\in  AT_{p}^{q}(\alpha).
		\end{align*}
		\item The measure $\mu$ satisfies the following:
		\begin{enumerate}
			\item If $0< s<q<+\infty$, $0< t<p<+\infty$, then
			\begin{align*}
			\int_{\T} \left(\sum_{z_k\in \Gamma(\xi)} \left(\frac{\mu^{1/t}(D(z_k,r))}{(1-|z_k|)^{\alpha/p}}\right)^{\frac{pt}{p-t}} \right)^{\frac{(p-t)qs}{(q-s)pt}}\ |d\xi|<+\infty.
			\end{align*}
			\item If $0< s< q<+\infty$, $0< p\leq t<+\infty$, then
			\begin{align*}
			\int_{\T} \left(\sup_{z_k\in \Gamma(\xi)} \frac{\mu^{1/t}(D(z_k,r))}{(1-|z_k|)^{\alpha/p}} \right)^{\frac{qs}{q-s}}\ |d\xi|<+\infty.
			\end{align*}
			\item If $0< q< s<+\infty$, $0< p,t<\infty$ or $0< q=s<+\infty$, $0< p\leq t<+\infty$, then
			\begin{align*}
			\sup_{k} \frac{\mu^{1/t}(D(z_k,r))}{(1-|z_k|)^{\frac{\alpha}{p}+\frac{1}{q}-\frac{1}{s}}}<+\infty.
			\end{align*}
			\item If $0< q=s<+\infty$, $0< t< p<+\infty$, then
			\begin{align*}
			\sup_{\xi\in\T} \left(\sup_{\xi\in I}  \sum_{z_{k}\in T(I)} \left(\frac{\mu^{1/t}(D(z_k,r))}{(1-|z_k|)^{\alpha/p}}\right)^{\frac{pt}{p-t}} (1-|z_k|)\  \right)^{\frac{p-t}{pt}}<+\infty,
			\end{align*}
				where $I$ runs the intervals in $\T$, $S(I)=\left\{z\in\D\ :\ 1-|I|\leq |z|<1\ \text{and}\ \frac{z}{|z|}\in I\right\}$ and $I$ is its arc length.
		\end{enumerate}
	\end{enumerate}
\end{theorem}

\par As a consequence of our main result we present two applications that occure naturally. It turns out that  
this type of  emmbedings 
are the proper tool  for the study of the integration operator 
which was 
introduced by Pommerenke in \cite{Pom}.  To be specific, let 
$$
g: \mathbb D \rightarrow \mathbb C
$$
be an analytic function. We call integration operator the map
$$
T_g(f)(z)=\int_0^z\, f(\zeta)\,g'(\zeta)\, d\zeta\,, \quad\quad z\in \mathbb D,
$$
where $f \in \mathcal H(\mathbb D)$.  
Pommerenke considered $T_g$ on the Hilbert Hardy space $H^2$ of the unit disc. 
He proved that its boundedness  is characterized exactly when the symbol $g$ is a BMOA function.
That work was the starting point for a great number of results about the properties of $T_g$
on several spaces of analytic functions.\,See \cite{Pau_Pelaez_2010}, \cite{Wu_2011}, \cite{Pau_Pelaez_2012}, \cite{MiihkinenPauPeralaWang2020}, \cite{Aguilar-Contreras-Piazza_2}.

A. Aleman and A. Siskakis in \cite{AS} completed the scene for $T_g$  on the Hardy spaces. In 
\cite{AS2} they extended the study to the Bergman spaces where they proved that
\begin{equation}\label{AS}
T_g \in B(A^p),\, \ p\in (0,\infty)\,\,\, \Leftrightarrow\,\,\, g\in \mathcal{B}, 
\end{equation}
where $\mathcal B$ stands for the Bloch space  which consists of those $g \in \mathcal H(\mathbb D)$ 
 with the property
 $$
 \sup_{z \in \mathbb D}\,|g'(z)|\, (1-|z|^2) \, <+\infty\,.
 $$
 Based on this condition, there is defined a more general family of spaces.
 Let $\gamma >0$,  we say that a function $g \in \mathcal H(\mathbb D)$ belongs to the Bloch type space  ${\mathcal B}^{\gamma }$  if
  $$
 \sup_{z \in \mathbb D}\,|g'(z)|\, (1-|z|^2)^{\gamma} \, <\infty\,.
 $$

  Recently, T. Aguilar-Hernández, M. D. Contreras and L. Rodríguez-Piazza in \cite{Aguilar-Contreras-Piazza}, \cite{Aguilar-Contreras-Piazza_2} defined the spaces of  average radial integrability. They say that  a function $f\in \mathcal H(\mathbb D)$ belongs to $RM(p,q),\, p,q\in (0,\infty),$ if 
 \begin{align}\label{RM}
\|f\|^q_{RM(p,q)}=\int_0^{2\pi}\,\left(\int_0^1\, |f(re^{i\theta})|^p\, dr\right)^{\frac qp}\,d\theta<+\infty.
\end{align} 
This expression is the  natural extension of the property of bounded radial  integrability 
	\begin{equation}\label{BRINT}
		\sup_{\theta \in [0,2\pi)} \,\int_0^1\, |f(re^{i\theta})|\, dr < +\infty 
	\end{equation}
	of an $f\in \mathcal H(\mathbb D)$. According to the Féjer-Riesz inequality  the elements of the Hardy space $H^1$ fullfil the property (\ref{BRINT}) \cite{Du}.

For the case $p=\infty$, $0< q\leq  +\infty$, the spaces $RM(\infty,q)$ consist of analytic functions $f$ on $\D$ such that
\begin{align*}
\|f\|^q_{RM(\infty,q)}&=\int_0^{2\pi}\,\left(\sup_{0\leq r<1} |f(re^{i\theta})|\, \right)^{q}\,d\theta<+\infty, \quad \text{ if } q<+\infty,\ \text{and}\\
\|f\|_{RM(\infty,\infty)}&=\|f\|_{H^{\infty}}.
\end{align*}


It is clear that for $p=q$ 
$$ RM(p,q) \equiv A^p \,. $$
Moreover, with the help of the H\"older's inequality we can verify that the $RM(p,q)$
spaces always stand between two Bergman spaces. To be precise, if $p<q$ then
$$
A^q \subset RM(p,q) \subset A^p \,
$$
and if $q<p$ the containments are the other way around. All the inclusions are strict.

Notice that 
$$RM(\infty,q)\equiv H^q\,.$$ 
This is due to the equivalent description of Hardy spaces in terms of the radial maximal function 
that is 
$$
\|f\|^q_{H^q}\asymp \int_0^{2\pi} \sup_{r\in (0,1)}\,|f(re^{i\theta})|^q\, d\theta \,.
$$
See \cite{G}.

The authors  in \cite{Aguilar-Contreras-Piazza_2} proved that the action of 
$T_g$  in $RM(p,q)$ is  similar to that  in Bergman spaces. To put it simply, the Bloch condition characterizes the boundedness of 
$$
T_g : RM(p,q) \rightarrow RM(p,q)\,.
$$
In order to obtain this result, they prove that condition (\ref{RM})  is equivalent to the condition 
$$
\int_0^{2\pi}\,\left(\int_0^1\, |f'(re^{i\theta})|^p (1-r)^p\, dr\right)^{\frac qp}\,d\theta<+\infty\,.
$$

Here we consider the more general question  stated below.

{\bf Problem 2:} Let $p,q,s,t \in [1,\infty) $. Characterize the boundedness of 
$$
T_g : RM(p,q) \rightarrow RM(t,s)\,.
$$

The answer we give is based on an equivalent representation of the $RM(p,q)$  as tent spaces. This is actually an independent result
contained in the structural theory of the Triebel-Lizorkin spaces and \cite{Cohn_Verbitsky_200} can serve as a reference for that. 
Postponing the details for the next section we just present  this equivalent description according to which an $f\in \mathcal H(\mathbb D)$ belongs to an $RM(p,q)$
if and only if 
\begin{equation*}
\int_{\mathbb T}\,\left(\int_{\Gamma(\xi)}\,|f'(z)|^p\, (1-|z|^2)^{p-1}dA(z)\right)^{\frac qp}\, |d\zeta|<+\infty \,.
\end{equation*}

Combining appropriately this condition and the results of Theorem \ref{main answer} we prove the following result.
\begin{theorem}
	Let $1\leq p,q,s,r<+\infty$ and  $g\in \mathcal{H}(\D)$. The following statements are equivalent:
	\begin{enumerate}
		\item The operator $T_{g}:RM(p,q)\rightarrow RM(t,s)$ is bounded.
		\item The function $g\in \mathcal{H}(\D)$ satisfies that
	\begin{enumerate}
\item If $1\leq s<q<+\infty$, $1\leq t<p<+\infty$, 
\begin{align*}
g\in RM\left(\frac{pt}{p-t},\frac{qs}{q-s}\right).
\end{align*}
\item If $1\leq s< q<+\infty$, $1\leq p\leq t<+\infty$, 
\begin{align*}
\int_{\mathbb T} \left(  \esssup_{z\in \Gamma(\xi)} |g'(z)|(1-|z|)^{1+\frac{1}{t}-\frac{1}{p}}  \right)^{\frac{qs}{q-s}} \,|d\xi|<+\infty
\end{align*}
\item If $1\leq q< s<+\infty$, $1\leq p,t<\infty$ or $1\leq q=s<+\infty$, $1\leq p\leq t<+\infty$, 
\begin{align*}
g\in\mathcal{B}^{1+\frac{1}{t}+\frac{1}{s}-\frac{1}{p}-\frac{1}{q}}.
\end{align*}
\item If $1\leq q=s<+\infty$, $1\leq t< p<+\infty$, the measure $$d\mu(z)=|g'(z)|^{\frac{pt}{p-t}}(1-|z|)^{\frac{pt}{p-t}}\ dm(z)$$ is a Carleson measure.
	\end{enumerate}
\end{enumerate}
\end{theorem}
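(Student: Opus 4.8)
The strategy is to recast the boundedness of $T_g$ as one of the embeddings characterized in Theorem~\ref{main answer}, and then to read off its four conditions for a measure built from $g'$.

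\emph{Step 1: reduction to an embedding.} I would first use the equivalent tent-space description of $RM$ recalled above. Applied to the target space $RM(t,s)$, together with the elementary equivalence $\|F\|_{AT_{t}^{s}(\a)}\asymp|F(0)|+\|F'\|_{AT_{t}^{s}(\a+t)}$ for analytic $F$ (used with $\a=1$), and the identities $(T_gf)'=fg'$, $(T_gf)(0)=0$, this yields
\begin{align*}
\|T_gf\|_{RM(t,s)}^{s}\asymp\int_{\T}\left(\int_{\Gamma(\xi)}|f(z)|^{t}\,d\mu(z)\right)^{s/t}|d\xi|,\qquad d\mu(z)=|g'(z)|^{t}(1-|z|^{2})^{t-1}\,dA(z).
\end{align*}
Applied to the source space, the same description gives $\|f\|_{RM(p,q)}\asymp\|f\|_{T_{p}^{q}(1)}$ for every $f\in\mathcal H(\D)$. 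Hence $T_g\colon RM(p,q)\to RM(t,s)$ is bounded if and only if the inequality in Theorem~\ref{main answer}(1) holds for this particular $\mu$ with $\a=1$; moreover, no separate argument for the well-definedness of $T_gf$ in $RM(t,s)$ is needed, since the displayed equivalence shows that finiteness of the right-hand side already forces $T_gf\in RM(t,s)$.

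\emph{Step 2: reading off cases (a), (b), (c).} Fix an $(r,\kappa)$-lattice $Z=\{z_k\}$. Since $|g'|^{t}$ is subharmonic and $1-|z|^{2}\asymp1-|z_k|$ on $D(z_k,r)$, the sub-mean-value inequality gives $\mu(D(z_k,r))\gtrsim|g'(z_k)|^{t}(1-|z_k|)^{t+1}$, and comparing with the supremum of $|g'|^{t}$ over $D(z_k,r)$ (absorbed, after passing to a finer lattice, into a neighbouring lattice point) gives the reverse bound, so that for the sums and suprema occurring in Theorem~\ref{main answer} one may use
\begin{align*}
\frac{\mu^{1/t}(D(z_k,r))}{(1-|z_k|)^{1/p}}\asymp|g'(z_k)|\,(1-|z_k|)^{1+\frac1t-\frac1p}.
\end{align*}
Because $\frac1t-\frac1p=\frac{p-t}{pt}$, raising this to the power $\frac{pt}{p-t}$ collapses the exponent of $(1-|z_k|)$ to produce $|g'(z_k)|^{\frac{pt}{p-t}}(1-|z_k|)^{\frac{pt}{p-t}+1}$, exactly the value at $z_k$ of a discretization of $|g'(z)|^{\frac{pt}{p-t}}(1-|z|^{2})^{\frac{pt}{p-t}-1}\,dA(z)$; condition (a) therefore becomes $\int_{\T}\big(\int_{\Gamma(\xi)}|g'|^{\frac{pt}{p-t}}(1-|z|^{2})^{\frac{pt}{p-t}-1}dA\big)^{\frac{(p-t)qs}{(q-s)pt}}|d\xi|<\infty$, i.e.\ $g\in RM\big(\tfrac{pt}{p-t},\tfrac{qs}{q-s}\big)$. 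Likewise condition (b), after replacing the lattice supremum over a cone by the essential supremum over a slightly wider cone, becomes $\int_{\T}\big(\esssup_{z\in\Gamma(\xi)}|g'(z)|(1-|z|)^{1+\frac1t-\frac1p}\big)^{\frac{qs}{q-s}}|d\xi|<\infty$, and condition (c) becomes $\sup_k|g'(z_k)|(1-|z_k|)^{1+\frac1t+\frac1s-\frac1p-\frac1q}<\infty$, i.e.\ $g\in\mathcal B^{\,1+\frac1t+\frac1s-\frac1p-\frac1q}$ after the standard passage from the lattice to a pointwise supremum.

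\emph{Step 3: case (d) and closing the equivalence.} The main obstacle is the borderline case $1\le q=s$, $1\le t<p$, where Theorem~\ref{main answer} delivers not a pointwise bound but a tent-type Carleson condition of the form $\sup_{I}\sum_{z_k\in T(I)}\big(\mu^{1/t}(D(z_k,r))(1-|z_k|)^{-1/p}\big)^{\frac{pt}{p-t}}(1-|z_k|)<\infty$. After the substitution of Step~2 this concerns $\sum_{z_k\in T(I)}|g'(z_k)|^{\frac{pt}{p-t}}(1-|z_k|)^{\frac{pt}{p-t}+2}$, and the task is to recognize it as the assertion that $d\mu(z)=|g'(z)|^{\frac{pt}{p-t}}(1-|z|)^{\frac{pt}{p-t}}\,dm(z)$ is a Carleson measure in the sense of \eqref{CMHardy}; this identification is carried out by discretizing $\mu$ over Carleson boxes and exploiting the geometry relating the tents $T(I)$ to the squares $S(I)$, together with the subharmonicity of $|g'|^{\frac{pt}{p-t}}$, to pass between the tent- and box-versions of the condition. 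Finally, each case is argued in both directions: the implication $(1)\Rightarrow(2)$ of Theorem~\ref{main answer} combined with the sub-mean-value \emph{lower} bound on $\mu(D(z_k,r))$ forces the stated condition on $g$, while conversely that condition yields (2), hence (1), hence---by Step~1---the boundedness of $T_g$, completing the chain of equivalences.
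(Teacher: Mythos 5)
Your proposal is correct and follows essentially the same route as the paper: reduce the boundedness of $T_g$ to the embedding of Theorem~\ref{theoremCarlesonTent} via the tent-space identification $RM(p,q)=AT_p^q(1)$ and the derivative characterization (so that $\|T_gf\|_{RM(t,s)}$ becomes the $T_t^s$-integral of $|f|^t$ against $d\mu=|g'|^t(1-|z|^2)^{t-1}\,dA$), and then translate the four discrete lattice conditions into the continuous conditions on $g'$ using the sub-mean-value property, finite overlap of the hyperbolic discs, and enlargement of cones and Carleson boxes. The only difference is one of detail: the paper carries out the discrete-to-continuous equivalence separately in both directions for each of the four cases (including the $\ell^1$--$\ell^{p/(p-t)}$ comparison over the $N$ overlapping discs), whereas you summarize this as a pointwise $\asymp$ valid ``for the sums and suprema,'' which is the same argument stated informally.
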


The $RM(p,q)$  are not the first example of spaces that can be treated as tent. The Littlewood-Paley theory 
provides an alternative way to define the Hardy spaces in terms of the Lusin area function \cite{Z}. An $f\in H^p$ if and only if 
\begin{equation}\label{Hardy2}
\int_{\mathbb T}\,\left(\int_{\Gamma(\xi)}\,|f'(z)|^2\, dA(z)\right)^{\frac p2}\, |d\xi|<+\infty \,.
\end{equation}
It has been proved that  (\ref{Hardy2}) not only is  comparable to  (\ref{Hardy}) but also it is convenient for the study of integration operator.  
Z. Wu, using (\ref{Hardy2}), studied the $T_g$ operator from a  Bergman space to a Hardy for the unit disc \cite{Wu_2011}.
Later, S. Miihkinen, J. Pau,  A. Perälä and M. Wang completed and extended the results of Wu for the case of the unit ball \cite{MiihkinenPauPeralaWang2020}.
Taking into account  the fact that each average radial integrability space is between two Bergman spaces is tempting to wonder
about the action of  $T_g$ from an $RM(p,q)$ to a $H^s$. We prove the following theorem.

\begin{theorem}
	Let $1\leq p,q,s<+\infty$. The following statements are equivalent:
	\begin{enumerate}
		\item The operator $T_g: RM(p,q)\rightarrow H^s$ is bounded.
				\item The function $g\in \mathcal{H}(\D)$ satisfies that
		\begin{enumerate}
			\item If $1\leq s<q<+\infty$, $2<p<+\infty$, 
			\begin{align*}
					|g'(z)|(1-|z|)^{\frac{1}{2}}\in T_{\frac{2p}{p-2}}^{\frac{qs}{q-s}}(1).
			\end{align*}
			\item If $1\leq s< q<+\infty$, $1\leq p\leq 2$, 
			\begin{align*}
	\int_{\mathbb T} \left(  \esssup_{z\in \Gamma(\xi)} |g'(z)|(1-|z|)^{1-\frac{1}{p}}  \right)^{\frac{qs}{q-s}} \,|d\xi|<+\infty\,.
						\end{align*}
			\item If $1\leq q< s<+\infty$, $1\leq p<\infty$ or $1\leq q=s<+\infty$, $1\leq p\leq 2$, $g\in\mathcal{B}^{1+\frac{1}{s}-\frac{1}{p}-\frac{1}{q}}$.
			\item If $1\leq q=s<+\infty$, $2< p<+\infty$, the measure $|g'(z)|^{\frac{2p}{p-t}}(1-|z|)^{\frac{p}{p-t}}\ dm(z)$ is a Carleson measure.
		\end{enumerate}
 \end{enumerate}	
\end{theorem}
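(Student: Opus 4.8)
The plan is to translate the boundedness of $T_g\colon RM(p,q)\to H^s$ into a single tent‑space embedding and then to invoke Theorem~\ref{main answer}, exactly as in the previous case. First I would pass to area functions. Since $(T_gf)(0)=0$ and $(T_gf)'=fg'$, the Littlewood--Paley/Lusin‑area description of $H^s$ (see \cite{Z}) gives
\begin{equation*}
\|T_gf\|_{H^s}^s\asymp\int_{\T}\left(\int_{\Gamma(\xi)}|f(z)|^{2}|g'(z)|^{2}\,dA(z)\right)^{s/2}|d\xi|.
\end{equation*}
On the source side, the tent representation of $RM(p,q)$ recalled above, together with the standard fact from the structure theory of Triebel--Lizorkin/tent spaces that inside a Lusin‑area integral the factor $|f(z)|$ may be replaced by $(1-|z|^{2})|f'(z)|$ up to the quantity $|f(0)|$ and a two‑sided constant (see \cite{Cohn_Verbitsky_200}; since point evaluation is bounded on $RM(p,q)$, that extra term is harmless), identifies $RM(p,q)$ with $AT_{p}^{q}(1)$ and yields $\|f\|_{RM(p,q)}\asymp\|f\|_{T_{p}^{q}(1)}$. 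Hence $T_g\colon RM(p,q)\to H^s$ is bounded if and only if there is a constant $C>0$ with
\begin{equation*}
\int_{\T}\left(\int_{\Gamma(\xi)}|f(w)|^{2}\,d\mu_g(w)\right)^{s/2}|d\xi|\le C\,\|f\|_{T_{p}^{q}(1)}^{s}\qquad(f\in AT_{p}^{q}(1)),
\end{equation*}
where $d\mu_g(z)=|g'(z)|^{2}\,dA(z)$; note that testing on $f\equiv1$ already records $T_g1=g-g(0)\in H^s$. This is precisely statement $(1)$ of Theorem~\ref{main answer} with parameters $(p,q,s,t,\alpha)=(p,q,s,2,1)$ and measure $\mu=\mu_g$.

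It then remains to apply Theorem~\ref{main answer} and unwind its condition $(2)$ for $\mu_g$. Fix an $(r,\kappa)$‑lattice $\{z_k\}$. Since $|g'|^{2}$ is subharmonic and $|D(z_k,r)|\asymp(1-|z_k|)^{2}$, one has $\mu_g(D(z_k,r))\asymp|g'(z_k)|^{2}(1-|z_k|)^{2}$, so that $\mu_g^{1/2}(D(z_k,r))\asymp|g'(z_k)|(1-|z_k|)$ and
\begin{equation*}
\frac{\mu_g^{1/2}(D(z_k,r))}{(1-|z_k|)^{1/p}}\asymp|g'(z_k)|(1-|z_k|)^{1-\frac1p}=\big(|g'(z_k)|(1-|z_k|)^{1/2}\big)(1-|z_k|)^{\frac12-\frac1p}.
\end{equation*}
Substituting $t=2$, $\alpha=1$ into the four alternatives of Theorem~\ref{main answer}$(2)$ and using the standard dictionary that converts sums of the form $\sum_{z_k}(\cdot)(1-|z_k|)$ over a lattice into area integrals, together with the usual passage from lattice suprema to essential suprema, one reads off the cases. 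For $1\le s<q$ and $2<p$ (so $t<p$), the inner sum over the cone is a constant multiple of the discretised $T^{qs/(q-s)}_{2p/(p-2)}(1)$‑quantity of the function $z\mapsto|g'(z)|(1-|z|)^{1/2}$, which is $(2a)$. For $1\le s<q$ and $1\le p\le2$ (so $p\le t$), the lattice supremum over the cone becomes $\esssup_{z\in\Gamma(\xi)}|g'(z)|(1-|z|)^{1-1/p}$, which is $(2b)$. For $1\le q<s$ with any $p$, or $1\le q=s$ with $1\le p\le2$, the global supremum becomes $\sup_{z\in\D}|g'(z)|(1-|z|)^{1+1/s-1/p-1/q}$, i.e.\ $g\in\mathcal B^{1+1/s-1/p-1/q}$, which is $(2c)$. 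For $1\le q=s$ and $2<p$, the supremum over Carleson boxes, after the lattice sum over the tents $T(I)$ is converted into an area integral, becomes the statement that $d\mu(z)=|g'(z)|^{2p/(p-2)}(1-|z|)^{p/(p-2)}\,dm(z)$ is a Carleson measure, which is $(2d)$.

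Almost all of the analytic work is carried by Theorem~\ref{main answer}; the two points that need care are the following. The opening reduction rests on the identification $RM(p,q)\cong AT_{p}^{q}(1)$: the excerpt only recorded the derivative form $\int_{\T}\big(\int_{\Gamma(\xi)}|f'|^{p}(1-|z|^{2})^{p-1}\,dA\big)^{q/p}|d\xi|$ of the tent representation, so one must first supply the comparison replacing $f'$ by $f$ in such a norm (the step that absorbs the weight $(1-|z|^{2})^{p-1}$ into $\alpha=1$) and handle the constant term. Second, the translation of Theorem~\ref{main answer}$(2)$ into $(2a)$--$(2d)$, though mechanical, requires precise bookkeeping of the weight exponents with $t=2$, $\alpha=1$, the two‑sided comparison of $\mu_g(D(z_k,r))$ with $|g'(z_k)|^{2}(1-|z_k|)^{2}$, and the passage between lattice and continuous suprema; I expect the genuinely delicate instance to be $(2d)$, where the lattice sum over the tents $T(I)$ has to be matched against the arc‑length‑normalised Carleson condition, the other three cases being essentially transcriptions.
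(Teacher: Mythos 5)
Your overall route is exactly the paper's: use the Lusin--area description of $H^s$ to write $\|T_gf\|_{H^s}^s\asymp\int_{\T}\bigl(\int_{\Gamma(\xi)}|f|^2|g'|^2\,dA\bigr)^{s/2}|d\xi|$, use Proposition~\ref{non_tangential_char_without_der} to identify $RM(p,q)$ with $AT_p^q(1)$ (this is proved there directly for $f$ itself, so the extra comparison you worry about in your last paragraph is already available), and then invoke Theorem~\ref{theoremCarlesonTent} with $t=2$, $\alpha=1$ and $d\mu_g=|g'|^2\,dA$, finally translating the discrete conditions into the stated function-theoretic ones. The paper's proof of Theorem~\ref{RMH} is precisely this reduction, with the discrete-to-continuous translation delegated to the computations carried out in the proof of Theorem~\ref{RMRM}.

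There is, however, one concretely false step in your translation: the claimed two-sided estimate $\mu_g(D(z_k,r))\asymp|g'(z_k)|^2(1-|z_k|)^2$. Subharmonicity of $|g'|^2$ gives only $|g'(z_k)|^2(1-|z_k|)^2\lesssim\mu_g(D(z_k,r))$; the reverse inequality fails pointwise (take $g'$ vanishing at $z_k$ but large elsewhere in $D(z_k,r)$), and your entire unwinding of conditions $(2a)$--$(2d)$ rests on it. The correct argument, which is what the paper actually does in the proof of Theorem~\ref{RMRM}, replaces $|g'(z_k)|$ by $\sup_{w\in\overline{D(z_k,r)}}|g'(w)|$, applies the sub-mean-value property (Lemma~\ref{MVP}) at the maximizing point over a slightly larger hyperbolic disc $D(z_k,4r)$, and then controls the resulting over-counting by the finite multiplicity of the enlarged discs (Proposition~\ref{Ncoverdiscs}) together with the cone-enlargement lemma of Wu and Lemma~\ref{estimate 1} (resp.\ Lemma~\ref{lemma_sup} for the supremum cases, and the comparison $|I|\asymp|I_+|$ of Carleson boxes for case $(d)$). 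The upshot is that the discrete and continuous quantities are comparable only after summing over the cone (or the tent, or taking the supremum), not disc by disc; with that substitution each of your four case-by-case identifications goes through and agrees with the paper.
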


Closing the section we present the structure of the paper. In Section 2 we present the preliminary information that will be required throughout our study. We gather the definition of the spaces of analytic functions of our interest and some properties of them that we will need. Moreover, in Section 3,  we give  an identification of the $RM(p,q)$ spaces as tent spaces $AT_{p}^{q}(1)$ (see Proposition~\ref{non_tangential_char_without_der}).

The main results of this article appear in Sections 4 and 5. In Section 4, first we provide a characterization of the positive Borel measures such that they satisfy \eqref{main problem} (see Theorem~\ref{theoremCarlesonTent}). This result will be the cornerstone in the course of the proof of the last section. Second, we focus on Theorem 3 of \cite{Luecking}. Although  we get  the impression that all the cases of the indices involved are covered  it appears that there is a specific case to be made clear. Following  similar  ideas to those employed by  Luecking we complete the picture.

In Section 5 we provide a characterization of when the integration operator $T_g: RM(p,q)\rightarrow RM(t,s)$ is bounded, where $1\leq p,q,t,s<+\infty$ (see Theorem~\ref{RMRM}). Also, we show the boundedness of $T_{g}$ for $t=\infty$, that is, when the operator $T_{g}$ maps $RM(p,q)$ into the Hardy spaces $H^s$ (see Theorem~\ref{RMH}).

Throughout the paper the letter $C = C(\cdot)$ will denote an absolute constant whose
value depends on the parameters indicated in the parenthesis, and may change from
one occurrence to another. We will use the notation $a\lesssim b$ if there exists a constant
$C = C(\cdot) > 0$ such that $a \leq Cb$, and $a\gtrsim b$ is understood in an analogous manner. In
particular, if $a \lesssim b$ and $a \gtrsim b$, then we will write $a \asymp b$.

\section{Definitions and first properties}
\addtocontents{toc}{\protect\setcounter{tocdepth}{1}}

In this section we recall the definitions of the spaces of analytic functions under discussion. Moreover, we compile some properties for the sake of being self-contained.

\subsection{Average radial integrability spaces}
These are the $RM(p,q)$ spaces introduced in \cite{Aguilar-Contreras-Piazza}.
\begin{definition}
	Let $0<p,q\leq +\infty$. We define the spaces of analytic functions
	$$RM(p,q)=\{f\in \mathcal{H}(\D)\ :\ \rho_{p,q}(f)<+\infty\},$$
	where
	\begin{equation*}
	\begin{split}
	\rho_ {p,q}(f)&=\left(\frac{1}{2\pi}\int_{0}^{2\pi} \left(\int_{0}^{1} |f(r e^{i t})|^p \ dr \right)^{q/p}dt\right)^{1/q}, \quad \text{ if } p,q<+\infty,\\
	\rho_ {p,\infty}(f)&=\esssup_{t\in[0,2\pi)}\left(\int_{0}^{1} |f(r e^{i t})|^p \ dr \right)^{1/p}, \quad \text{ if } p<+\infty, \\
	\rho_ {\infty,q}(f)&=\left(\frac{1}{2\pi}\int_{0}^{2\pi} \left(\sup_{r\in [0,1)} |f(r e^{i t})| \right)^{q}dt\right)^{1/q},\quad\text{ if } q<+\infty,\\
	\rho_{\infty,\infty}(f)&=\|f\|_{H^{\infty}}=\sup_{z\in\D}|f(z)|.
	\end{split}
	\end{equation*}
\end{definition}

\subsection{Tent spaces }
The work of Coifman, Meyer and Stein  is considered the starting point for the study of the tent spaces of measurable functions \cite{CMS}. 
Since then they have been studied widely by many authors.  A typical collection of references on the subject is the following  \cite{Luecking1987}, \cite{Luecking},   \cite{Jevtic_1996}, \cite{Arsenovic}, \cite{Cohn_Verbitsky_200}, \cite{Perala_2018}, \cite{MiihkinenPauPeralaWang2020}.

Let $\xi\in \T$. We define the non-tangential regions $\Gamma(\xi)$ and $\Lambda (\xi)$ as follows
\begin{align*}
\Gamma (\xi) & = \Gamma_M (\xi) :=\left\{z\in\D: |z-\xi|< M (1-|z|^2)  \right\},\\
\Lambda(\xi)&:=\left\{r\xi e^{i\theta}: |\theta|< (1-r)\right\},
\end{align*}
where $ M>\frac{1}{2}$ and $|\theta|:=\min\left\{|\theta+2k\pi|:k\in\Z \right\}$. 

\begin{definition}
	Let $0<p,q, \alpha<+\infty$. The tent space  $T_p^q(\alpha)$ consists of measurable functions $f$ on $\D$ such that 
	\begin{align}\label{TentA}
	\|f\|_{T_{p}^{q}(\alpha)}=\left(\int_{\T} \left(\int_{\Gamma(\xi)} |f(z)|^p \ \frac{dA(z)}{(1-|z|^2)^{2-\alpha}} \right)^{q/p}\ |d\xi|\right)^{1/q}<+\infty.
	\end{align}
	Analogously, the space $T_{\infty}^{q}(\alpha)$ consists of measurable functions $f$ on $\D$ with
	\begin{align}\label{TentB}
	\|f\|_{T_{\infty}^{q}(\alpha)}=\left(\int_{\T} \esssup_{z\in \Gamma(\xi)} |f(z)|^{q}\  |d\xi|\right)^{1/q},\quad\text{ if } q<+\infty,
	\end{align}
	where the essential supremum is taken with respect to the Lebesgue measure. Notice that the definition of $T_{\infty}^{q}(\alpha)=T_{\infty}^{q}$ is independent of $\alpha$.
	
	For the case $q=+\infty$ and $p<+\infty$, the space $T_{p}^{\infty}(\alpha)$ consists of measurable functions $f$ on $\D$ with
	\begin{align}\label{TentC}
	\|f\|_{T_p^\infty(\alpha)}=\sup_{\xi\in \T} \left(\sup_{w\in\Gamma(\xi)} \frac{1}{(1-|w|^2)} \int_{S(w)} |f(z)|^{p}\ \ \frac{dA(z)}{(1-|z|^2)^{1-\alpha}} \right)^{1/p}<+\infty,
	\end{align}
	where 
	$$S(re^{i\theta})=\left\{\rho e^{it}\ : 1-\rho\leq 1-r,\ |t-\theta|\leq \frac{1-r}{2}\right\}$$ 
	for $re^{i\theta}\in \D\setminus\{0\}$ and $S(0)=\D$. Notice that $f\in T_\infty^q(\alpha)$ if and only if the measure $d\mu_f(z)=|f(z)|^p (1-|z|^2)^{\alpha-1}\ dA(z)$ is a Carleson measure on $\D$.
	
	If we take holomorphic functions instead of measurable functions, we will denote the tent space of holomorphic functions as $AT_p^{q}(\alpha):=T_p^q(\alpha)\cap \mathcal{H}(\D)$.
\end{definition}

Below we state a technical lemma, well known to the experts of the area. It will be usefull for us too.
\begin{lemma}{\cite[Lemma 4, p. 66]{Arsenovic}}\label{estimate 1} 
	Let $0<p,q<+\infty$ and $\lambda>\max\{1,p/q\}$. Then, there are constants $C_{1}=C_{1}(p,q,\lambda,C)$ and $C_{2}=C_{2}(p,q,\lambda,C)$ such that
	\begin{align*}
	C_1\int_{\T} \mu(\Gamma (\xi))^{q/p} |d\xi|\leq\int_{\T}\left(\int_{\D} \left(\frac{1-|z|}{|1-z\overline{\xi}|}\right)^{\lambda} \ d\mu(z)\right)^{q/p}|d\xi|\leq C_{2} \int_{\T} \mu (\Gamma(\xi))^{q/p} |d\xi|,
	\end{align*}
	for every positive measure $\mu$ on $\D$. 
\end{lemma}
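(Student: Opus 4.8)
The plan is to prove the two estimates of Lemma~\ref{estimate 1} separately. The left one is elementary and holds for every $\lambda>0$: if $z\in\Gamma(\xi)=\Gamma_M(\xi)$ then $|1-\bar\xi z|<M(1-|z|^2)\le 2M(1-|z|)$, so $\bigl(\frac{1-|z|}{|1-\bar\xi z|}\bigr)^{\lambda}\ge(2M)^{-\lambda}$; restricting the integral in $z$ to $\Gamma(\xi)$ gives $\int_{\D}\bigl(\frac{1-|z|}{|1-\bar\xi z|}\bigr)^{\lambda}\,d\mu(z)\ge(2M)^{-\lambda}\mu(\Gamma(\xi))$, and raising to the power $q/p>0$ and integrating over $\xi\in\T$ yields the left inequality with $C_1=(2M)^{-\lambda q/p}$. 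The hypothesis $\lambda>\max\{1,p/q\}$ is needed only for the reverse estimate.

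For the reverse estimate I would localise in $\xi$ and decompose dyadically in the aperture. Fix $\xi$, put $\gamma:=q/p$, and write $\D=\Gamma_M(\xi)\cup\bigcup_{k\ge0}A_k(\xi)$ with $A_k(\xi)=\Gamma_{2^{k+1}M}(\xi)\setminus\Gamma_{2^{k}M}(\xi)$. On $A_k(\xi)$ one has $z\notin\Gamma_{2^{k}M}(\xi)$, i.e. $|1-\bar\xi z|\ge 2^{k}M(1-|z|^2)$, hence $\bigl(\frac{1-|z|}{|1-\bar\xi z|}\bigr)^{\lambda}\le(2^{k}M)^{-\lambda}$ (using $1-|z|\le 1-|z|^2$), and on $\Gamma_M(\xi)$ the integrand is $\le1$, so that
\begin{equation*}
\int_{\D}\Bigl(\frac{1-|z|}{|1-\bar\xi z|}\Bigr)^{\lambda}d\mu(z)\ \lesssim\ \sum_{k\ge0}2^{-k\lambda}\,\mu\bigl(\Gamma_{2^{k+1}M}(\xi)\bigr).
\end{equation*}
Now raise to the power $\gamma$ and integrate over $\T$: if $\gamma\ge1$ apply Minkowski's inequality $\|\sum_k f_k\|_{L^{\gamma}(\T)}\le\sum_k\|f_k\|_{L^{\gamma}(\T)}$, and if $0<\gamma\le1$ use $(\sum_k a_k)^{\gamma}\le\sum_k a_k^{\gamma}$. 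In either case the matter is reduced to estimating $\int_{\T}\mu(\Gamma_{N}(\xi))^{\gamma}\,|d\xi|$ for the wide apertures $N=2^{k+1}M$, and here I would invoke the change-of-aperture inequality
\begin{equation*}
\int_{\T}\mu(\Gamma_{N}(\xi))^{\gamma}\,|d\xi|\ \le\ C(M,\gamma)\,\frac{N}{M}\,\int_{\T}\mu(\Gamma_{M}(\xi))^{\gamma}\,|d\xi|,\qquad N\ge M>\tfrac12,\ \ 0<\gamma<\infty,
\end{equation*}
whose decisive feature is that the loss is only \emph{linear} in the aperture ratio $N/M$ (this rate is best possible, as one sees already for $\mu$ a single point mass). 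Feeding it in, the reverse estimate follows once $\sum_{k\ge0}2^{-k\lambda}(2^{k})^{1/\gamma}<\infty$ when $\gamma\ge1$ — which holds since then $1/\gamma\le1<\lambda$ — and $\sum_{k\ge0}2^{-k\lambda\gamma}2^{k}<\infty$ when $\gamma\le1$ — which holds iff $\lambda\gamma>1$, i.e. $\lambda>p/q$. Together these two requirements are exactly $\lambda>\max\{1,p/q\}$, so the hypothesis on $\lambda$ is precisely what this argument consumes.

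The one step that is not bookkeeping is the change-of-aperture inequality. That the functionals $\xi\mapsto\mu(\Gamma_{N}(\xi))$ for different apertures have comparable $L^{\gamma}(\T)$ (quasi-)norms for every $0<\gamma<\infty$, with loss only linear in $N/M$, is classical in the theory of tent spaces; the standard route is a good-$\lambda$ inequality of Coifman–Meyer–Stein type (\cite{CMS}), or equivalently the comparison of the tent quasi-norm with its discretisation along a lattice of pseudohyperbolic disks. This is the step I expect to be the genuine obstacle: for $q/p>1$ one can shortcut it via the Hardy–Littlewood maximal theorem (bound $\mu(\Gamma_{N}(\xi))$ by $\tfrac{N}{M}\,\mathcal M_{\T}\bigl[\mu(\Gamma_{CM}(\cdot))\bigr](\xi)$ after covering each shadow arc $\{\eta\in\T:z\in\Gamma_N(\eta)\}$ by $\asymp N/M$ shorter ones and regrouping), but for $q/p\le1$ the maximal operator is unbounded on $L^{q/p}$ and the stopping-time argument appears unavoidable. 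The remaining geometric point — that with $M>\tfrac12$ one may choose Whitney boxes thin enough that each lies inside $\Gamma_M(\xi)$ for all $\xi$ in a boundary arc comparable to its shadow — is routine. Lemma~\ref{estimate 1} itself is quoted from \cite{Arsenovic}; the above is how I would reprove it from scratch.
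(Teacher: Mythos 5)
The paper does not prove Lemma~\ref{estimate 1} at all --- it is imported verbatim from \cite{Arsenovic} --- so your reconstruction can only be judged on its own terms. Your lower bound is correct, and the architecture of the upper bound (dyadic decomposition in the aperture, a change-of-aperture inequality, then summing a geometric series whose convergence consumes $\lambda>\max\{1,p/q\}$) is the right strategy. The problem is that the change-of-aperture inequality you invoke is \emph{false} in the regime $\gamma=q/p>1$, and the error is not cosmetic.

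Take $\mu$ to be arclength measure on the circle $|z|=1-\delta$. Then $\mu(\Gamma_N(\xi))\asymp N\delta$ for \emph{every} $\xi\in\T$, so $\int_\T\mu(\Gamma_N(\xi))^\gamma\,|d\xi|\asymp (N\delta)^\gamma$ and the ratio between apertures $N$ and $M$ is $(N/M)^\gamma$, not $N/M$. The point mass only shows that the loss is \emph{at least} linear; for $\gamma>1$ this example forces the exponent $\gamma$. The correct statement is $\|\mu(\Gamma_N(\cdot))\|_{L^\gamma(\T)}\le C\,(N/M)^{\max(1,1/\gamma)}\,\|\mu(\Gamma_M(\cdot))\|_{L^\gamma(\T)}$. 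This matters for your proof: with your version, the Minkowski step for $\gamma\ge1$ only needs $\sum_k 2^{-k\lambda}2^{k/\gamma}<\infty$, i.e.\ $\lambda>p/q$, and the lemma is genuinely false for $p/q<\lambda<1$ --- for the same measure the inner integral is $\asymp\delta^\lambda$ when $\lambda<1$, so the middle quantity is $\asymp\delta^{\lambda\gamma}$, which is not $\lesssim\delta^\gamma\asymp\int_\T\mu(\Gamma(\xi))^\gamma|d\xi|$ as $\delta\to0$. Any argument that consumes only $\lambda>p/q$ when $q\ge p$ must therefore contain a false step, and yours is the aperture lemma. (The parenthetical Hardy--Littlewood shortcut has the same defect: $\mu(\Gamma_N(\xi))\lesssim\frac NM\,\mathcal{M}\bigl[\mu(\Gamma_{CM}(\cdot))\bigr](\xi)$ fails pointwise, e.g.\ for point masses at $z_j=(1-2^{-j})e^{iN2^{-j}}$.) The repair is mechanical: with the exponent $\max(1,1/\gamma)$ on the $L^\gamma$ norms, the dyadic series converges precisely when $\lambda>1$ for $q\ge p$ and $\lambda>p/q$ for $q<p$, which is exactly the stated hypothesis; but the good-$\lambda$ input must be quoted in its correct, $\gamma$-dependent form.
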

\begin{remark}\label{independetregion}
	By Lemma~\ref{estimate 1}, it is easy to see that  any of the non-tangential regions $\Gamma(\xi)$ in \eqref{TentA} can be replaced by
	 $\Lambda(\xi)$. 
\end{remark}

Moreover, for the case $p=+\infty$ one obtains by using \cite[Theorem 17.11(a), p. 340]{rudin_real_1987} that the definition of $T_{\infty}^{q}$ is independent of the nontangential region.
\begin{remark}\label{independentTinftyq}
	One can obtain an equivalent norm in $AT_{\infty}^{q}$ replacing in \eqref{TentB} the set $\Gamma(\xi)$ by any other Stolz region $\Gamma_{M}(\xi)$.
\end{remark}

\subsection{Tent spaces of sequences }\label{Section2.3} 
Let $\beta(z,w)$ be the hyperbolic metric on $\D$ and let $D(z,r)=\{w\in\D\ :\ \beta(z,w)<r\}$ be the hyperbolic disc of radius $r>0$ centered at $z\in\D$. The sequence $Z=\{z_k\}$ is a separated sequence if there is a constant $\delta>0$ such that $\beta(z_k,z_j)\geq \delta$ for $j\neq k$. The sequence $Z=\{z_k\}$ is said to be an $(r,\kappa)$-lattice (in the hyperbolic distance), for $r>\kappa>0$, if  
\begin{enumerate}
	\item $\D=\bigcup_{k} D(z_k,r)$,
	\item the sets $D(z_k,\kappa)$ are pairwise disjoint,
\end{enumerate}
Notice that any $(r,\kappa)$-lattice is a separated sequence. 

\begin{remark}\label{remarkestimatedistbound}	
	It is known that if $z,w\in\D$ such that $\beta(z,w)<r$, then there is a constant $C=C(r)>0$ such that
	$\frac{1}{C}(1-|z|)\leq 1-|w|\leq C(1-|z|)$. 
\end{remark}

Using arguments similar to those in \cite{duren_schuster_2004}, one can prove the following result. 

\begin{proposition}\label{Ncoverdiscs}Let $K\geq 1$, $R> 0$, and an $(r,\kappa)$-lattice $Z=\{z_k\}$. There is a positive integer $N=N(K,R,Z)$ such that for each point $z\in\D$ there are at most $N$ hyperbolic discs $D(z_k,Kr)$ satisfying $D(z,R)\cap D(z_{k},Kr)\neq\emptyset$.
\end{proposition}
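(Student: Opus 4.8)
The plan is to mimic the standard covering‐lemma argument for lattices in the Bergman metric, adapting the version found in \cite{duren_schuster_2004}. Fix $K\geq 1$, $R>0$ and the $(r,\kappa)$-lattice $Z=\{z_k\}$, and let $z\in\D$ be an arbitrary point. Suppose $D(z_{k},Kr)\cap D(z,R)\neq\emptyset$. Then by the triangle inequality for the hyperbolic metric $\beta$ we have $\beta(z_k,z)<Kr+R=:R'$, so every such $z_k$ lies in the hyperbolic ball $D(z,R')$. Hence it suffices to bound the number of indices $k$ with $z_k\in D(z,R')$ by a constant depending only on $R'$ (equivalently on $K,R$) and on the separation constant $\kappa$ of the lattice.

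First I would use the disjointness property (2) of the lattice: the discs $D(z_k,\kappa)$ are pairwise disjoint. If $z_k\in D(z,R')$ then $D(z_k,\kappa)\subseteq D(z,R'+\kappa)$ again by the triangle inequality. Therefore
\begin{align*}
\sum_{z_k\in D(z,R')} \int_{D(z_k,\kappa)} \frac{dA(w)}{(1-|w|^2)^2}\;\leq\; \int_{D(z,R'+\kappa)} \frac{dA(w)}{(1-|w|^2)^2},
\end{align*}
where I use the Möbius‐invariant measure $d\lambda(w)=(1-|w|^2)^{-2}\,dA(w)$, which assigns to every hyperbolic disc $D(\zeta,\rho)$ a value $c(\rho)$ depending only on the radius $\rho$ and not on the center $\zeta$ (this is the invariance of $\lambda$ under disc automorphisms). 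Consequently the number of indices $k$ with $z_k\in D(z,R')$ is at most $c(R'+\kappa)/c(\kappa)$, which is a finite constant $N=N(K,R,Z)$ as claimed. Combining with the reduction of the first paragraph gives the statement. Alternatively, one can avoid the measure computation entirely: Remark~\ref{remarkestimatedistbound} gives $1-|z_k|\asymp 1-|z|$ with constants depending on $R'$ for all the relevant $z_k$, so all these lattice points live in a fixed Euclidean annulus/box of controlled size, on which the separation $\beta(z_k,z_j)\geq\kappa$ translates (via the standard comparison between hyperbolic and Euclidean distance on such a box) into a uniform Euclidean separation, and a volume‐packing argument in the plane bounds their number.

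The only genuinely delicate point is making the claim "$\lambda(D(\zeta,\rho))$ depends only on $\rho$" (or the equivalent Euclidean‐packing estimate) precise and checking the triangle‐inequality bookkeeping so that all radii that appear, namely $Kr+R$ and $Kr+R+\kappa$, are finite and independent of $z$. Both ingredients are entirely standard, which is why I expect the write‐up to be short; the main obstacle is merely being careful that the constant $N$ genuinely does not depend on the point $z$, which the Möbius invariance of $\lambda$ (or the uniform comparison of metrics on a fixed hyperbolic ball) guarantees.
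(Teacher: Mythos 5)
Your argument is correct and is exactly the standard covering argument the paper alludes to when it says the proposition follows ``using arguments similar to those in'' Duren--Schuster: reduce via the triangle inequality to counting lattice points in $D(z,Kr+R)$, then use the pairwise disjointness of the $D(z_k,\kappa)$ together with the M\"obius invariance of $d\lambda(w)=(1-|w|^2)^{-2}\,dA(w)$ to get the bound $N\leq \lambda\bigl(D(0,Kr+R+\kappa)\bigr)/\lambda\bigl(D(0,\kappa)\bigr)$. The paper itself omits the proof, so there is nothing to compare beyond noting that your write-up supplies precisely the intended argument.
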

%

As a consequence of the above proposition we obtain the following covering lemma which will be useful in Section 4. 

\begin{lemma}\label{lemma_N_luecking}
	Let $C>1$ and an $(r,\kappa)$-lattice $Z=\{z_k\}$. There is $N$ such that for all $n\in \N$ and $\xi\in\T$ there are $N$ hyperbolic discs $D(z_k,r)$ that cover the set
	$$S_{C}(\xi)\cap \left\{z\in\D\ :\ \frac{1}{2^{n+1}}\leq 1-|z|\leq \frac{1}{2^n}\right\}.$$
\end{lemma}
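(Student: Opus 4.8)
The plan is to reduce the covering statement to the bounded-overlap estimate of Proposition~\ref{Ncoverdiscs}, which bounds how many of the enlarged discs $D(z_k,Kr)$ can meet a fixed hyperbolic disc of radius $R$. First I would fix the dyadic annulus $A_n=\{z\in\D:2^{-(n+1)}\le 1-|z|\le 2^{-n}\}$ and the truncated Carleson-type region $S_C(\xi)$, and observe that the portion $S_C(\xi)\cap A_n$ that we must cover has, in the hyperbolic metric, a diameter bounded by a constant \emph{independent of $n$ and $\xi$}: this is exactly the scale-invariance of the hyperbolic metric, together with the fact that $A_n$ has fixed hyperbolic thickness (about $\log 2$) and $S_C(\xi)$ cuts out an angular sector whose Euclidean width on $A_n$ is comparable to $2^{-n}$, hence of bounded hyperbolic width. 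So $S_C(\xi)\cap A_n$ sits inside a single hyperbolic disc $D(w_{n,\xi},R_0)$ with $R_0=R_0(C)$ fixed.

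Next, since $Z=\{z_k\}$ is an $(r,\kappa)$-lattice, the discs $D(z_k,r)$ already cover $\D$ by property~(1) of the lattice definition; in particular they cover $D(w_{n,\xi},R_0)\supseteq S_C(\xi)\cap A_n$. The only issue is to bound the \emph{number} of discs $D(z_k,r)$ that are actually needed, i.e.\ that meet $D(w_{n,\xi},R_0)$. Here I would invoke Proposition~\ref{Ncoverdiscs} with $K=1$ and $R=R_0$: it yields an integer $N=N(1,R_0,Z)$ such that at most $N$ of the discs $D(z_k,r)$ meet any fixed hyperbolic disc of radius $R_0$ — strictly, the proposition is stated pointwise, so I would apply it at the center $w_{n,\xi}$ (or, to be safe, cover $D(w_{n,\xi},R_0)$ by boundedly many discs of a smaller fixed radius and apply the proposition at each of their centers, multiplying $N$ by that bounded number). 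Since the separation constant $\kappa$ forces the discs $D(z_k,\kappa)$ to be disjoint, a volume/comparison argument (as in \cite{duren_schuster_2004}) is what underlies this finiteness, and it is uniform in $n$ and $\xi$ because everything has been reduced to a fixed hyperbolic scale.

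Finally I would collect the at most $N$ indices $k$ with $D(z_k,r)\cap (S_C(\xi)\cap A_n)\neq\emptyset$; by the covering property these discs cover $S_C(\xi)\cap A_n$, and $N$ depends only on $C$ and the lattice $Z$, not on $n$ or $\xi$. The main obstacle — really the only nontrivial point — is the uniform-scale reduction in the first paragraph: one must check carefully that the truncation to the single dyadic annulus $A_n$ makes the hyperbolic diameter of $S_C(\xi)\cap A_n$ bounded independently of $n$, since the untruncated region $S_C(\xi)$ has infinite hyperbolic diameter. Once that geometric normalization is in place, the rest is a direct citation of Proposition~\ref{Ncoverdiscs}.
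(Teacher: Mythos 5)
Your proof is correct and follows exactly the route the paper intends: the lemma is stated there as a direct consequence of Proposition~\ref{Ncoverdiscs} with the details omitted, and your argument (a uniform bound $R_0=R_0(C)$ on the hyperbolic diameter of $S_C(\xi)\cap A_n$, followed by the covering property of the lattice and the bounded-overlap count) is the natural way to fill them in. The only remark is that your ``to be safe'' detour is unnecessary, since Proposition~\ref{Ncoverdiscs} applied at the single point $w_{n,\xi}$ with $K=1$ and $R=R_0$ already bounds the number of discs $D(z_k,r)$ meeting $D(w_{n,\xi},R_0)\supseteq S_C(\xi)\cap A_n$.
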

%

In order to facilitate the reading, we have included the following lemma due to Wu.

\begin{lemma}\cite[Lemma 2.3, p. 992]{Wu_2006}\label{Wu} Let $M>1$, $r\geq 0$ and $\xi\in\T$. If $M_\ast=(M+1)e^{2r}-1$, then
$$D(z,r)\subset S_{M_\ast}(\xi)$$
for all $z\in S_{M}(\xi)$.
\end{lemma}

 A standard way to get an $(r,\kappa)$-lattice is through the Whitney Decomposition of the unit disc. This is  the $(r,\kappa)$-lattice described in the following example and the one that Luecking makes use of in \cite{Luecking}.   \newline

\begin{example}\label{LueckingCenters}
Let $Z=\{z_{i,j}\}$ be the sequence formed by the centers of the regions 
$$R_{i,j}=\left\{z\in\D\ :\ 1-\frac{1}{2^{i}}\leq |z|<1-\frac{1}{2^{i+1}}\  ,\arg(z)\in\left[\frac{2\pi j}{2^{i}},\frac{2\pi (j+1)}{2^{i}}\right)\right\}$$
for $i\in\N\cup\{0\}$ and $j=0,1,\dots, 2^{i}-1$.  We will refer to them as Luecking regions.
There exist $r>\kappa>0$ such that $Z=\{z_{i,j}\}$ is an $(r,\kappa)$-lattice.

\end{example}

Our approach  in order to get our results is mainly based on the discrete version of the tent spaces. These are the tent spaces of sequences.
\begin{definition} Let $Z=\{z_{n}\}$ be a $(r,\kappa)$-lattice and $0<p,q<+\infty$.
	We say that $\{\lambda_n\}\in T_{p}^{q}(Z)$ if
	\begin{align}\label{TentsequenceA}
	\|\{\lambda_n\}\|_{T_{p}^{q}(Z)}:=\left(\int_{\T} \left(\sum_{z_n\in \Gamma(\xi)} |\lambda_n|^{p}\right)^{p/q}\ |d\xi|\right)^{1/q}<+\infty,
	\end{align}
	the sequence $\{\lambda_n\}_{n}\in T_{\infty}^{q}(Z)$ if
	\begin{align}\label{TentsequenceB}
	\|\{\lambda_n\}\|_{T_{\infty}^{q}(Z)}:=\left(\int_{\T} \left(\sup_{z_n\in \Gamma(\xi)} |\lambda_n|^{p}\right)^{p/q}\ |d\xi|\right)^{1/q}<+\infty,
	\end{align}
	and $\{\lambda_n\}\in T_{p}^{\infty}(Z)$ if
	\begin{align}\label{TentsequenceC}
	\|\{\lambda_n\}\|_{T_{p}^{\infty}(Z)}=\sup_{\xi\in \T} \left(\sup_{u\in \Gamma(\xi)} \frac{1}{(1-|u|^2)}\sum_{z_{n}\in S(u)} |\lambda_n|^{p}(1-|z_n|^2)\right)^{1/p}<+\infty.
	\end{align}
	where 
	$$S(re^{i\theta})=\left\{\rho e^{it}\ : 1-\rho\leq 1-r,\ |t-\theta|\leq \frac{1-r}{2}\right\}$$ 
	 for $re^{i\theta}\in \D\setminus\{0\}$ and $S(0)=\D$.
\end{definition}
\begin{remark}\label{independetregionsequence}
	Lemma~\ref{estimate 1}  justifies the independence of the definition  \eqref{TentsequenceA} for any $\Gamma_M (\xi)$.  
\end{remark}

The following lemma is a well-know result in the theory of tent spaces of sequences and it will be useful in order to obtain equivalent expressions when we consider different non-tangential regions.
\begin{lemma}\label{lemma_sup} Let $M_1>M>1/2$. There is a constant $C>0$ such that
	\begin{align*}
	\int_{\T} \left(\sup_{z_k\in S_{M_1}(\xi)} |\lambda_{k}|^{q}\right)\ |d\xi|\leq C \int_{\T} \left(\sup_{z_k\in S_{M}(\xi)} |\lambda_{k}|^{q}\right)\ |d\xi|
	\end{align*}
	for all $0<q<+\infty$, and for all sequences $\{z_k\}\subset \D$ and $\{\lambda_{k}\}\subset \C$.
\end{lemma}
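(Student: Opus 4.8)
## Proof proposal for Lemma~\ref{lemma_sup}

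The plan is to exploit a geometric comparison between the cones $S_{M_1}(\xi)$ and $S_M(\xi)$ at the level of the \emph{boundary shadow} they cast on $\T$, reducing the inequality to a covering argument. First I would observe that since $\{z_k\}$ is a fixed sequence of points in $\D$ (independent of $\xi$), the quantity $\sup_{z_k\in S_{M_1}(\xi)}|\lambda_k|^q$, as a function of $\xi\in\T$, equals $\sup_k |\lambda_k|^q\,\mathds{1}_{\{\xi\,:\,z_k\in S_{M_1}(\xi)\}}(\xi)$. For a fixed point $z\in\D$, the set $I_{M}(z):=\{\xi\in\T : z\in S_M(\xi)\}=\{\xi\in\T : |z-\xi|<M(1-|z|^2)\}$ is an arc on $\T$ whose length is comparable to $(1-|z|)$, with the comparison constant depending on $M$; enlarging the aperture from $M$ to $M_1>M$ only dilates this arc by a bounded factor. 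Concretely, there is a constant $A=A(M,M_1)\geq 1$ such that $I_{M_1}(z)\subseteq A I_M(z)$, where $A I_M(z)$ denotes the arc with the same center as $I_M(z)$ and $A$ times the length (this is immediate from the fact that both arcs are centered at $z/|z|$ and have lengths comparable to $1-|z|$ with constants depending only on the respective apertures).

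Next I would reduce the integral inequality to these shadow arcs. We have
\begin{align*}
\int_{\T}\Big(\sup_{z_k\in S_{M_1}(\xi)}|\lambda_k|^q\Big)\,|d\xi|
=\int_{\T}\sup_k\big(|\lambda_k|^q\,\mathds{1}_{I_{M_1}(z_k)}(\xi)\big)\,|d\xi|
\le \int_{\T}\sup_k\big(|\lambda_k|^q\,\mathds{1}_{AI_{M}(z_k)}(\xi)\big)\,|d\xi|.
\end{align*}
So it suffices to prove that for any arcs $J_k$ on $\T$ and their dilates $AJ_k$,
\begin{align*}
\int_{\T}\sup_k\big(|\lambda_k|^q\,\mathds{1}_{AJ_k}(\xi)\big)\,|d\xi|\le C(A)\int_{\T}\sup_k\big(|\lambda_k|^q\,\mathds{1}_{J_k}(\xi)\big)\,|d\xi|,
\end{align*}
where the $J_k$ arise from a separated sequence. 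The right-hand side is $\int_\T h$, where $h(\xi)=\sup_{k:\xi\in J_k}|\lambda_k|^q$ is the pointwise supremum of the "layers" $|\lambda_k|^q\mathds{1}_{J_k}$. The standard way to handle a pointwise supremum against dilated arcs is to invoke the Hardy--Littlewood maximal function on $\T$: if $G(\xi)=\sup_k\big(|\lambda_k|^q\,\mathds{1}_{AJ_k}(\xi)\big)$ and we write $g(\xi)=\sup_k\big(|\lambda_k|^q\,\mathds{1}_{J_k}(\xi)\big)$, then whenever $\xi\in AJ_k$ the arc $J_k$ satisfies $|J_k\cap A J_k|/|AJ_k|\gtrsim A^{-1}$ and $J_k$ has length comparable to $|AJ_k|$, so the average of $g$ over $AJ_k$ is $\gtrsim A^{-1}|\lambda_k|^q$; hence $G(\xi)\lesssim A\,(Mg)(\xi)$, where $M$ is the uncentered Hardy--Littlewood maximal operator on $\T$. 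Therefore $\int_\T G\lesssim A\int_\T Mg$.

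The final ingredient — and the step I expect to carry the real content — is that we cannot conclude directly from the weak-$(1,1)$ bound for $M$ (which does not give an $L^1\to L^1$ estimate). Here I would use the separation of the lattice $\{z_k\}$: because the points are separated in the hyperbolic metric, the arcs $J_k$ of a given dyadic generation (those $z_k$ with $1-|z_k|\asymp 2^{-n}$) have \emph{bounded overlap}, with an overlap constant independent of $n$ and $\xi$ — this is exactly the kind of statement packaged in Proposition~\ref{Ncoverdiscs} and Lemma~\ref{lemma_N_luecking}. Combined with the fact that $g$ is the \emph{supremum} (not the sum) of the layers, this lets one replace $g$ by a function which is, up to constants, a sum over essentially disjoint pieces, so that $\int_\T Mg\lesssim \int_\T g$ after all; alternatively, and perhaps more cleanly, one argues directly that the superlevel set $\{G>\lambda\}$ is covered by the dilated arcs $AJ_k$ with $|\lambda_k|^q>\lambda$, whose union has measure at most a constant times the union of the $J_k$ with $|\lambda_k|^q>\lambda$ (using that dilating a family of arcs by a fixed factor increases the measure of the union by at most a dimensional constant, a Vitali-type fact on $\T$), i.e. $|\{G>\lambda\}|\le C(A)|\{g>\lambda\}|$, and then integrate in $\lambda$ via the layer-cake formula. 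The main obstacle is thus the passage from a pointwise-maximal-function bound to an honest $L^1$ bound; resolving it cleanly is precisely what the Vitali covering estimate "$|\bigcup_k AJ_k|\le C(A)|\bigcup_k J_k|$" on $\T$ accomplishes, and that is where I would concentrate the technical effort.
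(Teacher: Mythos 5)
The paper states this lemma without proof, citing it as well known, so there is no in-paper argument to compare against; judged on its own terms, your proposal is correct, and the step that actually closes it is the \emph{second} branch of your final paragraph. The reduction to shadow arcs is right: writing $I_M(z)=\{\xi\in\T: z\in S_M(\xi)\}$, one has $\sup_{z_k\in S_{M_1}(\xi)}|\lambda_k|^q=\sup_k\bigl(|\lambda_k|^q\chi_{I_{M_1}(z_k)}(\xi)\bigr)$, and $I_{M_1}(z)$ sits inside a concentric dilate $AI_M(z)$. From there the superlevel-set identity $\{\sup_k a_k\chi_{AJ_k}>\lambda\}=\bigcup_{a_k>\lambda}AJ_k$, the Vitali-type bound $\bigl|\bigcup_{k\in F}AJ_k\bigr|\le C(A)\bigl|\bigcup_{k\in F}J_k\bigr|$ (immediate either from the weak $(1,1)$ inequality for the uncentred maximal function applied to $\chi_E$ with $E=\bigcup_{k\in F}J_k$, or from the $5r$-covering lemma, since $J_k\subset AJ_k$ forces the $J_k$ corresponding to a disjoint subfamily of dilates to be disjoint) and the layer-cake formula finish the proof. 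This route is complete and, importantly, uses no separation of the $z_k$ --- which is just as well, since the lemma is asserted for \emph{arbitrary} sequences, so the first branch of your last paragraph (bounded overlap per dyadic generation of a lattice) is neither available nor needed; you are also right that the pointwise bound by $A\,Mg$ alone cannot be integrated in $L^1$. I would simply delete the hedging and state the covering argument as the proof.

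One caveat, which concerns the statement rather than your argument: the inclusion $I_{M_1}(z)\subseteq AI_M(z)$ with $A=A(M,M_1)$ is uniform only for $z$ outside a compact subset of $\D$. For $\tfrac12<M\le 1$ the arc $I_M(z)$ is empty when $M(1+|z|)\le 1$ and degenerates as $|z|$ decreases to $\tfrac1M-1$, while $I_{M_1}(z)$ need not be empty there; a single $z_k$ placed at such a point makes the inequality fail as literally stated. This is the usual, silently ignored defect of aperture-independence lemmas (in the paper's applications only points near $\T$ matter, where the dilation constant is uniform), but your write-up should record that the constant $A$ is uniform only for $|z|\ge r_0(M,M_1)$.
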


The next version of the sub-mean value property will be useful several times. As usual we write $f^{(0)}=f$.

\begin{lemma}\cite[p. 338 ]{Luecking}(See also \cite[Corollary 1, p. 68]{duren_schuster_2004} for the case $n=0$)\label{MVP} If $0<p<+\infty$, $r>0$, $n\in\N\cap \{0\}$, and $f$ is analytic in $\D$, then 
	$$|f^{(n)}(\alpha)|^{p}\lesssim \frac{1}{(1-|\alpha|)^{2+np}}\int_{D(\alpha,r)} |f(w)|^{p}\ dm(w),$$
for each point $\alpha\in \D$.	
\end{lemma}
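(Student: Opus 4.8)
The plan is to reduce the statement to two classical ingredients: the subharmonicity of $|f|^{p}$ for analytic $f$ and every $p>0$, together with a Cauchy estimate for the derivatives, after replacing the hyperbolic disc $D(\alpha,r)$ by a comparable Euclidean disc centred at $\alpha$. Write $B(\alpha,\delta)=\{z\in\C:|z-\alpha|<\delta\}$ for the Euclidean disc. The geometric fact I would record first is that $D(\alpha,r)$ is itself a Euclidean disc whose Euclidean radius is comparable to $1-|\alpha|$, with constants depending only on $r$; concretely, $D(\alpha,r)$ coincides with the pseudohyperbolic disc of some radius $s=s(r)\in(0,1)$, which is the Euclidean disc of radius $s(1-|\alpha|^{2})/(1-s^{2}|\alpha|^{2})\asymp 1-|\alpha|$ containing $\alpha$ at Euclidean distance $\gtrsim 1-|\alpha|$ from its boundary (a quantitative refinement of Remark~\ref{remarkestimatedistbound}). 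Consequently there is $\delta=\delta(\alpha,r)$ with $\delta\asymp 1-|\alpha|$ such that the closed disc $\overline{B(\alpha,2\delta)}$ is contained in $D(\alpha,r)$.

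The case $n=0$ is then immediate: since $|f|^{p}$ is subharmonic on $\D$, the sub-mean value inequality over $B(\alpha,2\delta)\subseteq D(\alpha,r)$ gives
\[
|f(\alpha)|^{p}\le \frac{1}{4\pi\delta^{2}}\int_{B(\alpha,2\delta)}|f|^{p}\,dm\le \frac{1}{4\pi\delta^{2}}\int_{D(\alpha,r)}|f|^{p}\,dm\lesssim \frac{1}{(1-|\alpha|)^{2}}\int_{D(\alpha,r)}|f|^{p}\,dm,
\]
where the last step uses $\delta\asymp 1-|\alpha|$.

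For $n\ge 1$ I would combine a Cauchy estimate with this sub-mean value bound, routing the argument through the pointwise supremum of $|f|$ so that it is uniform in $p$. By the Cauchy integral formula for derivatives on the circle $|z-\alpha|=\delta$ one has $|f^{(n)}(\alpha)|\le n!\,\delta^{-n}\sup_{|z-\alpha|\le\delta}|f(z)|$. For each $z\in\overline{B(\alpha,\delta)}$ the triangle inequality gives $B(z,\delta)\subseteq B(\alpha,2\delta)\subseteq D(\alpha,r)$, so subharmonicity of $|f|^{p}$ yields $|f(z)|^{p}\le (\pi\delta^{2})^{-1}\int_{B(z,\delta)}|f|^{p}\,dm\le(\pi\delta^{2})^{-1}\int_{D(\alpha,r)}|f|^{p}\,dm$. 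Taking the supremum over such $z$ and raising the Cauchy estimate to the $p$-th power,
\[
|f^{(n)}(\alpha)|^{p}\le \frac{(n!)^{p}}{\delta^{np}}\sup_{|z-\alpha|\le\delta}|f(z)|^{p}\le \frac{(n!)^{p}}{\pi\,\delta^{np+2}}\int_{D(\alpha,r)}|f|^{p}\,dm .
\]
Since $\delta\asymp 1-|\alpha|$ gives $\delta^{np+2}\asymp(1-|\alpha|)^{2+np}$, this is exactly the claimed inequality, with an implied constant depending only on $n,p,r$.

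The result is classical, so there is no serious obstacle; the only points demanding care are the geometric comparison in the first step — fitting a Euclidean disc of radius $\asymp 1-|\alpha|$, centred at $\alpha$, inside the hyperbolic disc uniformly in $\alpha$ — and verifying that every constant depends only on $n,p,r$ and not on $\alpha$. Passing the derivative bound through $\sup_{|z-\alpha|\le\delta}|f|$ rather than through an $L^{1}$-average followed by Jensen's inequality is precisely what keeps the argument valid simultaneously for $p\ge 1$ and for $0<p<1$.
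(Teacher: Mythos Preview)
Your argument is correct and complete. The paper itself does not prove this lemma: it is stated with a citation to \cite[p.~338]{Luecking} and to \cite[Corollary~1, p.~68]{duren_schuster_2004} for the case $n=0$, and no proof is given. Your proof is the standard one --- fit a Euclidean disc $B(\alpha,2\delta)$ with $\delta\asymp 1-|\alpha|$ inside the hyperbolic disc $D(\alpha,r)$, invoke subharmonicity of $|f|^{p}$ for the case $n=0$, and for $n\ge 1$ combine the Cauchy estimate on $|z-\alpha|=\delta$ with a uniform sub-mean value bound on the inner disc $B(z,\delta)\subset B(\alpha,2\delta)$. Routing the $n\ge 1$ estimate through the pointwise supremum rather than an $L^{1}$-average is exactly what makes the argument work uniformly for all $0<p<\infty$, and your tracking of the dependence of constants (only on $n,p,r$) is accurate. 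There is nothing to compare against in the paper beyond the references, and your write-up matches the classical proofs found there.
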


The relationship between the discrete version of tent spaces and the continuous
version is given in the next proposition.

\begin{proposition}\label{sequence and tent}
Let $0<p,q<+\infty$ and $Z=\{z_k\}$ be an $(r,\kappa)$-lattice. Given $f\in AT_{p}^{q}(\alpha)$ and $\lambda_{k}=\lambda_{k}(f)=\sup_{w\in\overline{D(z_k,r)}}|f(w)|(1-|z_k|)^{\alpha/p}$. Then
$$\|f\|_{T_{p}^{q}(\alpha)}\asymp \| \lambda_{k}\|_{T_{p}^{q}(Z)}.$$
\end{proposition}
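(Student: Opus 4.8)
The plan is to prove the two inequalities $\|f\|_{T_p^q(\alpha)}\lesssim \|\lambda_k\|_{T_p^q(Z)}$ and $\|\lambda_k\|_{T_p^q(Z)}\lesssim \|f\|_{T_p^q(\alpha)}$ separately, using the sub-mean value property (Lemma~\ref{MVP}), Remark~\ref{remarkestimatedistbound}, and the finite-overlap property of the lattice (Proposition~\ref{Ncoverdiscs}), and throughout I would freely switch between the approach regions $\Gamma(\xi)$ and $\Lambda(\xi)$ via Remark~\ref{independetregion} and Remark~\ref{independetregionsequence} whenever one is more convenient. The key bookkeeping point is to relate the pointwise integral $\int_{\Gamma(\xi)}|f(z)|^p\,(1-|z|^2)^{\alpha-2}\,dA(z)$ to the sum $\sum_{z_k\in\Gamma(\xi)}|\lambda_k|^p = \sum_{z_k\in\Gamma(\xi)}\sup_{\overline{D(z_k,r)}}|f|^p\,(1-|z_k|)^{\alpha}$; this is done disc-by-disc, so the enlarged discs $D(z_k, Kr)$ (with $K$ chosen so that $z\in\Gamma(\xi)$ and $\beta(z,z_k)<r$ forces $z_k$ in a slightly larger cone, and conversely) will be the natural objects.

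For the lower bound $\|f\|_{T_p^q(\alpha)}\lesssim \|\lambda_k\|_{T_p^q(Z)}$: since $\D = \bigcup_k D(z_k,r)$, for fixed $\xi$ I would cover $\Gamma(\xi)$ by those discs $D(z_k,r)$ meeting $\Gamma(\xi)$, and on each such disc estimate $\int_{D(z_k,r)\cap\Gamma(\xi)} |f(z)|^p (1-|z|^2)^{\alpha-2}\,dA(z) \lesssim \sup_{\overline{D(z_k,r)}}|f|^p \cdot (1-|z_k|)^{\alpha-2}\cdot A(D(z_k,r)) \asymp |\lambda_k|^p$, using Remark~\ref{remarkestimatedistbound} to replace $1-|z|$ by $1-|z_k|$ and the fact that the area of a hyperbolic disc of fixed radius at $z_k$ is $\asymp (1-|z_k|^2)^2$. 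Summing over $k$ and noting that $z\in\Gamma(\xi)\cap D(z_k,r)$ implies $z_k$ lies in a dilated cone $\Gamma_{M'}(\xi)$, one gets $\int_{\Gamma(\xi)}|f(z)|^p(1-|z|^2)^{\alpha-2}\,dA(z)\lesssim \sum_{z_k\in\Gamma_{M'}(\xi)}|\lambda_k|^p$; raising to the power $q/p$, integrating in $\xi$, and invoking Remark~\ref{independetregionsequence} to pass back from $\Gamma_{M'}$ to $\Gamma$ finishes this direction.

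For the upper bound $\|\lambda_k\|_{T_p^q(Z)}\lesssim \|f\|_{T_p^q(\alpha)}$: by Lemma~\ref{MVP} applied with $n=0$ at a point $w\in\overline{D(z_k,r)}$ where $|f(w)|$ is (nearly) maximal, $\sup_{\overline{D(z_k,r)}}|f|^p \lesssim (1-|w|)^{-2}\int_{D(w,r')}|f|^p\,dm \asymp (1-|z_k|)^{-2}\int_{D(z_k,Kr)}|f|^p\,dm$ for a suitable $K$, hence $|\lambda_k|^p \lesssim (1-|z_k|)^{\alpha-2}\int_{D(z_k,Kr)}|f(z)|^p\,dA(z)\lesssim \int_{D(z_k,Kr)}|f(z)|^p(1-|z|^2)^{\alpha-2}\,dA(z)$. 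Now for fixed $\xi$, if $z_k\in\Gamma(\xi)$ then $D(z_k,Kr)\subset\Gamma_{M''}(\xi)$ for a larger aperture $M''$; summing the displayed bound over $z_k\in\Gamma(\xi)$ and using Proposition~\ref{Ncoverdiscs} (the enlarged discs $D(z_k,Kr)$ have bounded overlap) gives $\sum_{z_k\in\Gamma(\xi)}|\lambda_k|^p \lesssim \int_{\Gamma_{M''}(\xi)}|f(z)|^p(1-|z|^2)^{\alpha-2}\,dA(z)$; then raise to the power $q/p$, integrate over $\T$, and use Remark~\ref{independetregion} to revert from $\Gamma_{M''}$ to $\Gamma$ in the tent norm. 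The main obstacle — and the place where care is genuinely needed — is the geometry of the cone dilations: one must verify that ``$z\in\Gamma_M(\xi)$ and $\beta(z,w)<r$'' implies ``$w\in\Gamma_{M'}(\xi)$'' for an $M'=M'(M,r)$ (and the converse inclusion), which follows from the fact that the hyperbolic ball of fixed radius around $z$ is contained in a fixed Euclidean ball of radius comparable to $(1-|z|)$ centered at $z$; this, combined with the bounded-overlap count of Proposition~\ref{Ncoverdiscs}, is what makes the sums and integrals comparable uniformly in $\xi$.
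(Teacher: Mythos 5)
Your proposal is correct and follows essentially the same route as the paper: the direction $\|f\|_{T_p^q(\alpha)}\lesssim\|\{\lambda_k\}\|_{T_p^q(Z)}$ via covering $\Gamma(\xi)$ by the lattice discs and using $A(D(z_k,r))\asymp(1-|z_k|)^2$, and the converse via the sub-mean value property (Lemma~\ref{MVP}) on a disc around the maximizing point, in both cases passing to an enlarged aperture $\Gamma_{M_+}(\xi)$ (the paper cites Wu's Lemma~2.3 for the cone-dilation inclusion you flag as the delicate point), invoking the bounded overlap from Proposition~\ref{Ncoverdiscs}, and concluding with the aperture-independence of the tent norms. No gaps.
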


\begin{proof}
Take $\tilde{z}_k\in \overline{D(z_k,r)}$ such that $|f(\tilde{z}_k)|=\sup_{w\in\overline{D(z_k,r)}} |f(w)|$.
First, let us check that $\| \{|f(\tilde{z}_k)|(1-|z_k|)^{\alpha/p}\}\|_{T_{p}^{q}(Z)} \lesssim \|f\|_{T_{p}^{q}(\alpha)}$.
Applying the mean value property over the hyperbolic disc $D(\tilde{z}_k,s)$ with $s<3r$, we obtain
\begin{align*}
&\| \{|f(\tilde{z}_{k})|(1-|z_k|)^{\alpha/p}\}\|_{T_{p}^{q}(Z)} =\left(\int_{\T}\left(\sum_{z_k\in \Gamma_M(\xi)}|f(\tilde{z}_{k})|^{p}(1-|z_k|)^{\alpha}\right)^{q/p} |d\xi|\right)^{1/q}\\
&\quad\lesssim \left(\int_{\T}\left(\sum_{z_k\in \Gamma_M(\xi)}\left(\int_{D(\tilde{z}_k,s)} |f(z)|^{p} \frac{\ dm(z)}{(1-|\tilde{z}_k|)^{2}}\right)(1-|z_k|)^{\alpha}\right)^{q/p} |d\xi|\right)^{1/q}.
\end{align*}
Since $D(\tilde{z}_k,s)\subset D(z_k,4r)$, applying Remark~\ref{remarkestimatedistbound} we have
\begin{align*}
&\| \{|f(\tilde{z}_{k})|(1-|z_k|)^{\alpha/p}\}\|_{T_{p}^{q}(Z)}\\
&\quad\lesssim \left(\int_{\T}\left(\sum_{z_k\in \Gamma_M(\xi)}\left(\int_{D({z}_k,4r)} |f(z)|^{p} \frac{\ dm(z)}{(1-|{z}_k|)^{2-\alpha}}\right)\right)^{q/p} |d\xi|\right)^{1/q}.
\end{align*}
 By Lemma~\ref{Wu}, we can take $M_{+}>M>1/2$ such that
\begin{align*}
\bigcup_{D(z_{k},4r)\cap \Gamma_M (\xi)\neq \emptyset } D(z_k,4r)\subset \Gamma_{M_+}(\xi),
\end{align*}
and applying Remark~\ref{remarkestimatedistbound}, it follows that
\begin{align*}
&\| \{|f(\tilde{z}_{k})|(1-|z_k|)^{\alpha/p}\}\|_{T_{p}^{q}(Z)}\\
&\quad\lesssim \left(\int_{\T}\left(\sum_{z_k\in \Gamma_M(\xi)}\left(\int_{D({z}_k,4r)} |f(z)|^{p} \frac{\ dm(z)}{(1-|{z}|)^{2-\alpha}}\right)\right)^{q/p} |d\xi|\right)^{1/q}\\
&\quad \leq  \left(\int_{\T}\left(\int_{\Gamma_{M_+}(\xi)}\left(\sum_{z_k\in \Gamma(\xi)}\chi_{D({z}_k,4r)}(z)\right) |f(z)|^{p} \frac{\ dm(z)}{(1-|{z}|)^{2-\alpha}}\right)^{q/p} |d\xi|\right)^{1/q}.
\end{align*}
Due to the fact that $Z=\{z_k\}$ is an $(r,\kappa)$-lattice, by Proposition~\ref{Ncoverdiscs} there is $N$ such that 
$$\sum_{k} \chi_{D({z}_k,4r)}(z)\leq N$$
for all $z\in \D$. Therefore, we obtain
\begin{align*}
&\| \{|f(\tilde{z}_{k})|(1-|z_k|)^{\alpha/p}\}\|_{T_{p}^{q}(Z)}\\
&\quad \lesssim N^{1/p} \left(\int_{\T}\left(\int_{\Gamma_{+}(\xi)} |f(z)|^{p} \frac{\ dm(z)}{(1-|{z}|)^{2-\alpha}}\right)^{q/p} |d\xi|\right)^{1/q} \lesssim N^{1/p}\|f\|_{T_{p}^{q}(\alpha)}.
\end{align*}

Now, we proceed with the converse inequality. Using the pointwise estimate $|f(z)|\leq \sum_{k} |f(z)| \chi_{D({z}_k,r)}(z)$ and the fact (given by Lemma~\ref{Wu}) that we can take $M_{+}>M>1/2$ such that
$$
\bigcup_{D(z_{k},r)\cap \Gamma (\xi)\neq \emptyset } D(z_k,r)\subset \Gamma_{M_+}(\xi),
$$
it follows that
\begin{align*}
\|f\|_{T_{p}^{q}(\alpha)}&=\left(\int_{\T}\left(\int_{\Gamma(\xi)} |f(z)|^{p} \frac{\ dm(z)}{(1-|{z}|)^{2-\alpha}}\right)^{q/p} |d\xi|\right)^{1/q}\\
&\leq\left(\int_{\T}\left(\sum_{k}\int_{\Gamma(\xi)\cap D({z}_k,r)} |f(z)|^{p}  \frac{\ dm(z)}{(1-|{z}|)^{2-\alpha}}\right)^{q/p} |d\xi|\right)^{1/q}\\
& \leq \left(\int_{\T}\left(\sum_{z_{k}\in \Gamma_{+}(\xi)}\int_{D({z}_k,r)} |f(z)|^{p}  \frac{\ dm(z)}{(1-|{z}|)^{2-\alpha}}\right)^{q/p} |d\xi|\right)^{1/q}.
\end{align*}
Taking $|f(\tilde{z}_k)|:=\sup_{w\in\overline{D(z_k,r)}} |f(w)|$ and applying Remark~\ref{remarkestimatedistbound}, we have
\begin{align*}
&\|f\|_{T_{p}^{q}(\alpha)}\lesssim \left(\int_{\T}\left(\sum_{z_{k}\in \Gamma_{+}(\xi)}|f(\tilde{z}_k)|^{p} (1-|z_k|)^{\alpha} \right)^{q/p} |d\xi|\right)^{1/q}\asymp\| \{|f(\tilde{z}_{k})|(1-|z_k|)^{\alpha/p}\}\|_{T_{p}^{q}(Z)}.
\end{align*}
Therefore, we are done.
\end{proof}

In \cite{Cohn_Verbitsky_200} Cohn and Verbitsky proved a result about the factorization of tent spaces of functions over the upper half-space, but we will use the following concerning the factorization of sequence tent spaces (see \cite[Proposition 6, p. 19]{MiihkinenPauPeralaWang2020}) throughout this article.

\begin{proposition}\label{FactorizationTentSequence}
	Let $0<p,q<+\infty$ and $Z=\{a_k\}$ be an $(r,\kappa)$-lattice. If $p\leq p_1,p_2\leq +\infty$, $q\leq q_1,q_2\leq +\infty$ and satisfy $\frac{1}{p}=\frac{1}{p_1}+\frac{1}{p_2}$, and $\frac{1}{q}=\frac{1}{q_1}+\frac{1}{q_2}$. Then $$T_p^q(Z)=T_{p_1}^{q_1}(Z)\cdot T_{p_2}^{q_2}(Z).$$
\end{proposition}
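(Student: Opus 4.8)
The plan is to establish the two inclusions separately, each with a matching bound on the (quasi\nobreakdash-)norms, so that $T_p^q(Z)=T_{p_1}^{q_1}(Z)\cdot T_{p_2}^{q_2}(Z)$ holds together with the equivalence $\|\{\lambda_k\}\|_{T_p^q(Z)}\asymp\inf\|\{\mu_k\}\|_{T_{p_1}^{q_1}(Z)}\|\{\nu_k\}\|_{T_{p_2}^{q_2}(Z)}$, the infimum taken over all factorizations $\lambda_k=\mu_k\nu_k$. Since each tent sequence norm depends only on the moduli $|\lambda_k|$, any unimodular phase may be absorbed into a single factor; hence it suffices to factor nonnegative sequences $|\lambda_k|=\mu_k|\nu_k|$ with $\mu_k,|\nu_k|\ge 0$, and from now on I assume $\lambda_k\ge 0$.

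The inclusion $T_{p_1}^{q_1}(Z)\cdot T_{p_2}^{q_2}(Z)\subseteq T_p^q(Z)$ is the routine direction and follows from a double application of H\"older's inequality. Given $\lambda_k=\mu_k\nu_k$, at a fixed $\xi\in\T$ I would apply H\"older to the inner sum with the conjugate exponents $p_1/p,\ p_2/p$, which is legitimate because $\tfrac{p}{p_1}+\tfrac{p}{p_2}=1$, obtaining
\[
\left(\sum_{z_k\in\Gamma(\xi)}\lambda_k^{p}\right)^{1/p}\le\left(\sum_{z_k\in\Gamma(\xi)}\mu_k^{p_1}\right)^{1/p_1}\left(\sum_{z_k\in\Gamma(\xi)}|\nu_k|^{p_2}\right)^{1/p_2}=:A(\xi)\,B(\xi).
\]
Then I would apply H\"older on $\T$ to $A(\xi)^{q}B(\xi)^{q}$ with exponents $q_1/q,\ q_2/q$, legitimate since $\tfrac{q}{q_1}+\tfrac{q}{q_2}=1$, which yields $\|\{\lambda_k\}\|_{T_p^q(Z)}\le\|\{\mu_k\}\|_{T_{p_1}^{q_1}(Z)}\|\{\nu_k\}\|_{T_{p_2}^{q_2}(Z)}$. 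The endpoint cases $p_i=\infty$ or $q_i=\infty$ follow from the same computation with the supremum replacing the corresponding sum or integral.

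The reverse inclusion is the substantial one and requires an explicit construction. Given $\{\lambda_k\}\in T_p^q(Z)$ with $\lambda_k\ge 0$, set $G(\xi)=\bigl(\sum_{z_k\in\Gamma(\xi)}\lambda_k^{p}\bigr)^{1/p}$, so that $\|\{\lambda_k\}\|_{T_p^q(Z)}=\|G\|_{L^q(\T)}$. For each lattice point let $I_k=\{\xi\in\T:z_k\in\Gamma(\xi)\}$ denote its shadow, an arc of length $\asymp 1-|z_k|$, and put $F_k=\bigl(\tfrac{1}{|I_k|}\int_{I_k}G(\xi)^{q}\,|d\xi|\bigr)^{1/q}$. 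Taking $a=\tfrac{q}{q_1}-\tfrac{p}{p_1}$, I would define
\[
\mu_k=\lambda_k^{p/p_1}F_k^{\,a},\qquad \nu_k=\lambda_k^{p/p_2}F_k^{-a}.
\]
Because $\tfrac{p}{p_1}+\tfrac{p}{p_2}=1$ these satisfy $\mu_k\nu_k=\lambda_k$, and the choice of $a$ is arranged precisely so that $a+\tfrac{p}{p_1}=\tfrac{q}{q_1}$ and $-a+\tfrac{p}{p_2}=\tfrac{q}{q_2}$. Heuristically, treating $F_k$ as comparable to $G(\xi)$ for $z_k\in\Gamma(\xi)$,
\[
\sum_{z_k\in\Gamma(\xi)}\mu_k^{p_1}=\sum_{z_k\in\Gamma(\xi)}\lambda_k^{p}F_k^{ap_1}\approx G(\xi)^{ap_1}\sum_{z_k\in\Gamma(\xi)}\lambda_k^{p}=G(\xi)^{ap_1+p}=G(\xi)^{p_1q/q_1},
\]
so that $\|\{\mu_k\}\|_{T_{p_1}^{q_1}(Z)}\approx\|G^{q/q_1}\|_{L^{q_1}}=\|G\|_{L^q}^{q/q_1}$ and symmetrically $\|\{\nu_k\}\|_{T_{p_2}^{q_2}(Z)}\approx\|G\|_{L^q}^{q/q_2}$; multiplying recovers $\|G\|_{L^q}=\|\{\lambda_k\}\|_{T_p^q(Z)}$, which is the desired reverse bound.

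The main obstacle is turning the comparison ``$F_k\approx G(\xi)$ for $z_k\in\Gamma(\xi)$'' into rigorous one\nobreakdash-sided estimates, since $G$ genuinely oscillates and the required direction of inequality flips with the sign of $a$. I would control this with two ingredients. First, Remark~\ref{remarkestimatedistbound} and the lattice geometry show that the shadows $I_k$ of the points $z_k$ in a fixed dyadic annulus $2^{-n-1}\le 1-|z|\le 2^{-n}$ meeting $\Gamma(\xi)$ are comparable arcs, all containing a fixed neighborhood of $\xi$; thus $F_k$ is, up to constants, an average of $G^{q}$ over an arc about $\xi$ of length $\asymp 1-|z_k|$, and by Lemma~\ref{lemma_N_luecking} together with Proposition~\ref{Ncoverdiscs} only boundedly many such $z_k$ arise at each scale. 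Second, the passage from these averages back to $\|G\|_{L^q}$ is a Hardy--Littlewood maximal estimate on $\T$: the scale averages $F_k$ are dominated by the maximal function $MG$ (respectively, in the regime $a<0$, bounded below by $G(\xi)$ on the scale where $z_k$ carries the dominant contribution), and $\|MG\|_{L^q}\lesssim\|G\|_{L^q}$ for $q>1$, with the quasi\nobreakdash-norm case handled by the usual power\nobreakdash-weight device. Summing over dyadic scales using the bounded\nobreakdash-overlap count and integrating in $\xi$ then upgrades the heuristic to $\|\{\mu_k\}\|_{T_{p_1}^{q_1}(Z)}\lesssim\|G\|_{L^q}^{q/q_1}$ and $\|\{\nu_k\}\|_{T_{p_2}^{q_2}(Z)}\lesssim\|G\|_{L^q}^{q/q_2}$. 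The endpoint cases $p_i,q_i\in\{\infty\}$ are treated with the supremum versions of $G$ and $F_k$ and with Lemma~\ref{lemma_sup} in place of the maximal estimate.
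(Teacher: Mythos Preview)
The paper does not give its own proof of this proposition; it simply cites \cite[Proposition~6]{MiihkinenPauPeralaWang2020} and remarks that the same argument covers the endpoint indices. So there is no paper proof to compare against beyond the fact that the cited argument is the Cohn--Verbitsky construction, which is precisely the strategy you adopt. Your H\"older direction is fine, and your factorization $\mu_k=\lambda_k^{p/p_1}F_k^{a}$, $\nu_k=\lambda_k^{p/p_2}F_k^{-a}$ with $a=q/q_1-p/p_1$ is the right ansatz.

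The gap is in the estimate you call ``a Hardy--Littlewood maximal estimate.'' With $F_k=\bigl(|I_k|^{-1}\int_{I_k}G^{q}\bigr)^{1/q}$ and $a>0$, the only pointwise bound available for $\xi\in I_k$ is $F_k\le (M(G^q)(\xi))^{1/q}$, \emph{not} $F_k\le MG(\xi)$; feeding this into your computation yields
\[
\|\{\mu_k\}\|_{T_{p_1}^{q_1}}^{q_1}\le \int_{\T}\bigl(M(G^q)\bigr)^{\theta}(G^q)^{1-\theta}\,|d\xi|,\qquad \theta=\tfrac{aq_1}{q}\in(0,1),
\]
and this is \emph{not} controlled by $\int_{\T}G^q$: the maximal operator is applied to an $L^1$ function, and $M\!:\!L^1\not\to L^1$. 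A concrete obstruction is $G^q=\sum_{j=1}^{N}2^{j}\chi_{[2^{-j},2^{-j+1}]}$ (realizable by placing one lattice point at each dyadic depth with disjoint shadows), for which $\int G^q\asymp N$ while $\int (M(G^q))^{\theta}(G^q)^{1-\theta}\asymp N^{1+\theta}$. Thus your chain of inequalities blows up even though the desired bound $\|\{\mu_k\}\|_{T_{p_1}^{q_1}}\lesssim\|G\|_{L^q}^{q/q_1}$ is true. For the companion factor $\{\nu_k\}$ you need an \emph{upper} bound on $F_k^{-a}$, i.e.\ a lower bound on $F_k$; ``bounded below by $G(\xi)$ on the scale where $z_k$ carries the dominant contribution'' is not a usable inequality, and the trivial bound $F_k\ge\lambda_k$ is too crude to recover $\|G\|_{L^q}^{q/q_2}$.

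The fix in the literature is not to replace the maximal bound by a sharper pointwise estimate but to exploit more of the tent--space structure: one either takes the averaging exponent strictly below $q$ (so the maximal function acts on an $L^{>1}$ function) together with a separate argument for the second factor, or---as in Cohn--Verbitsky---one proves directly a pointwise inequality of the form $A_{p_2}(\nu)(\xi)\lesssim G(\xi)^{q/q_2}$ using that for $z_k\in\Gamma(\xi)$ the shadow $I_k$ contains $\xi$ and a suitable enlargement of cones, and then handles the remaining factor via the maximal function at a subcritical exponent. Your sketch identifies the right objects but does not supply either of these mechanisms.
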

\begin{remark}
	In \cite[Proposition 6, p. 19]{MiihkinenPauPeralaWang2020} this result is stated for the cases $p<p_1,p_2<+\infty$ and $q<q_1,q_2<+\infty$. However, using the same argument we can also extend it to extreme cases.
\end{remark}

The following propositions give us the duality for the tent spaces of sequences. They will be a cornerstone in the proof of the main result of this paper.
\begin{proposition}\cite[Lemma 6, p. 68]{Arsenovic}\label{Arsenovic1}
	Let $Z=\{z_n\}$ be a $(r,\kappa)$-lattice and $1\leq p<+\infty$, $1<q<+\infty$. Then $(T_{p}^{q}(Z))^{\ast}\cong T_{p'}^{q'}(Z)$, where $\frac{1}{p}+\frac{1}{p'}=1$ and $\frac{1}{q}+\frac{1}{q'}=1$. The isomorphism between $(T_{p}^{q}(Z))^{\ast}$ and $ T_{p'}^{q'}(Z)$ is given by the operator 
	$$\{b_k\}\mapsto \langle \cdot,\{b_k\}\rangle$$
	where $\langle \cdot,\{b_k\}\rangle$ is defined by
	\begin{align*}
	\langle \{a_k\},\{b_k\}\rangle=\sum_{k} a_{k}\ b_{k} (1-|z_{k}|),\quad \{a_k\}\in T_{p}^{q}(Z).
	\end{align*}
	In fact, $\|\{b_k\}\|_{T_{p'}^{q'}(Z)}\asymp \sup\left\{\left|\sum_{k} a_kb_k(1-|z_k|)\right|\ :\ \|\{a_k\}\|_{T_{p}^{q}(Z)}=1\right\}$. 
\end{proposition}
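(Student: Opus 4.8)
The plan is to prove the two inequalities hidden in the norm equivalence separately; the statement that the pairing $\langle\cdot,\{b_k\}\rangle$ realizes the isomorphism then drops out along the way. \emph{Easy inclusion $T_{p'}^{q'}(Z)\hookrightarrow (T_p^q(Z))^\ast$.} The starting point is the elementary ``shadow'' estimate: since $Z$ is an $(r,\kappa)$-lattice, for each $k$ the set $E_k:=\{\xi\in\T:\ z_k\in\Gamma(\xi)\}$ is an arc with $|E_k|\asymp 1-|z_k|$, so by Tonelli $\sum_k c_k(1-|z_k|)\asymp \int_\T \sum_{z_k\in\Gamma(\xi)}c_k\,|d\xi|$ for nonnegative $\{c_k\}$. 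Taking $c_k=|a_kb_k|$, then Hölder in $k$ with exponents $(p,p')$ inside the integral, and finally Hölder in $\xi$ with exponents $(q,q')$, yields
\[
\Big|\sum_k a_kb_k(1-|z_k|)\Big|\lesssim \int_\T\Big(\sum_{z_k\in\Gamma(\xi)}|a_k|^p\Big)^{1/p}\Big(\sum_{z_k\in\Gamma(\xi)}|b_k|^{p'}\Big)^{1/p'}|d\xi|\le \|\{a_k\}\|_{T_p^q(Z)}\,\|\{b_k\}\|_{T_{p'}^{q'}(Z)},
\]
with the inner Hölder replaced by $\sum_{z_k\in\Gamma(\xi)}|a_kb_k|\le\big(\sup_{z_k\in\Gamma(\xi)}|b_k|\big)\sum_{z_k\in\Gamma(\xi)}|a_k|$ when $p=1$. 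Hence $\langle\cdot,\{b_k\}\rangle\in(T_p^q(Z))^\ast$ with norm $\lesssim\|\{b_k\}\|_{T_{p'}^{q'}(Z)}$.

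\emph{Every functional is of this form.} Consider the map $J\colon T_p^q(Z)\to L^q(\T;\ell^p)$, $(Ja)(\xi)=\big(a_k\,\chi_{E_k}(\xi)\big)_k$. Directly from the definitions $\|Ja\|_{L^q(\T;\ell^p)}=\|\{a_k\}\|_{T_p^q(Z)}$, so $J$ is an isometry onto a closed subspace. Given $\Phi\in(T_p^q(Z))^\ast$, Hahn--Banach produces $\widetilde\Phi\in(L^q(\T;\ell^p))^\ast$ with $\|\widetilde\Phi\|=\|\Phi\|$; for $1<p<+\infty$ (hence $\ell^p$ reflexive) one has $(L^q(\T;\ell^p))^\ast=L^{q'}(\T;\ell^{p'})$, so $\widetilde\Phi$ is integration against some $G\in L^{q'}(\T;\ell^{p'})$ with $\|G\|=\|\Phi\|$. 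Put
\[
b_k:=\frac{1}{1-|z_k|}\int_{E_k}G(\xi)_k\,|d\xi|.
\]
Then $\Phi(\{a_k\})=\widetilde\Phi(Ja)=\sum_k a_kb_k(1-|z_k|)$ for every $\{a_k\}\in T_p^q(Z)$, the interchange of sum and integral being legitimate by the easy-direction bound applied to $G$.

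\emph{The norm estimate for $\{b_k\}$.} It remains to show $\{b_k\}\in T_{p'}^{q'}(Z)$ with $\|\{b_k\}\|_{T_{p'}^{q'}(Z)}\lesssim\|G\|_{L^{q'}(\T;\ell^{p'})}$. Exactly as for $J$, $\|\{b_k\}\|_{T_{p'}^{q'}(Z)}=\big\|\,(\chi_{E_k}(\cdot)\,b_k)_k\,\big\|_{L^{q'}(\T;\ell^{p'})}$. Since $\xi\in E_k$ whenever $z_k\in\Gamma(\xi)$ and $|E_k|\asymp1-|z_k|$, one gets the pointwise bound $\chi_{E_k}(\xi)|b_k|\lesssim \mathcal M\big(G(\cdot)_k\big)(\xi)$ with $\mathcal M$ the Hardy--Littlewood maximal operator on $\T$, whence
\[
\|\{b_k\}\|_{T_{p'}^{q'}(Z)}\lesssim \Big\|\big(\mathcal M(G(\cdot)_k)\big)_k\Big\|_{L^{q'}(\T;\ell^{p'})}\lesssim \|G\|_{L^{q'}(\T;\ell^{p'})}=\|\Phi\|,
\]
the last step being the Fefferman--Stein vector-valued maximal inequality ($1<p',q'<+\infty$). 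Combined with the easy inclusion this gives $\|\{b_k\}\|_{T_{p'}^{q'}(Z)}\asymp\|\Phi\|$, proving the proposition for $1<p<+\infty$. For the endpoint $p=1$ (so $p'=\infty$ and the target is $T_\infty^{q'}(Z)$) the dual of $L^q(\T;\ell^1)$ is larger than $L^{q'}(\T;\ell^\infty)$, so instead one sets $b_k:=\Phi(e_k)/(1-|z_k|)$ on the dense set of finitely supported sequences and estimates $\|\{b_k\}\|_{T_\infty^{q'}(Z)}$ by $L^{q'}$--$L^q$ duality on $\T$ together with a stopping-time selection of one lattice point per dyadic generation along each cone $\Gamma(\xi)$, as in Coifman--Meyer--Stein (this argument also reproves $1<p<+\infty$ without invoking Fefferman--Stein).

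The main obstacle is precisely the estimate $\|\{b_k\}\|_{T_{p'}^{q'}(Z)}\lesssim\|\Phi\|$: the image $J(T_p^q(Z))$ is only a proper closed subspace of $L^q(\T;\ell^p)$ — its elements are the ``$\xi$-independent'' ones — so recovering a genuine sequence from the Hahn--Banach density $G$ forces one to average over the shadows $E_k$, and controlling this average simultaneously over the infinitely many lattice points sitting in a single cone $\Gamma(\xi)$ is exactly what demands a maximal inequality (or the CMS stopping time). The case $p=1$ is the secondary subtlety, for the reason just indicated.
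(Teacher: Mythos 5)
The paper does not prove this proposition; it is imported verbatim from Arsenovi\'c [Lemma 6, p.~68], so there is no internal argument to compare against and your proof has to be judged on its own. For $1<p<+\infty$, $1<q<+\infty$ it is correct and is a clean, standard alternative to the stopping-time arguments in the cited sources: the map $J$ is indeed an isometry because $\sum_{z_k\in\Gamma(\xi)}|a_k|^p=\|(a_k\chi_{E_k}(\xi))_k\|_{\ell^p}^p$; the representation of the Hahn--Banach extension is legitimate since $\ell^{p'}$ is reflexive and hence has the Radon--Nikod\'ym property; the interchange of sum and integral is covered by the absolute-convergence bound you quote; and the pointwise estimate $\chi_{E_k}(\xi)|b_k|\lesssim \mathcal{M}\bigl(G(\cdot)_k\bigr)(\xi)$ holds because $E_k$ is an arc containing $\xi$ of length $\asymp 1-|z_k|$, after which Fefferman--Stein finishes the norm estimate. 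You also correctly identify that the genuine content is recovering a $\xi$-independent sequence from the density $G$.

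The gap is the endpoint $p=1$, which the statement includes and which the paper actually needs: in case (b) of the proof of Theorem~4.1 the factorization produces the lower exponent $\frac{pbt}{t+p(bt-1)}=1$ exactly when $p=t$, and the duality is then invoked with target $T_{\infty}^{(bs)'}(Z)$. You diagnose correctly why your main route breaks there ($\ell^\infty=(\ell^1)^{\ast}$ fails the Radon--Nikod\'ym property, so $(L^q(\T;\ell^1))^{\ast}$ is strictly larger than $L^{q'}(\T;\ell^\infty)$), but the proposed repair is only named, not carried out. To close it one must actually prove $\int_{\T}\bigl(\sup_{z_k\in\Gamma(\xi)}|b_k|\bigr)^{q'}\,|d\xi|\lesssim\|\Phi\|^{q'}$ for $b_k=\Phi(e_k)/(1-|z_k|)$: make a measurable selection of a near-maximizing index $k(\xi)$ in each cone, test $\Phi$ against the resulting $T_1^q(Z)$-normalized sequence, and dualize in $L^{q'}(\T)$, controlling the overlap of the shadows $E_{k(\xi)}$. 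That selection argument is precisely the Coifman--Meyer--Stein step you allude to; as written, the $p=1$ case is a sketch rather than a proof, while everything else stands.
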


\begin{proposition}\cite[Poposition 2, p. 72]{Arsenovic}\label{Arsenovic2}
	Let $Z=\{z_n\}$ be a $(r,\kappa)$-lattice and  $0<p<1<q<+\infty$. Then $(T_{p}^{q}(Z))^{\ast}\cong T_{\infty}^{q'}(Z)$, where $\frac{1}{q}+\frac{1}{q'}=1$. The isomorphism between $(T_{p}^{q}(Z))^{\ast}$ and $T_{\infty}^{q'}(Z)$
	is given by the operator as in Proposition~\ref{Arsenovic1}.
	In fact, $\|\{b_k\}\|_{T_{\infty}^{q'}(Z)}\asymp \sup\left\{\left|\sum_{k} a_kb_k(1-|z_k|)\right|\ :\ \|\{a_k\}\|_{T_{p}^{q}(Z)}=1\right\}$.
\end{proposition}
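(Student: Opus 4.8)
The plan is to realise $(T_p^q(Z))^{*}$ as $T_\infty^{q'}(Z)$ through the stated pairing, establishing the two‑sided norm estimate, from which the isomorphism and the displayed norm equivalence both follow. Throughout I write $\widehat{z}_k=\{\xi\in\T:z_k\in\Gamma(\xi)\}$ for the shadow of a lattice point; it is an arc with $|\widehat{z}_k|\asymp 1-|z_k|$, so by Fubini $\sum_k c_k(1-|z_k|)\asymp\int_{\T}\big(\sum_{z_k\in\Gamma(\xi)}c_k\big)\,|d\xi|$ for $c_k\ge0$. The easy inclusion $T_\infty^{q'}(Z)\hookrightarrow(T_p^q(Z))^{*}$ I would do directly: for $\{b_k\}\in T_\infty^{q'}(Z)$ and a finitely supported $\{a_k\}$,
$$\Big|\sum_k a_kb_k(1-|z_k|)\Big|\ \le\ \sum_k|a_k|\,|b_k|\,(1-|z_k|)\ \asymp\ \int_{\T}\Big(\sum_{z_k\in\Gamma(\xi)}|a_k|\,|b_k|\Big)\,|d\xi|,$$
and the inner sum is $\le\big(\sup_{z_k\in\Gamma(\xi)}|b_k|\big)\sum_{z_k\in\Gamma(\xi)}|a_k|\le\big(\sup_{z_k\in\Gamma(\xi)}|b_k|\big)\big(\sum_{z_k\in\Gamma(\xi)}|a_k|^{p}\big)^{1/p}$, using that $0<p<1$ gives $\|\cdot\|_{\ell^1}\le\|\cdot\|_{\ell^p}$; Hölder in $\xi$ with exponents $q,q'$ then yields the bound $\|\{a_k\}\|_{T_p^q(Z)}\|\{b_k\}\|_{T_\infty^{q'}(Z)}$. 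Thus $\{b_k\}\mapsto\langle\cdot,\{b_k\}\rangle$ is a bounded injection of $T_\infty^{q'}(Z)$ into $(T_p^q(Z))^{*}$ (injectivity by testing against $\delta_j$), and this already gives the inequality "$\gtrsim$" in the final norm equivalence.

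For the converse, finitely supported sequences are dense in $T_p^q(Z)$ (truncate and use dominated convergence, the quasi‑norm being an integral), so a functional $\Phi$ is determined by the numbers $\Phi(\delta_k)$. Setting $b_k:=\Phi(\delta_k)/(1-|z_k|)$ gives $\Phi(\{a_k\})=\sum_k a_kb_k(1-|z_k|)$ for finitely supported $\{a_k\}$, and the proof is complete once $\{b_k\}\in T_\infty^{q'}(Z)$ with $\|\{b_k\}\|_{T_\infty^{q'}(Z)}\lesssim\|\Phi\|$, since then $\Phi$ agrees on a dense subspace with the bounded functional $\langle\cdot,\{b_k\}\rangle$. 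By replacing $\{b_k\}$ by its truncations $\{b_k\,\chi_{\{|k|\le N\}}\}$ (whose action on sequences supported in $\{|k|\le N\}$ still agrees with $\Phi$) and letting $N\to\infty$ by monotone convergence, it suffices to prove, with an \emph{absolute} constant, that $\|\{b_k\}\|_{T_\infty^{q'}(Z)}\le C\sup\{|\sum_k a_kb_k(1-|z_k|)|:\|\{a_k\}\|_{T_p^q(Z)}\le1\}$ for every finitely supported $\{b_k\}$.

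This last estimate is the main obstacle, and I would obtain it by a stopping‑time (Calderón–Zygmund) decomposition of the level sets of the non‑tangential maximal function $g(\xi):=\sup_{z_k\in\Gamma(\xi)}|b_k|$, which is bounded, nonnegative, and satisfies $\|g\|_{L^{q'}(\T)}=\|\{b_k\}\|_{T_\infty^{q'}(Z)}$. Writing $\Omega_m=\{g>2^m\}$ as the disjoint union of its component arcs $I_{m,\ell}$ one has $\int_{\T}g^{q'}\asymp\sum_{m,\ell}2^{mq'}|I_{m,\ell}\setminus\Omega_{m+1}|$; moreover, since $z_k\in\Gamma(\xi)$ forces $g\ge|b_k|$ on $\widehat z_k$, any $z_k$ with $|b_k|\in(2^{m},2^{m+1}]$ has $\widehat z_k$ inside a single $I_{m,\ell}$, and such shadows cover $I_{m,\ell}\setminus\Omega_{m+1}$. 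From this data one builds a test sequence $\{a_k\}$ supported on a sparse (Whitney‑type) selection of such points inside the $I_{m,\ell}$, with coefficients normalised so that $\sum_k a_kb_k(1-|z_k|)\asymp\int_{\T}g^{q'}=\|g\|_{q'}^{q'}$ while $|a_k|\lesssim 2^{m(q'-1)}$ on the points selected at level $m$. Since a fixed cone $\Gamma(\xi)$ meets the selection at level $m$ only when $\xi\in\Omega_m$, i.e. $m<\log_2 g(\xi)$, and then only in a controlled number of points, and since $q'>1$, the fibre sum is a convergent geometric series,
$$\sum_{z_k\in\Gamma(\xi)}|a_k|^{p}\ \lesssim\ \sum_{m<\log_2 g(\xi)}2^{mp(q'-1)}\ \asymp\ g(\xi)^{p(q'-1)},$$
whence $\|\{a_k\}\|_{T_p^q(Z)}=\big\|\big(\sum_{z_k\in\Gamma(\cdot)}|a_k|^{p}\big)^{1/p}\big\|_{L^q}\lesssim\|g^{q'-1}\|_{L^q}=\|g\|_{q'}^{\,q'-1}$. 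Feeding $\{a_k\}/\|\{a_k\}\|_{T_p^q(Z)}$ into the supremum gives $\|g\|_{q'}^{q'}\lesssim\|\Phi\|\,\|g\|_{q'}^{q'-1}$, i.e. $\|\{b_k\}\|_{T_\infty^{q'}(Z)}\lesssim\|\Phi\|$. The hypothesis $0<p<1$ enters twice in an essential way: it makes the fibre space $\ell^{p}$ have dual $\ell^{\infty}$ — which is exactly why the dual is the \emph{sup}-tent space $T_\infty^{q'}(Z)$ rather than a $T_{p'}^{q'}(Z)$ — and it is what makes the exponent $p(q'-1)$ summable. The step I expect to be genuinely delicate is the Whitney refinement of the arcs $I_{m,\ell}\setminus\Omega_{m+1}$: the selected points must be chosen at scales matching $|I_{m,\ell}\setminus\Omega_{m+1}|$ closely enough to secure $|a_k|\lesssim 2^{m(q'-1)}$ without spoiling the lower bound $\sum_k a_kb_k(1-|z_k|)\gtrsim\|g\|_{q'}^{q'}$.

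Alternatively, one may bypass the hand‑built test sequence by combining the factorization $T_p^q(Z)=T_1^q(Z)\cdot T_{p/(1-p)}^\infty(Z)$ of Proposition~\ref{FactorizationTentSequence} (admissible for $0<p<1$, with $1/q=1/q+1/\infty$) with the case $p=1$ of Proposition~\ref{Arsenovic1}, namely $(T_1^q(Z))^{*}\cong T_\infty^{q'}(Z)$. This reduces the statement to the identity $\sup_{\|\{h_k\}\|_{T_{p/(1-p)}^\infty(Z)}\le1}\|\{b_kh_k\}\|_{T_\infty^{q'}(Z)}\asymp\|\{b_k\}\|_{T_\infty^{q'}(Z)}$, whose "$\le$" half is immediate from $T_{p/(1-p)}^\infty(Z)\subseteq\ell^\infty$; the reverse inequality again requires a sawtooth/stopping‑time construction of a suitable multiplier $\{h_k\}$, so the genuine difficulty is the same as above.
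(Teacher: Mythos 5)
Your argument is correct, but note that the paper does not prove this proposition at all: it is imported verbatim from Arsenovi\'c \cite[Proposition 2, p.~72]{Arsenovic}, so there is no in-paper proof to compare against. What you have written is essentially the classical Coifman--Meyer--Stein duality argument transplanted to the discrete setting, and it does close. Two remarks. First, the step you flag as ``genuinely delicate'' is in fact routine: once you know that every $z_k$ with $|b_k|\in(2^m,2^{m+1}]$ has its shadow $\widehat z_k$ contained in a single component $I_{m,\ell}$ of $\Omega_m$ and that these shadows cover $I_{m,\ell}\setminus\Omega_{m+1}$, a Vitali selection of pairwise \emph{disjoint} shadows with $\sum|\widehat z_k|\gtrsim|I_{m,\ell}\setminus\Omega_{m+1}|$ does everything at once: setting $a_k=2^{m(q'-1)}\,\overline{b_k}/|b_k|$ on the selected points gives the lower bound $\sum_k a_kb_k(1-|z_k|)\gtrsim\sum_{m,\ell}2^{mq'}|I_{m,\ell}\setminus\Omega_{m+1}|\asymp\|g\|_{q'}^{q'}$, while disjointness guarantees that each cone $\Gamma(\xi)$ meets at most \emph{one} selected point per level $m$, which is exactly what makes $\sum_{z_k\in\Gamma(\xi)}|a_k|^p\lesssim\sum_{2^m<g(\xi)}2^{mp(q'-1)}\asymp g(\xi)^{p(q'-1)}$ work; no finer Whitney matching of scales is needed. (One also uses that the shadows are open arcs, so $g$ is lower semicontinuous and $\Omega_m$ decomposes into component arcs, and that the reduction to finitely supported $\{b_k\}$ makes the final division by $\|g\|_{q'}^{q'-1}$ legitimate.) Second, your alternative route via Proposition~\ref{FactorizationTentSequence} and the $p=1$ case of Proposition~\ref{Arsenovic1} is correctly diagnosed as not avoiding the stopping-time construction: the constant sequence is \emph{not} in $T_{p/(1-p)}^{\infty}(Z)$ (the sum $\sum_{z_n\in S(u)}(1-|z_n|^2)$ already diverges against the normalization $(1-|u|^2)^{-1}$), so producing a multiplier $\{h_k\}$ witnessing $\sup_h\|\{h_kb_k\}\|_{T_\infty^{q'}(Z)}\gtrsim\|\{b_k\}\|_{T_\infty^{q'}(Z)}$ requires the same sawtooth selection. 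The first route is therefore the one to keep.
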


\begin{proposition}{\cite[Lemma 3.4, p. 184]{Jevtic_1996}}\label{Jevtic}
	Let $Z=\{z_n\}$ be a $(r,\kappa)$-lattice and $1<p<+\infty$. Then $(T_{p}^{1}(Z))^{\ast}\cong T_{p'}^{\infty}(Z)$, where $\frac{1}{p}+\frac{1}{p'}=1$. The isomorphism between $(T_{p}^{1}(Z))^{\ast}$ and $T_{p'}^{\infty}(Z)$
	is given by the operator as in Proposition~\ref{Arsenovic1}.
	
		In fact, $\|\{b_k\}\|_{T_{p'}^{\infty}(Z)}\asymp \sup\left\{\left|\sum_{k} a_kb_k(1-|z_k|)\right|\ :\ \|\{a_k\}\|_{T_{p}^{1}(Z)}=1\right\}$. 
\end{proposition}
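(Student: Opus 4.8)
\textbf{Proof plan for Proposition~\ref{Jevtic} (duality $(T_p^1(Z))^\ast \cong T_{p'}^\infty(Z)$).}

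The plan is to follow the standard template for tent-space duality, adapting the function-space argument (as in the upper-half-space results of Coifman--Meyer--Stein and the unit-disc versions of Arsenovi\'c and Jevti\'c) to the purely sequential setting. First I would verify the easy direction: given $\{b_k\}\in T_{p'}^\infty(Z)$, I claim the pairing $\langle\{a_k\},\{b_k\}\rangle=\sum_k a_k b_k (1-|z_k|)$ defines a bounded functional on $T_p^1(Z)$. The natural route is pointwise-in-$\xi$: for each $\xi\in\T$, write $\sum_{z_k\in\Gamma(\xi)} a_k b_k (1-|z_k|)$ and apply H\"older in the index $k$ over the ``shadow'' decomposition $S(u)$ that appears in the definition \eqref{TentsequenceC}, so that the $T_p^1$-``area'' sum $\bigl(\sum_{z_k\in\Gamma(\xi)}|a_k|^p\bigr)^{1/p}$ gets paired against the $T_{p'}^\infty$ Carleson-type quantity $\sup_{u\in\Gamma(\xi)}\frac{1}{1-|u|^2}\sum_{z_k\in S(u)}|b_k|^{p'}(1-|z_k|^2)$. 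Then integrate over $\T$. In practice this is cleanest via a Carleson-measure / stopping-time estimate: the measure on the lattice $\sum_k |b_k|^{p'}(1-|z_k|^2)\delta_{z_k}$ has bounded ``Carleson norm'' exactly when $\{b_k\}\in T_{p'}^\infty(Z)$, and one invokes the elementary fact that $\int_\T (\sum_{z_k\in\Gamma(\xi)} c_k)\,|d\xi| \asymp \sum_k c_k(1-|z_k|)$ for nonnegative $c_k$, combined with the maximal-function bound $\int_\T (\mathcal{M}\nu)(\xi)^{1}|d\xi|\lesssim \|\nu\|_{\mathrm{Carleson}}\cdot(\text{area sum})$ — i.e.\ the $L^1$ tent-space paired with an $L^\infty$ tent-space. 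This gives $|\langle\{a_k\},\{b_k\}\rangle|\lesssim \|\{a_k\}\|_{T_p^1(Z)}\|\{b_k\}\|_{T_{p'}^\infty(Z)}$, hence one direction of the norm equivalence.

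Next I would prove the converse: every $\Lambda\in (T_p^1(Z))^\ast$ is represented by some $\{b_k\}\in T_{p'}^\infty(Z)$ with $\|\{b_k\}\|_{T_{p'}^\infty(Z)}\lesssim \|\Lambda\|$. Because $1<p<+\infty$ and the base summability index $q=1$ is finite, finitely supported sequences are dense in $T_p^1(Z)$, so $\Lambda$ is determined by its values $\langle e_k,\cdot\rangle$; define $b_k$ by $\Lambda(e_k)=b_k(1-|z_k|)$, where $e_k$ is the $k$-th unit sequence. It remains to estimate $\|\{b_k\}\|_{T_{p'}^\infty(Z)}$, i.e.\ to bound, for every arc $I$ (equivalently every ``Stolz tip'' $u\in\Gamma(\xi)$), the quantity $\frac{1}{1-|u|^2}\sum_{z_k\in S(u)}|b_k|^{p'}(1-|z_k|^2)$. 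The standard trick is a \emph{duality-by-testing} argument localized to the shadow $S(u)$: set $a_k = |b_k|^{p'-1}\operatorname{sgn}(\overline{b_k})\,\chi_{S(u)}(z_k)$ (with a truncation to keep things finite), compute $\Lambda$ against this test sequence to get $\sum_{z_k\in S(u)}|b_k|^{p'}(1-|z_k|)$ from below, and bound $\|\{a_k\}\|_{T_p^1(Z)}$ from above by $\bigl(\sum_{z_k\in S(u)}|b_k|^{p'}(1-|z_k|)\bigr)^{1/p}\cdot(1-|u|)^{1/p'}$ using that $\Gamma(\xi)$ meets $S(u)$ only for $\xi$ in an arc of length $\asymp 1-|u|$ and then H\"older in $k$. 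Rearranging gives precisely the required bound $\sum_{z_k\in S(u)}|b_k|^{p'}(1-|z_k|)\lesssim \|\Lambda\|^{p'}(1-|u|)$. Finally, I would note the injectivity and the fact that distinct $\{b_k\}$ give distinct functionals, and that the displayed sup-formula for $\|\{b_k\}\|_{T_{p'}^\infty(Z)}$ is just the restatement of the two-sided norm equivalence.

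The main obstacle I anticipate is the converse direction, specifically the estimate $\|\{a_k\}\|_{T_p^1(Z)}\lesssim \bigl(\sum_{z_k\in S(u)}|b_k|^{p'}(1-|z_k|)\bigr)^{1/p}(1-|u|)^{1/p'}$ for the localized test sequence: one must carefully track how the cones $\Gamma(\xi)$ intersect a single shadow $S(u)$, use the geometry of the $(r,\kappa)$-lattice (bounded overlap, Proposition~\ref{Ncoverdiscs} / Lemma~\ref{lemma_N_luecking}, and Remark~\ref{remarkestimatedistbound} to replace $1-|z_k|$ by $1-|u|$ up to constants) and apply H\"older in the $k$-sum with exponents $p,p'$ \emph{inside} the $\xi$-integral, not after. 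A secondary subtlety is handling truncations so that all sums are finite before taking limits, and confirming that the choice of non-tangential aperture $M$ in $\Gamma_M(\xi)$ is immaterial — which is exactly what Remark~\ref{independetregionsequence} and Lemma~\ref{lemma_sup} are there to supply. Everything else is routine H\"older and Fubini on the lattice.
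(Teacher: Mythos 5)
The paper offers no proof of this proposition: it is imported verbatim from Jevti\'c \cite{Jevtic_1996}, so there is no in-house argument to compare against. Your plan is essentially the standard discrete Coifman--Meyer--Stein duality argument, and the representation half is sound: finitely supported sequences are dense in $T_p^1(Z)$ because both exponents are finite, $b_k:=\Lambda(e_k)/(1-|z_k|)$ is forced, and testing against $a_k=|b_k|^{p'-1}\operatorname{sgn}(\overline{b_k})\chi_{S(u)}(z_k)$ does give $\sum_{z_k\in S(u)}|b_k|^{p'}(1-|z_k|)\lesssim\|\Lambda\|^{p'}(1-|u|)$. One correction there: since $|a_k|^p=|b_k|^{p'}\chi_{S(u)}(z_k)$, the decisive H\"older step is in $\xi$ over the arc $\{\xi:\Gamma(\xi)\cap S(u)\neq\emptyset\}$ of length $\asymp 1-|u|$, followed by $\int_\T\chi_{\{z_k\in\Gamma(\xi)\}}\,|d\xi|\asymp 1-|z_k|$; it is not a H\"older ``in $k$'' as you write, though the bound you state is the right one.

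The genuine problem is the boundedness half as you first describe it. Writing $\sum_k a_kb_k(1-|z_k|)\asymp\int_\T\big(\sum_{z_k\in\Gamma(\xi)}a_kb_k\big)\,|d\xi|$ and applying H\"older in $k$ over the cone produces the factor $\big(\sum_{z_k\in\Gamma(\xi)}|b_k|^{p'}\big)^{1/p'}$, and this is \emph{not} controlled by $\|\{b_k\}\|_{T_{p'}^{\infty}(Z)}$: the Carleson condition bounds $\sum_{z_k\in S(u)}|b_k|^{p'}(1-|z_k|)$ by $1-|u|$ tent by tent, but a cone meets roughly one lattice point per dyadic generation, so the cone sum can be unbounded in $\xi$. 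Concretely, take $b_k=1$ for the lattice points lying on a single radius ending at $\xi_0$ and $b_k=0$ otherwise; then $\|\{b_k\}\|_{T_{p'}^{\infty}(Z)}\asymp 1$ while $\sum_{z_k\in\Gamma(\xi)}|b_k|^{p'}\asymp\log\frac{1}{|\xi-\xi_0|}$ is unbounded, so the ``pointwise H\"older, then integrate'' route gives nothing. The stopping-time inequality you mention only as the ``cleanest'' variant --- namely $\sum_k|a_k||b_k|(1-|z_k|)\lesssim\int_\T A_p(a)(\xi)\,C_{p'}(b)(\xi)\,|d\xi|$ with $C_{p'}(b)(\xi)=\sup_{u\in\Gamma(\xi)}\big(\frac{1}{1-|u|^2}\sum_{z_k\in S(u)}|b_k|^{p'}(1-|z_k|^2)\big)^{1/p'}$, followed by $L^1$--$L^\infty$ H\"older --- is therefore not an optional refinement but the essential content of the lemma, and your sketch of it (the displayed ``maximal-function bound'') is too garbled to certify as it stands. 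With that inequality proved properly, the rest of your plan goes through.
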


Following the same argument of Luecking in \cite[Proposition 2, p. 352]{Luecking} and combining Lemma~\ref{lemma_N_luecking}, we have the same result of tent spaces of sequences in $\D$ now for any $(r,\kappa)$-lattice, but avoiding the cases $1<p<+\infty$, $q=1$. So that, we decide to omit the proof.

\begin{proposition}\label{Luecking1}
	Let $Z=\{z_k\}$ be an $(r,\kappa)$-lattice. If either $0<p<+\infty$, $0<q<1$ or $0<p\leq 1$, $q=1$, then  
	$$
	\sup\left\{\left|\sum_{k} a_kb_k(1-|z_k|)\right|\ :\ \|\{b_k\}\|_{T_p^q(Z)}=1\right\}\asymp\sup_{k} |a_k|(1-|z_k|)^{1-1/q}$$
	for any sequence $\{a_k\}$.
\end{proposition}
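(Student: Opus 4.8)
The plan is to establish the two inequalities in the stated equivalence separately, both by duality. For the upper bound, i.e. to show that $\sup\{|\sum_k a_kb_k(1-|z_k|)| : \|\{b_k\}\|_{T_p^q(Z)}=1\}$ is $\lesssim \sup_k |a_k|(1-|z_k|)^{1-1/q}$, write $M := \sup_k |a_k|(1-|z_k|)^{1-1/q}$ and bound $|\sum_k a_kb_k(1-|z_k|)| \leq M\sum_k |b_k|(1-|z_k|)^{1/q}$. The task is then to show $\sum_k |b_k|(1-|z_k|)^{1/q}\lesssim \|\{b_k\}\|_{T_p^q(Z)}$ for all $0<p<\infty$, $0<q<1$ (and for $0<p\le1$, $q=1$). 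Here I would follow Luecking's slicing: split the sum over $k$ according to the dyadic annuli $\{1/2^{n+1}\le 1-|z_k|\le 1/2^n\}$, and for each fixed $n$ use Lemma~\ref{lemma_N_luecking} to cover the relevant part of each Carleson region by a bounded number $N$ of hyperbolic discs $D(z_k,r)$, so that on a set of $\xi$ of measure $\asymp 2^{-n}$ comparable to the arc determined by $z_k$, the point $z_k$ lies in $\Gamma(\xi)$. This converts a sum of the form $\sum_{z_k \text{ in }n\text{-th annulus}} |b_k|(1-|z_k|)$ into a lower bound (up to the constant $N$) for $\int_{\T}\sup_{z_k\in\Gamma(\xi), \text{$n$-th annulus}}|b_k|\,|d\xi|$, hence after summing a geometric-type series in $n$ and using $q<1$ (so that summing the $q$-th powers dominates the $q$-th power of the sum) one recovers $\|\{b_k\}\|_{T_p^q(Z)}$.

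For the lower bound one must exhibit, for each fixed index $k_0$, a test sequence $\{b_k\}$ with $\|\{b_k\}\|_{T_p^q(Z)}\asymp 1$ and $|\sum_k a_kb_k(1-|z_k|)|\gtrsim |a_{k_0}|(1-|z_{k_0}|)^{1-1/q}$. The natural choice is $b_k = c\,\overline{\operatorname{sgn}(a_{k_0})}\,\delta_{k,k_0}(1-|z_{k_0}|)^{-1/q}$, a single-atom sequence concentrated at $z_{k_0}$; then $\sum_k a_kb_k(1-|z_k|) = c|a_{k_0}|(1-|z_{k_0}|)^{1-1/q}$, and it remains to check that $\|\{\delta_{k,k_0}(1-|z_{k_0}|)^{-1/q}\}\|_{T_p^q(Z)}\asymp 1$. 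By the definition \eqref{TentsequenceA}, this norm to the $q$-th power equals $(1-|z_{k_0}|)^{-1}\int_{\{\xi : z_{k_0}\in\Gamma(\xi)\}}(1)^{p/q}\cdot(1)^{?}\,|d\xi|$; the set of $\xi$ with $z_{k_0}\in\Gamma_M(\xi)$ has arc-length $\asymp 1-|z_{k_0}|$ (standard geometry of the Stolz region), so the norm is $\asymp 1$, uniformly in $k_0$. Taking the supremum over $k_0$ gives the claimed lower bound.

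The main obstacle is the upper bound in the endpoint regime, and keeping the covering bookkeeping clean: one needs the number $N$ of discs in Lemma~\ref{lemma_N_luecking} to be independent of $n$ and $\xi$ (which it is), and one needs to handle the passage from a genuine sum over $k$ in a fixed annulus to the essential supremum defining neither $T_p^q$ exactly nor $T_\infty^q$ exactly — Luecking's trick is that within a single dyadic annulus the sum and the supremum are comparable up to the factor $N$, because at most $N$ of the $z_k$ in that annulus can be seen from a given $\xi$. The subcase $0<p\le 1$, $q=1$ requires a small separate argument because the outer exponent is $q=1$ rather than $q<1$; here one does not have the super-additivity of $t\mapsto t^q$ to sum the annuli freely, so instead one uses that for $q=1$ the relevant estimate $\sum_k|b_k|(1-|z_k|)\lesssim \int_\T\sum_{z_k\in\Gamma(\xi)}|b_k|\,|d\xi| = \|\{b_k\}\|_{T_1^1(Z)}$ holds directly (each $z_k$ contributes to a set of $\xi$ of measure $\asymp 1-|z_k|$ with bounded overlap), and for $0<p\le1$ the $T_p^1$ norm dominates the $T_1^1$ norm on sequences supported away from nothing — more precisely one invokes the known inclusion/comparison already implicit in the theory of $T_p^q(Z)$ for $p\le1$. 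This is exactly the point where we deviate from Luecking, who treated the Whitney lattice; Lemma~\ref{lemma_N_luecking} and Proposition~\ref{Ncoverdiscs} are the substitutes that make the argument go through for an arbitrary $(r,\kappa)$-lattice, which is why the authors say they omit the now-routine details.
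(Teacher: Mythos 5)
Your lower bound via single atoms and your reduction of the upper bound to the inequality $\sum_k|b_k|(1-|z_k|)^{1/q}\lesssim\|\{b_k\}\|_{T_p^q(Z)}$ are both correct, and so is your treatment of the cases $p\le q$ (in particular $0<p\le 1$, $q=1$): there one only needs $\sum_k x_k\le\bigl(\sum_k x_k^q\bigr)^{1/q}$, Fubini via $|E_k|:=|\{\xi:z_k\in\Gamma(\xi)\}|\asymp 1-|z_k|$, and the pointwise bound $\sum_{z_k\in\Gamma(\xi)}|b_k|^q\le\bigl(\sum_{z_k\in\Gamma(\xi)}|b_k|^p\bigr)^{q/p}$ valid when $p\le q$. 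The gap is in the main case $0<q<1$, $q<p<\infty$, which the proposition does cover and which the paper actually uses. Your annulus slicing gives, with $\sigma_n(\xi)=\sum_{k\in A_n,\,z_k\in\Gamma(\xi)}|b_k|$ and $A_n$ the $n$-th dyadic annulus,
$\sum_k|b_k|(1-|z_k|)^{1/q}\asymp\sum_n2^{-n(1/q-1)}\int_\T\sigma_n(\xi)\,|d\xi|$,
but the only pointwise information the covering lemma provides is $\sigma_n(\xi)\le N\bigl(\sum_{z_k\in\Gamma(\xi)}|b_k|^p\bigr)^{1/p}=:NF(\xi)$, and summing the geometric series then yields only a bound by $\|F\|_{L^1(\T)}$. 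For $q<1$ this does not control $\|F\|_{L^q(\T)}=\|\{b_k\}\|_{T_p^q(Z)}$ (it can be infinite when the latter is finite), and your proposed repair --- ``summing the $q$-th powers dominates the $q$-th power of the sum'' --- cannot be applied here, because $\bigl(\int_\T\sigma_n\bigr)^q\gtrsim\int_\T\sigma_n^q$ goes the wrong way; the variant where you $q$-sum over $k$ first only lands you in $T_q^q(Z)$, and $\|b\|_{T_q^q(Z)}\not\lesssim\|b\|_{T_p^q(Z)}$ when $p>q$ (take $b_k\equiv 1$ on the first $N_0$ annuli: the two norms are $\asymp N_0^{1/q}$ and $\asymp N_0^{1/p}$).

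What is missing is the stopping-time ingredient of Luecking's Proposition~2, which is exactly what the paper imports. Since $F\ge G:=\sup_{z_k\in\Gamma(\cdot)}|b_k|$ pointwise, one may take $p=\infty$; set $O_m=\{\xi\in\T: G(\xi)\ge 2^m\}$ and decompose $O_m$ into maximal arcs $I_j$. Each $E_k$ with $|b_k|\asymp 2^m$ is an arc contained in a single $I_j$, and the geometric series over the annuli is summed \emph{inside each $I_j$}, where it is truncated at scales $2^{-n}\lesssim|I_j|$; together with the overlap bound $N$ per annulus this gives $\sum_{|b_k|\asymp 2^m}|E_k|^{1/q}\lesssim N\sum_j|I_j|^{1/q}\le N|O_m|^{1/q}$, and then $\sum_m 2^m|O_m|^{1/q}\le\bigl(\sum_m2^{mq}|O_m|\bigr)^{1/q}\asymp\|G\|_{L^q(\T)}$. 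Both the truncation at scale $|I_j|$ and the two applications of $\sum_j x_j^{1/q}\le\bigl(\sum_j x_j\bigr)^{1/q}$ use $q<1$ essentially, which is also why the argument (correctly) breaks down for $q=1<p$. Without this level-set decomposition your sketch does not close in the regime $q<1<p$ (or more generally $q<p$), so as written the proof has a genuine gap precisely where the substance of Luecking's argument lies.
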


\begin{remark}
We point out that in \cite[Proposition 2, p. 352]{Luecking} above result for the $(r,\kappa)$-lattice given in Example~\ref{LueckingCenters} is stated for $0<p<+\infty$, but if $p>1$ and $q=1$, its proof does not work.
\end{remark}

The following result, due to Perälä, will be an important tool in the characterization of the Carleson measures in the next section. 

\begin{proposition}{\cite[Lemma 14, p. 24]{Perala_2018}}\label{Soperator}
	Let $Z=\{z_n\}$ be an $(r,\kappa)$-lattice, $0< p,q<+\infty$, $\alpha>0$, and  $M>\max\left\{1,p/q,1/q,1/p\right\}+\alpha/p$. Then the operator $S: T_{p}^{q}(Z)\rightarrow AT_{p}^{q}(\alpha)$, where
	\begin{align*}
	S(\{\lambda_k\})(z):=\sum_{k=0}^{\infty} \lambda_k \frac{(1-|z_k|)^{M-\frac{\alpha}{p}}}{(1-\overline{z_k}z)^{M}},
	\end{align*}
	is bounded.
\end{proposition}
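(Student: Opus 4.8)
The statement to prove is Proposition~\ref{Soperator} (Perälä's lemma): boundedness of the synthesis operator $S:T_p^q(Z)\to AT_p^q(\alpha)$.

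\medskip

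\noindent\textbf{Proof plan.}

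The plan is to reduce the boundedness of $S$ to a pointwise estimate on $|S(\{\lambda_k\})(z)|$ in terms of a sequence that is controlled in $T_p^q(Z)$, and then invoke Proposition~\ref{sequence and tent} together with Lemma~\ref{estimate 1} to pass from the sequence tent norm back to the function tent norm. First I would fix $\{\lambda_k\}\in T_p^q(Z)$ with $\|\{\lambda_k\}\|_{T_p^q(Z)}=1$ and write $F=S(\{\lambda_k\})$. The goal is $\|F\|_{T_p^q(\alpha)}\lesssim 1$. By Proposition~\ref{sequence and tent}, it suffices to bound $\|\{\nu_j\}\|_{T_p^q(Z)}$ where $\nu_j=\sup_{w\in\overline{D(z_j,r)}}|F(w)|(1-|z_j|)^{\alpha/p}$; so the whole problem becomes a discrete one, estimating the matrix action $\lambda\mapsto\nu$ on tent sequence spaces.

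The key step is the kernel estimate. For $w\in\overline{D(z_j,r)}$ one has $1-|w|\asymp 1-|z_j|$ (Remark~\ref{remarkestimatedistbound}) and $|1-\overline{z_k}w|\asymp|1-\overline{z_k}z_j|$, so
\begin{align*}
|F(w)|(1-|z_j|)^{\alpha/p}\;\lesssim\;\sum_k |\lambda_k|\,\frac{(1-|z_k|)^{M-\alpha/p}(1-|z_j|)^{\alpha/p}}{|1-\overline{z_k}z_j|^{M}}\;=:\;\sum_k |\lambda_k|\,c_{jk}.
\end{align*}
Thus $\nu_j\lesssim\sum_k c_{jk}|\lambda_k|$, and I must show the operator with kernel $(c_{jk})$ is bounded on $T_p^q(Z)$. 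The natural route is to split the sum according to the hyperbolic ``generations'': group the $z_k$ by the dyadic annulus of $1-|z_k|$ relative to $1-|z_j|$ and by angular distance, exactly as in Luecking's arguments and as packaged in Lemma~\ref{lemma_N_luecking}/Proposition~\ref{Ncoverdiscs}. For the ``near'' part (where $z_k$ lies in a cone of fixed aperture around $\xi$ whenever $z_j$ does), one uses that only $N$ lattice points sit in each relevant region to get a bounded contribution directly in $T_p^q(Z)$. For the ``far'' part one uses the decay $|1-\overline{z_k}z_j|^{-M}$ with $M$ large — this is where the hypothesis $M>\max\{1,p/q,1/q,1/p\}+\alpha/p$ enters — to sum a geometric-type series after applying the appropriate Minkowski/Hölder inequality in the $\ell^p$ and $L^q$ directions. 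Concretely, after fixing $\xi$ and looking at $\sum_{z_j\in\Gamma(\xi)}\nu_j^p$, one interchanges sums, estimates $\sum_{z_j\in\Gamma(\xi)}c_{jk}^{p}$ (or its appropriate power) by $\int_{\Gamma(\xi)}\big((1-|z_k|)/|1-\overline{z_k}z|\big)^{Mp-\alpha}\,\frac{dA(z)}{(1-|z|)^2}$ via the lattice-to-integral comparison, and then Lemma~\ref{estimate 1} with $\lambda=Mp-\alpha>\max\{1,p/q\}$ converts $\int_\T(\cdots)^{q/p}$ into $\int_\T\big(\sum_{z_k\in\Gamma(\xi)}|\lambda_k|^p(1-|z_k|)^{?}\big)^{q/p}|d\xi|$, i.e. back to $\|\{\lambda_k\}\|_{T_p^q(Z)}$.

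The main obstacle I expect is the bookkeeping in the far/tail estimate: making the interchange of the $\ell^p$-sum over $j$, the tail-sum over $k$, and the outer $L^q(\T)$ integral rigorous requires the correct choice of Hölder exponents depending on whether $p\le q$ or $p>q$ (hence the $p/q$ and $q/p$ appearing in the bound on $M$), and requires uniform control on the number of lattice points in each dyadic Whitney box (Proposition~\ref{Ncoverdiscs}, Lemma~\ref{lemma_N_luecking}). A clean way to organize this is to prove the stronger pointwise-in-$\xi$ statement $\big(\sum_{z_j\in\Gamma(\xi)}\nu_j^p\big)^{1/p}\lesssim$ a maximal-type average of $\big(\sum_{z_k\in\Gamma_{M'}(\eta)}|\lambda_k|^p\big)^{1/p}$ and then integrate, or simply to quote that $\big((1-|z_k|)/|1-\overline{z_k}z|\big)^{M}$ is, up to constants, the reproducing-type kernel whose $T_p^q$-mapping properties follow from Lemma~\ref{estimate 1}; with $M$ as large as assumed, all the series converge and the constants depend only on $p,q,\alpha,r,\kappa,M$. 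Since the statement is quoted from \cite[Lemma 14]{Perala_2018}, I would present this as a sketch and refer there for the full tail computation.
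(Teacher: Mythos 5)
The paper does not actually prove this proposition: it is imported verbatim from Per\"al\"a \cite[Lemma 14]{Perala_2018}, so there is no in-house argument to compare yours against, and your closing decision to present a sketch and defer to the source is consistent with what the authors themselves do. That said, your outline is the standard route and essentially the one the cited source follows, so it is worth recording where it can be tightened. The detour through Proposition~\ref{sequence and tent} and a matrix $(c_{jk})$ acting on $T_p^q(Z)$ is not needed: for $0<p\le 1$ one applies the subadditivity $(\sum_k a_k)^p\le\sum_k a_k^p$ directly to $|S(\{\lambda_k\})(z)|^p$, integrates over $\Gamma(\xi)$ against $(1-|z|)^{\alpha-2}\,dA(z)$ using the elementary estimate $\int_{\Gamma(\xi)}|1-\overline{z_k}z|^{-Mp}(1-|z|)^{\alpha-2}\,dA(z)\lesssim |1-\overline{z_k}\xi|^{-(Mp-\alpha)}$, and recognizes the result as $\int_{\D}\bigl((1-|z|)/|1-\overline{\xi}z|\bigr)^{Mp-\alpha}d\mu(z)$ for the discrete measure $\mu=\sum_k|\lambda_k|^p\delta_{z_k}$; Lemma~\ref{estimate 1} with $\lambda=Mp-\alpha>\max\{1,p/q\}$ (equivalently $M>\max\{1/p,1/q\}+\alpha/p$) then gives exactly $\|\{\lambda_k\}\|_{T_p^q(Z)}^q$ on the right, with no near/far decomposition and no appeal to Lemma~\ref{lemma_N_luecking}. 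For $p>1$ one inserts a single H\"older step, splitting $|1-\overline{z_k}z|^{-M}$ into two factors so that the conjugate-exponent sum is controlled by the separation of the lattice and the remaining factor reduces to the $p\le 1$ computation; this is the step that consumes the extra terms $1$ and $p/q$ in the hypothesis on $M$, and it is the one place your sketch stays vague (the exponent you leave as ``$(1-|z_k|)^{?}$'' is in fact $0$, since $\mu(\Gamma(\xi))=\sum_{z_k\in\Gamma(\xi)}|\lambda_k|^p$). None of this is a gap in the sense of a wrong turn — every ingredient you name is the right one — but a referee would ask you either to carry out the H\"older splitting or, as the paper does, simply to cite Per\"al\"a and not reprove the lemma at all.
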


\section{The tent space nature of the $RM(p,q)$}
In  this section  we present equivalent representations of the $RM(p,q)$ spaces in terms of the derivatives of their elements and as tent spaces. 
It seems that the $RM(p,q)$ and their equivalent representations  form part of the theory of Triebel spaces. 
As a
reference to that we propose  \cite{Cohn_Verbitsky_200}.
However, neither  is  clear nor is easy  to justify the equivalent formulas  through  that approach. 

Our starting point is the 
 Littlewood-Paley theory  for the Hardy spaces $H^q\, (q>0)$,
according to which
\begin{equation*}
\int_0^{2\pi} \left( \int_0^1 |f'(re^{i\theta})|^2 (1-r)\,dr \right)^{\frac q2} \,d\theta \asymp \|f\|^q_{H^q} 
\asymp \int_{\mathbb T}\,\left(\int_{\Gamma(\xi)}\,|f'(z)|^2\, dA(z)\right)^{\frac p2}\, |d\xi|\,,
\end{equation*}
and \cite{Aguilar-Contreras-Piazza_2} where the authors prove that, for $1< p,q <\infty\,$ or $(1,q)$ with $1\leq q<+\infty$,
\begin{equation}\label{CL}
\rho_{p,q}(f)^{q}\asymp \int_0^{2\pi}\,\left(\int_0^1\, |f'(re^{i\theta})|^p (1-r)^p\, dr\right)^{\frac qp}\,d\theta\,.
\end{equation}
As we shall see the equivalence (\ref{CL}) extends to the case $RM(p,1),\, p>1\,.$

 First we establish how the $RM(p,q)$ spaces can be represented as tent spaces.
Our approach is based on the ideas of \cite{Pavlovic} appropriately applied to our more general case. 
\begin{proposition}\label{non_tangential_char_without_der}
	Let $1\leq p,q<+\infty$. Then for $f\in\mathcal{H}(\D)$ we have that 
	\begin{enumerate}
		\item $\rho_{p,q}(f)\asymp \|f\|_{T_{p}^{q}(1)},$
		\item $\rho_{p,q}(f'(\cdot)(1-|\cdot|))\asymp \| f'(\cdot) (1-|\cdot|)\|_{T_p^q (1)}.$
		\newline
	\end{enumerate}

	That is,\,\, $RM(p,q)=AT_{p}^{q}(1)$.
\end{proposition}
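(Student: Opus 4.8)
The plan is to prove Proposition~\ref{non_tangential_char_without_der} by reducing everything to the already-established discrete characterization of tent spaces (Proposition~\ref{sequence and tent}) together with classical relations between radial integrals and non-tangential/dyadic sums. First I would focus on part (1), the equivalence $\rho_{p,q}(f)\asymp\|f\|_{T_p^q(1)}$; part (2) will then follow by exactly the same argument applied to the analytic function $f'$ with the extra weight $(1-|z|)$, since $f'(z)(1-|z|)$ is, up to harmless modifications, still governed by the sub-mean value property of Lemma~\ref{MVP} (applied with $n=1$). Note that once part (2) is known, the identification $RM(p,q)=AT_p^q(1)$ follows from the equivalence \eqref{CL} (valid for $1<p,q<\infty$ and for $p>1$, $q=1$) extended, as the text announces, to the remaining endpoint case; so the real content is parts (1) and (2).

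For part (1), I would fix an $(r,\kappa)$-lattice $Z=\{z_k\}$ adapted to the dyadic Luecking regions $R_{i,j}$ of Example~\ref{LueckingCenters}, and set $\lambda_k=\lambda_k(f)=\sup_{w\in\overline{D(z_k,r)}}|f(w)|(1-|z_k|)^{1/p}$ as in Proposition~\ref{sequence and tent}, which gives $\|f\|_{T_p^q(1)}\asymp\|\{\lambda_k\}\|_{T_p^q(Z)}$. It then suffices to show $\rho_{p,q}(f)\asymp\|\{\lambda_k\}\|_{T_p^q(Z)}$. The key geometric observations are: (a) each radius $\{re^{it}:0\le r<1\}$ meets a bounded number of the discs $D(z_k,r)$ in each dyadic annulus $\{1-2^{-n}\le|z|<1-2^{-n-1}\}$, and the union of the $\overline{D(z_k,r)}$ over the $z_k$ lying in a fixed thin non-tangential region $\Lambda(\xi)$ is comparable to that region (using Remark~\ref{independetregion} and Remark~\ref{remarkestimatedistbound}); (b) for $z_k\in R_{i,j}$ one has $\int_0^1|f(re^{it})|^p\,dr\gtrsim |f(z_k)|^p(1-|z_k|)$ for $e^{it}$ in the arc subtended by $R_{i,j}$, by integrating the sub-mean value estimate of Lemma~\ref{MVP} along the radius, while conversely $\int_0^1|f(re^{it})|^p\,dr\lesssim\sum_{k:\,z_k\in\Lambda(e^{it})\text{-ish}}\lambda_k^p$ by covering the radius by the relevant hyperbolic discs and using the sub-mean value property again. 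Plugging these two-sided pointwise-in-$t$ comparisons into $\rho_{p,q}(f)^q=\int_0^{2\pi}\big(\int_0^1|f(re^{it})|^p\,dr\big)^{q/p}\,dt$ and comparing with $\int_{\T}\big(\sum_{z_k\in\Gamma(\xi)}\lambda_k^p\big)^{q/p}|d\xi|$ yields the desired equivalence; this is essentially the argument of \cite{Pavlovic} transported to the mixed-norm setting, and the lattice/covering bookkeeping is controlled by Proposition~\ref{Ncoverdiscs} and Lemma~\ref{lemma_N_luecking}.

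For part (2), I would repeat the scheme verbatim with $f$ replaced by $f'$ and with the weight $(1-|z|^2)^{p-1}$ absorbed: concretely, put $\widetilde\lambda_k=\sup_{w\in\overline{D(z_k,r)}}|f'(w)|(1-|z_k|)(1-|z_k|)^{1/p}=\sup_{w\in\overline{D(z_k,r)}}|f'(w)|(1-|z_k|)^{1+1/p}$, apply Proposition~\ref{sequence and tent} with $\alpha$ chosen so that $(1-|z|^2)^{2-\alpha}$ matches the weight appearing in $\|f'(w)(1-|w|)\|_{T_p^q(1)}$ — i.e.\ the tent norm of the measurable function $w\mapsto f'(w)(1-|w|)$ with the $\alpha=1$ weight is the tent norm of $f'$ with weight $(1-|w|^2)^{p}/(1-|w|^2)^{2-1}=(1-|w|^2)^{p-1}$ — and use Lemma~\ref{MVP} with $n=1$ in place of $n=0$. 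Everything else (the radial comparison, the covering, the dyadic annuli) is identical, so the same two-sided estimates give $\rho_{p,q}(f'(z)(1-|z|))\asymp\|f'(w)(1-|w|)\|_{T_p^q(1)}$.

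The main obstacle, in my view, is carrying the geometric covering arguments through the lower bound of part (1) cleanly: showing $\rho_{p,q}(f)\gtrsim\|\{\lambda_k\}\|_{T_p^q(Z)}$ requires that the radial integral $\int_0^1|f(re^{it})|^p\,dr$ genuinely controls all the lattice terms $\lambda_k^p$ with $z_k$ non-tangentially close to $e^{it}$, and one must be careful that a single radius only "sees" a bounded-overlap family of the $\overline{D(z_k,r)}$ while still capturing enough of each to recover $|f(z_k)|^p(1-|z_k|)$ via the sub-mean value property — here the precise choice of aperture $M$ and of the lattice constants $r>\kappa$, and the passage between $\Gamma_M(\xi)$ and $\Gamma_{M_+}(\xi)$ (as in the proof of Proposition~\ref{sequence and tent}), must be handled with the same care as in \cite{Luecking}. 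Once that bookkeeping is in place, the endpoint extension of \eqref{CL} to $RM(p,1)$, $p>1$, is routine (it only uses $q=1$, which is a non-degenerate exponent for the outer integral and poses no duality issue), and the identification $RM(p,q)=AT_p^q(1)$ follows by combining parts (1)–(2) with \eqref{CL}.
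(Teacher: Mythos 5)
The easy direction is fine: your covering of each radius by boundedly many lattice discs per dyadic annulus, plus the sub--mean value property, gives $\rho_{p,q}(f)\lesssim\|f\|_{T_p^q(1)}$ essentially as in the paper (which uses Euclidean discs $E_n(\theta)$ instead of the lattice, but this is cosmetic). The genuine gap is in your lower bound, i.e.\ the direction $\|\{\lambda_k\}\|_{T_p^q(Z)}\lesssim\rho_{p,q}(f)$. Your key claim (b), that $\int_0^1|f(re^{it})|^p\,dr\gtrsim|f(z_k)|^p(1-|z_k|)$ \emph{pointwise} for every $t$ in the arc subtended by $R_{i,j}$, does not follow from Lemma~\ref{MVP}: the sub--mean value property controls $|f(z_k)|^p$ by a two--dimensional average over $D(z_k,r)$, not by the average over a single radius passing at Euclidean distance $\asymp(1-|z_k|)$ from $z_k$, and the pointwise inequality is false in general (an analytic function can be large at $z_k$ while being small along one particular nearby radius). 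What is true is only the \emph{averaged} version, $|f(z_k)|^p(1-|z_k|)\lesssim\frac{1}{|I_k|}\int_{I_k}\bigl(\int_0^1|f(\rho e^{is})|^p\,d\rho\bigr)\,ds$ over the subtended arc $I_k$; but then, after summing over $z_k\in\Gamma(\xi)$ and raising to the power $q/p$, you are facing an $\ell^1$--sum of averages over arcs shrinking to $\xi$, and passing this through the outer $L^{q/p}$ (quasi--)norm is exactly the hard point — it cannot be done by ``plugging in'' Fubini, especially when $q<p$. This is precisely where the paper's proof invokes the log--subharmonicity of the radial slice functions $h_n$, the non--tangential maximal operator $M_\ast$, and the maximal theorem \cite[Theorem 1.8]{Pavlovic} (together with \cite[Proposition 2.12]{Aguilar-Contreras-Piazza} to remove the dilation). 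Your proposal contains no maximal--function ingredient, so the lower bound is not established.

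Two smaller remarks. First, the identification $RM(p,q)=AT_p^q(1)$ already follows from part (1) alone; routing it through part (2) and \eqref{CL} is an unnecessary detour (and \eqref{CL} as quoted does not cover $p>1$, $q=1$, which is the endpoint the paper says is \emph{obtained} from this proposition, not used to prove it). Second, in part (2) note that $f'(z)(1-|z|)$ is not analytic, so Proposition~\ref{sequence and tent} cannot be cited as is; you correctly flag that Lemma~\ref{MVP} with $n=1$ must be substituted, and the paper likewise just says the same argument applies, so this part is acceptable once the lower bound of part (1) is repaired.
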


\begin{proof}
	$(1)$ First we show that 
	\begin{equation}\label{charineqA}
\rho_{p,q}(f)\lesssim \left(\int_{0}^{2\pi} \left(\int_{\Gamma(e^{i\theta})} |f(w)|^{p}\ \frac{dA(w))}{1-|w|}\right)^{q/p} \frac{d\theta}{2\pi}\right)^{1/q}.
	\end{equation}
	Fixed $\theta\in [0,2\pi]$, we have that
	\begin{align*}
	\int_{0}^{1} |f(re^{i\theta})|^{p} \ dr &=\sum_{k=0}^{\infty} \int_{1-2^{-n}}^{1-2^{-(n+1)}}  |f(re^{i\theta})|^{p} \ dr\\
	&\leq \sum_{k=0}^{\infty} \sup_{1-2^{-n}<r<1-2^{-(n+1)}} |f(re^{i\theta})|^{p} (2^{-n}-2^{-(n+1)})\\
	&= \sum_{k=0}^{\infty} 2^{-(n+1)} \sup_{2^{-(n+1)}<1-r<2^{-n}} |f(re^{i\theta})|^{p}\\
	&\lesssim  \sum_{k=0}^{\infty}  \frac{2^{-(n+1)}}{2^{-2n}} \int_{E_n(\theta)} |f(w)|^{p}\ dA(w),
	\end{align*}
	where $E_n(\theta)=\{w\in\D: |w-(1-2^{-n})e^{i \theta}|<\frac{3}{2^{n+2}} \}$. Moreover, it is easy to see that $E_j(\theta)\cap E_{k}(\theta)=\emptyset$, if $|j-k|\geq 3$, and that there is a constant $ M>\frac 12$ such that $E_{n}(\theta)\subset \Gamma_M (e^{i\theta})$ for all $n\in\N$. Therefore, it follows that 
	\begin{align*}
	\int_{0}^{1} |f(re^{i\theta})|^{p} \ dr & \lesssim \sum_{k=0}^{\infty}  2^{n} \int_{E_n(\theta)} |f(w)|^{p} \frac{(1-|w|)}{(1-|w|)}\ dA(w)\\
	&\lesssim \sum_{k=0}^{\infty}   \int_{E_n(\theta)} |f(w)|^{p} \frac{dA(w)}{1-|w|}\lesssim \int_{\Gamma_M (e^{i\theta})} |f(w)|^{p} \frac{dA(w)}{1-|w|}
	\end{align*}
	From this, we clearly have \eqref{charineqA}.
	
	Now,  for the comparison from below, we continue with the proof of 
	\begin{align*}
	\left(\int_{\mathbb T} \left(\int_{\Lambda (\xi)} |f(w)|^{p}\ \frac{dA(w)}{1-|w|}\right)^{q/p} |d\xi| \right)^{1/q}\lesssim  \rho_{p,q}(f).
	\end{align*}
	
	Notice that for $\xi\in\T$
	\begin{align*}
	\int_{\Lambda (\xi)} & |f(w)|^{p}\ \frac{dA(w)}{1-|w|}\\
	&=\int_{0}^{1} \int_{|\theta|<1-r} |f(re^{i\theta}\xi)|^{p} (1-r)^{-1} d\theta \ dr \\
	&\leq \sum_{n=0}^{\infty} \int_{1-2^{-n}}^{1-2^{-(n+1)}} \int_{|\theta|<2^{-n}} |f(re^{i\theta}\xi)|^{p}\ (1-r)^{-1}\ d\theta  dr\\
	&\leq \sum_{n=0}^{\infty}  \int_{|\theta|<2^{-n}} \int_{1-2^{-n}}^{1-2^{-(n+1)}} |f(re^{i\theta}\xi)|^{p} (1-r)^{-1}\ dr \ d\theta\\
	& \leq\sum_{n=0}^{\infty} 2^{n+1}  \int_{|\theta|<2^{-n}} h_{n}((1-2^{-n-2})\xi e^{i\theta})\ d\theta\,, \\
	\end{align*}
	where
	$$h_n(z)=\int_{1-2^{-n}}^{1-2^{-(n+1)}} \left|f\left(\frac{rz}{1-2^{-n-2}}\right)\right|^p \ dr.$$
	For each $n$, the function $h_n$ is log-subharmonic (see \cite[p. 36]{Ronkin_1974}).
	 Bearing in mind  that 
	 $$|(1-2^{-n-2})\xi e^{i\theta}-\xi|\leq M  2^{-n-2}\,, \quad \text{for}\, \, |\theta|<2^{-n}\,,$$
	 where $M$ is an absolute constant, it follows that $(1-2^{-(n+2)})\xi e^{i\theta}\in \Gamma_{M}(\xi)$ and
	\begin{align*}
	\int_{0}^{1} \int_{|\theta|<1-r} |f(re^{i\theta}\xi)|^{p} (1-r)^{-1}\ d\theta \ dr \leq 4\sum_{n=0}^{\infty}  M_{\ast}h_{n}(\xi)
	\end{align*}
	where $M_{\ast}$ denotes the non-tangential maximal operator, that is 
	 $$M_{\ast}h_n(\xi)=\sup_{z\in \Gamma_{M}(\xi)} h_n(z)\,.$$
	
	Applying \cite[Theorem 1.8, p. 173]{Pavlovic}, we find a positive constant $C(p,q)$ such that 
	\begin{align*}
	&\left(\int_{\mathbb T} \left(\int_{\Lambda(\xi)} |f(w)|^{p}\ \frac{dA(w)}{1-|w|}\right)^{q/p} |d\xi| \right)^{1/q}
	 \leq \,\,\left(\int_{\T}\left(4\sum_{n=0}^{\infty}  M_{\ast}h_{n}(\xi)\right)^{q/p}\ |d\xi|\right)^{1/q}\\
	&\leq \,\, C(p,q)\sup_{0\leq s<1} \left(\int_{\T}\left(\sum_{n=0}^{\infty}  h_{n}(s\xi)\right)^{q/p}\ |d\xi|\right)^{1/q}\\
	&= \,\,C(p,q) \sup_{0\leq s<1} \left(\int_{\T}\left(\sum_{n=0}^{\infty}  \int_{1-2^{-n}}^{1-2^{-(n+1)}} \left|f\left(\frac{rs\xi}{1-2^{-n-2}}\right)\right|^p\ dr\right)^{q/p}\ |d\xi|\right)^{1/q}.
	\end{align*}
	Doing the change of variable $u=\frac{r}{1-2^{-(n+2)}}$, it follows that
	\begin{align*}
	&\left(\int_{\mathbb T} \left(\int_{\Lambda (\xi)} |f(w)|^{p}\ \frac{dA(w)}{1-|w|}\right)^{q/p} |d\xi| \right)^{1/q}\\
	&\leq \,\, C(p,q) \sup_{0\leq s<1}\left(\int_{\T}\left(\sum_{n=0}^{\infty} 2^{-n} \int_{1-2^{-n}}^{1-2^{-(n+2)}} \left|f\left(rs\xi\right)\right|^p \ dr\right)^{q/p}\ |d\xi|\right)^{1/q}\\
	&\leq \, 2\,\, C(p,q) \sup_{0\leq s<1}\rho_{p,q}(f_s).
	\end{align*}
	By \cite[Proposition 2.12]{Aguilar-Contreras-Piazza}, we conclude that
	\begin{align*}
	\left(\int_{\mathbb T} \left(\int_{\Lambda (\xi)} |f(w)|^{p}\ \frac{dA(w)}{1-|w|}\right)^{q/p} |d\xi| \right)^{1/q}\lesssim  \rho_{p,q}(f).
	\end{align*}
	
	$(2)$ \,Following the same argument, we obtain the equivalent result for the derivative.
	
\end{proof}

 The next theorem is due to Perälä \cite{Perala_2018}. It is the equivalent description of the   $AT_p^q (1)$ in terms of the derivative. 

\begin{theorem}\cite[Theorem 2, p. 9]{Perala_2018}\label{characterization 2}
Let $0<p,q,\alpha<+\infty$. Then, we have that
\begin{align*}
\|f'(\cdot)(1-|\cdot|)\|_{T_p^q(\alpha)}\asymp  \|f\|_{T_p^q(\alpha)},\ f\in\mathcal{H}(\D).
\end{align*}
\end{theorem}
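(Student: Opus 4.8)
This is \cite[Theorem 2]{Perala_2018}; I sketch the strategy I would follow. Since $f\equiv\mathrm{const}\neq0$ has $\|(1-|z|)f'(z)\|_{T_p^q(\alpha)}=0$ while $\|f\|_{T_p^q(\alpha)}\neq0$ (note $\int_{\Gamma(\xi)}(1-|z|^2)^{\alpha-2}\,dA(z)<+\infty$ precisely because $\alpha>0$), the statement is to be understood as the equivalence of the two quantities $\|f\|_{T_p^q(\alpha)}$ and $|f(0)|+\|(1-|z|)f'(z)\|_{T_p^q(\alpha)}$ on $AT_p^q(\alpha)$. The easy inequality $\|(1-|z|)f'(z)\|_{T_p^q(\alpha)}\lesssim\|f\|_{T_p^q(\alpha)}$ I would obtain directly: apply the sub-mean value property (Lemma~\ref{MVP} with $n=1$) to write $|f'(z)|^p(1-|z|)^p\lesssim(1-|z|)^{-2}\int_{D(z,r)}|f(w)|^p\,dm(w)$, plug this into the definition of $\|\cdot\|_{T_p^q(\alpha)}$, interchange the order of integration (each ball $D(z,r)$ is met by $\Gamma(\xi)$ only when $z$ lies in a fixed enlargement $\Gamma_{M_+}(\xi)$, and $1-|w|\asymp1-|z|$ on $D(z,r)$ by Remark~\ref{remarkestimatedistbound}), and close the estimate with the aperture-independence of the tent norm (Remark~\ref{independetregion}). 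The same local estimate gives $|f(0)|\lesssim\|f\|_{T_p^q(\alpha)}$.

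For the reverse inequality I would pass to the discrete model. By Proposition~\ref{sequence and tent}, $\|f\|_{T_p^q(\alpha)}\asymp\|\{\lambda_k\}\|_{T_p^q(Z)}$ with $\lambda_k=(1-|z_k|)^{\alpha/p}\sup_{\overline{D(z_k,r)}}|f|$; running the same proof with the subharmonic weight $|f'|^p$ in place of $|f|^p$ yields $\|(1-|z|)f'(z)\|_{T_p^q(\alpha)}\asymp\|\{\nu_k\}\|_{T_p^q(Z)}$ with $\nu_k=(1-|z_k|)^{1+\alpha/p}\sup_{\overline{D(z_k,r)}}|f'|$. So it suffices to prove $\|\{\lambda_k\}\|_{T_p^q(Z)}\lesssim|f(0)|+\|\{\nu_k\}\|_{T_p^q(Z)}$. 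I would join each $z_k$ to $0$ by a chain $0=w_0,w_1,\dots,w_m$ of lattice points with $\beta(w_i,w_{i+1})\le C$, $|w_i|$ increasing and $w_m$ the point of $\overline{D(z_k,r)}$ where $|f|$ is largest; then $\sup_{\overline{D(z_k,r)}}|f|\le|f(0)|+\sum_i|f(w_{i+1})-f(w_i)|\lesssim|f(0)|+\sum_i(1-|w_i|)\sup_{\overline{D(w_i,r')}}|f'|$, which gives the pointwise domination $\lambda_k\lesssim(1-|z_k|)^{\alpha/p}|f(0)|+\sum_j c_{kj}\,\nu_j$, where $c_{kj}\asymp(1-|z_k|)^{\alpha/p}(1-|z_j|)^{-\alpha/p}$ if $z_j$ lies on the chain from $0$ to $z_k$ and $c_{kj}=0$ otherwise. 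The first term is harmless because $\|\{(1-|z_k|)^{\alpha/p}\}\|_{T_p^q(Z)}<+\infty$ (again using $\int_{\Gamma(\xi)}(1-|z|^2)^{\alpha-2}\,dA<+\infty$ and the finite overlap of Proposition~\ref{Ncoverdiscs}), so everything reduces to the boundedness on $T_p^q(Z)$ of the positive ``radial-integration'' operator $\{\nu_j\}\mapsto\{\sum_j c_{kj}\nu_j\}$.

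To prove that operator bound I would organize the chains by dyadic level $m\asymp\log\frac{1}{1-|z_k|}$: for a fixed $\xi$ the chain to any $z_k\in\Gamma(\xi)$ stays inside a fixed enlarged cone $\Gamma_{M'}(\xi)$, it meets each level in $O(1)$ points, and the kernel decays geometrically, $c_{kj}\asymp2^{-(m-m_j)\alpha/p}$; summing the geometric series collapses the estimate on each cone to a one–dimensional averaging operator with summable kernel, and integration over $\xi$ then finishes. For $p,q\ge1$ this last step is a Schur/Muckenhoupt-type computation that can be phrased through the duality of $T_p^q(Z)$ (Propositions~\ref{Arsenovic1}--\ref{Luecking1}); for $p<1$ or $q<1$ one routes it through the factorization $T_p^q(Z)=T_{p_1}^{q_1}(Z)\cdot T_{p_2}^{q_2}(Z)$ of Proposition~\ref{FactorizationTentSequence} together with the synthesis operator of Proposition~\ref{Soperator}. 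The main obstacle is precisely this operator estimate: the weights $c_{kj}$ do \emph{not} satisfy a uniform Schur/Muckenhoupt condition, so one is forced to exploit the cone geometry together with the geometric decay in the radial direction, and to do so uniformly over the full range of $p,q,\alpha$; this is the technical heart of \cite[Theorem 2]{Perala_2018}, to which I would ultimately appeal for the remaining details.
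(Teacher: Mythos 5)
The paper offers no proof of this statement: it is quoted from \cite[Theorem 2]{Perala_2018}, so there is no internal argument to compare yours against, and your final appeal to that reference is exactly what the authors themselves do. That said, two points about your sketch are worth recording. First, your observation that the equivalence cannot hold as literally written is correct: for a nonzero constant $f$ one has $\|(1-|z|)f'(z)\|_{T_p^q(\alpha)}=0$ while $\|f\|_{T_p^q(\alpha)}>0$, since $\int_{\Gamma(\xi)}(1-|z|^2)^{\alpha-2}\,dA(z)\asymp\sum_n 2^{-n\alpha}<+\infty$ precisely because $\alpha>0$; the statement proved in \cite{Perala_2018} carries the extra term $|f(0)|$, and the paper's version should be read that way (the omission is harmless where the theorem is applied). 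Second, your easy direction --- Lemma~\ref{MVP} with $n=1$, the comparability $1-|w|\asymp 1-|z|$ on $D(z,r)$, finite overlap from Proposition~\ref{Ncoverdiscs}, and aperture independence --- is complete and is the standard argument.

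The reverse direction, however, remains a programme rather than a proof. The discretization via Proposition~\ref{sequence and tent} and the telescoping of $|f|$ along radial chains to the origin are sound, and you correctly identify both why a crude Schur test on the resulting kernel $c_{kj}\asymp 2^{-(m_k-m_j)\alpha/p}$ fails (too many points at depth $m_k$ sit above a fixed $z_j$) and what must replace it (each dyadic level contributes $O(1)$ chain points inside a fixed enlarged cone, so the cone-restricted operator is a one-dimensional convolution with a geometrically decaying, hence summable, kernel). But you do not carry out the resulting estimate uniformly in $0<p,q<\infty$ --- in particular the passage from the enlarged cone back to $\Gamma(\xi)$ after integrating in $\xi$, and the sub-$1$ exponent cases, are exactly where the work lies --- and you explicitly defer this to \cite{Perala_2018}. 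As a self-contained proof the proposal is therefore incomplete; as a justification for the citation, which is all the paper provides, it is adequate and the intended mechanism is correctly described.
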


Combining the above we get  the following corollary.

\begin{corollary}
	Let $1\leq p,q<+\infty$. Then, we have that
\begin{align*}
\rho_{p,q}(f)\asymp  \rho_{p,q}(f'(\cdot)(1-|\cdot|)),\ f\in\mathcal{H}(\D).
\end{align*}
\end{corollary}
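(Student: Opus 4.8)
The plan is to derive the corollary by chaining together the two results immediately preceding it, namely Proposition~\ref{non_tangential_char_without_der} and Theorem~\ref{characterization 2}. Fix $1\le p,q<+\infty$ and $f\in\mathcal H(\D)$. Part (1) of Proposition~\ref{non_tangential_char_without_der} gives $\rho_{p,q}(f)\asymp\|f\|_{T_p^q(1)}$. Applying Theorem~\ref{characterization 2} with $\alpha=1$ gives $\|f\|_{T_p^q(1)}\asymp\|f'(z)(1-|z|)\|_{T_p^q(1)}$. It then remains to identify $\|f'(z)(1-|z|)\|_{T_p^q(1)}$ with $\rho_{p,q}(f'(z)(1-|z|))$, and this is exactly part (2) of Proposition~\ref{non_tangential_char_without_der}, which states $\rho_{p,q}(f'(z)(1-|z|))\asymp\|f'(w)(1-|w|)\|_{T_p^q(1)}$. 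Concatenating the three equivalences yields $\rho_{p,q}(f)\asymp\rho_{p,q}(f'(z)(1-|z|))$, with implied constants depending only on $p$ and $q$.

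Concretely, I would write: by Proposition~\ref{non_tangential_char_without_der}(1),
\begin{align*}
\rho_{p,q}(f)\asymp \|f\|_{T_p^q(1)};
\end{align*}
by Theorem~\ref{characterization 2} applied with $\alpha=1$,
\begin{align*}
\|f\|_{T_p^q(1)}\asymp \|f'(z)(1-|z|)\|_{T_p^q(1)};
\end{align*}
and by Proposition~\ref{non_tangential_char_without_der}(2),
\begin{align*}
\|f'(z)(1-|z|)\|_{T_p^q(1)}\asymp \rho_{p,q}(f'(z)(1-|z|)).
\end{align*}
Combining these gives the claim.

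Since this is a three-line deduction from results already in hand, there is essentially no substantive obstacle. The one point that warrants a sentence of care is making sure the quantity to which Theorem~\ref{characterization 2} is applied is literally the one appearing in Proposition~\ref{non_tangential_char_without_der}(2): Theorem~\ref{characterization 2} is phrased for $\|f'(z)(1-|z|)\|_{T_p^q(\alpha)}$ as a measurable (indeed analytic-derivative-weighted) function on $\D$, and the tent-space quantity in Proposition~\ref{non_tangential_char_without_der}(2) is the same object with $\alpha=1$, so the match is exact. No restriction beyond $1\le p,q<+\infty$ is needed, since that is precisely the range covered by Proposition~\ref{non_tangential_char_without_der}, and Theorem~\ref{characterization 2} holds for all $0<p,q,\alpha<+\infty$. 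Hence the corollary follows immediately.
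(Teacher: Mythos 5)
Your proposal is correct and is exactly the argument the paper intends: the corollary is stated immediately after Theorem~\ref{characterization 2} with the remark ``Combining the above we get the following corollary,'' i.e.\ chaining Proposition~\ref{non_tangential_char_without_der}(1), Theorem~\ref{characterization 2} with $\alpha=1$, and Proposition~\ref{non_tangential_char_without_der}(2). Your attention to the fact that the tent-space quantity in Proposition~\ref{non_tangential_char_without_der}(2) matches the one in Theorem~\ref{characterization 2} is the only point of care, and you handle it correctly.
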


\section{Carleson measures}
This section is devoted to the study of Carleson measure type problems for the tent spaces of analytic functions  and  consecuentely for the $RM(p,q)$ spaces. 
First, we present our main result. It answers the problem posed by Luecking in \cite{Luecking}. See section $7$.

\begin{theorem}\label{theoremCarlesonTent}
	Let $0< p,q,s,t, \alpha<+\infty$,  $M>1/2$, $Z=\{z_k\}$ an $(r,\kappa)$-lattice, and let $\mu$ be a positive Borel measure on $\D$. Then the following are equivalent:
	\begin{enumerate}
		\item There is a constant $C>0$ such that 
		\begin{align*}
		\left(\int_{\T} \left(\int_{\Gamma (\xi)} |f(w)|^{t}\ d\mu(w)\right)^{s/t}\ |d\xi|\right)^{1/s}\leq C \|f\|_{T_{p}^{q}(\alpha)}, \quad f\in AT_{p}^{q}(\alpha).
		\end{align*}
		\item The measure $\mu$ satisfies the following:
		\begin{enumerate}
			\item If $0< s<q<+\infty$, $0< t<p<+\infty$, then
			\begin{align*}
			\int_{\T} \left(\sum_{z_k\in \Gamma (\xi)} \left(\frac{\mu^{1/t}(D(z_k,r))}{(1-|z_k|)^{\alpha/p}}\right)^{\frac{pt}{p-t}} \right)^{\frac{(p-t)qs}{(q-s)pt}}\ |d\xi|<+\infty.
			\end{align*}
			\item If $0< s< q<+\infty$, $0< p\leq t<+\infty$, then
			\begin{align*}
			\int_{\T} \left(\sup_{z_k\in \Gamma (\xi)} \frac{\mu^{1/t}(D(z_k,r))}{(1-|z_k|)^{\alpha/p}} \right)^{\frac{qs}{q-s}}\ |d\xi|<+\infty.
			\end{align*}
			\item If $0< q< s<+\infty$, $0< p,t<\infty$ or $0< q=s<+\infty$, $0< p\leq t<+\infty$, then
			\begin{align*}
			\sup_{k} \frac{\mu^{1/t}(D(z_k,r))}{(1-|z_k|)^{\frac{\alpha}{p}+\frac{1}{q}-\frac{1}{s}}}<+\infty.
			\end{align*}
			\item If $0< q=s<+\infty$, $0< t< p<+\infty$, then
			\begin{align*}
			\sup_{\xi\in\T} \left(\sup_{\xi\in I}  \sum_{z_{k}\in T(I)} \left(\frac{\mu^{1/t}(D(z_k,r))}{(1-|z_k|)^{\alpha/p}}\right)^{\frac{pt}{p-t}} (1-|z_k|)\  \right)^{\frac{p-t}{pt}}<+\infty,
			\end{align*}
			where $I$ runs the intervals in $\T$, $S(I)=\left\{z\in\D\ :\ 1-|I|\leq |z|<1\ \text{and}\ \frac{z}{|z|}\in I\right\}$ and $|I|$ is its arc length.
		\end{enumerate}
	\end{enumerate}
\end{theorem}

\begin{proof}
	First, we will prove that \emph{(2)} implies \emph{(1)}.  Let $b>\max\{\frac{1}{t},\frac{1}{s}\}$.
 By Lemma~\ref{Wu}, one can take a constant $M_{+}>M>\frac 12 $ such that, for all $\xi\in\T$,
	$$
	\bigcup_{D(z_{k},r)\cap \Gamma_{M} (\xi)\neq \emptyset } D(z_k,r) \subset \Gamma_{M_{+}}(\xi)
	$$
	where  
	$D(z_k,r)$ are hyperbolic disks that correspond to the $(r,\kappa)$-lattice.\\
		\\
	Fix an $f\in AT_{p}^{q}(\alpha)$. Using the pointwise estimate $|f(z)|\leq \sum_{k} |f(z)|\chi_{D(z_k,r)}(z)$, it follows that
	\begin{align*}
&\left( \int_{\T} \left(\int_{\Gamma_{M}(\xi)} |f(w)|^{t}\ d\mu(w)\right)^{s/t}\ |d\xi| \right)^{\frac {1}{s}}
	\leq \left( \int_{\T} \left(\sum_{k} \int_{D(z_k,r)\cap \Gamma_{M} (\xi)}|f(w)|^{t} \ d\mu(w)\right)^{s/t}\ |d\xi|\right)^{\frac {1}{s}}\\
	&\quad\leq \left( \int_{\T} \left(\sum_{z_k\in \Gamma_{M_+}(\xi)} \, \sup_{w\in \overline{D(z_k,r)} }|f(w)|^{t} \mu (D(z_k,r))\ d\mu(w)\right)^{s/t}\ |d\xi|\right)^{\frac {1}{s}}.
	\end{align*}
	For the simplicity of the presentation we set $|f_k|:=\sup_{w\in \overline{D(z_k,r)}} |f(w)|$.  Bearing in mind Lemma~\ref{estimate 1} and $b>\max\{\frac{1}{t},\frac{1}{s}\}$, we have
	\begin{align}\label{ineq1}
	&\left(\int_{\T} \left(\int_{\Gamma_{M} (\xi)} |f(w)|^{t}\ d\mu(w)\right)^{s/t}\ |d\xi| \right)^{\frac {1}{s}}\\\nonumber
	 &\quad\leq \left(\int_{\T} \left(\sum_{z_k\in\Gamma_{M_+}(\xi)} \,|f_k|^{t} \mu (D(z_k,r))\ d\mu(w)\right)^{s/t}\ |d\xi| \right)^{\frac {1}{s}}\\\nonumber
	& \quad = \left(\int_{\T} \left(\sum_{z_k\in \Gamma_{M_+}(\xi)} \left(|f_k|^{1/b}\ \mu^{1/bt}(D(z_k,r)) \right)^{bt}\right)^{bs/bt}\ |d\xi|\right)^{\frac{b}{bs}}\\\nonumber
	&\quad \asymp \|\{|f_k|^{1/b}\, \mu^{1/bt} (D(z_k,r))\}\|_{T^{bs}_{bt}(Z)}^ b\,.\\\nonumber
	\end{align}
	 \\
	Using the duality relation for the $T^{bs}_{bt}(Z)$ as stated in Proposition~\ref{Arsenovic1}, we get that 
	\begin{align}\label{ineq2}
	&\|\{|f_k|^{1/b}\, \mu^{1/bt} (D(z_k,r))\}\|_{T^{bs}_{bt}(Z)}\\
	 &\quad= \sup_{\substack{\lambda_{k}\geq 0\\\nonumber \|\{\lambda_{k}\}\|_{T_{(bt)'}^{(bs)'}(Z)}\leq 1}}\sum_{k} \lambda_{k} |f_k|^{1/b}\,\mu^{1/bt}(D(z_k,r))\, (1-|z_k|)\\\nonumber
	& \quad=\sup_{\substack{\lambda_{k}\geq 0\\ \|\{\lambda_{k}\}\|_{T_{(bt)'}^{(bs)'}(Z)}\leq 1}}\sum_{k} \lambda_{k} |f_k|^{1/b} (1-|z_k|)^{\alpha /bp} \frac{\mu^{1/bt}(D(z_k,r))}{(1-|z_k|)^{\alpha /bp}} (1-|z_k|).
	\end{align}
	
	By Remark~\ref{sequence and tent}, we have
	\begin{align*}
	\|f \|_{ T^{q}_{p} (\alpha)} \asymp \|\{|f_k| (1-|z_k|)^{\alpha/p}\}\|_{T_{p}^{q}(Z)}= \|\{|f_k|^{1/b} (1-|z_k|)^{\alpha/bp}\}\|^b_{T_{bp}^{bq}(Z)}\,.
	\end{align*}
	Therefore, for any  sequence of positive numbers $\{ \lambda_k \} \in T_{(bt)'}^{(bs)'}(Z)$,
	 Proposition~\ref{FactorizationTentSequence} implies that
	  $$
	  \{ \lambda_{k} |f_k|^{1/b} (1-|z_k|)^{\alpha/bp} \}\in T_{(bt)'}^{(bs)'}(Z)\cdot T_{bp}^{bq}(Z)= T_{\frac{pbt}{t+p(bt-1)}}^{\frac{qbs}{s+q(bs-1)}}(Z),
	  $$ 
	  
	Taking into account the conditions \emph{(2)(a)-(d)} together with Propositions~\ref{Arsenovic1}--\ref{Luecking1} respectively, we obtain that there is a constant $C(\mu)>0$ such that
	\begin{align}\label{ineq3}
	&\left(\sum_{k} \lambda_{k} |f_k|^{1/b} (1-|z_k|)^{\alpha /bp} \frac{\mu^{1/bt}(D(z_k,r))}{(1-|z_k|)^{\alpha /bp}} (1-|z_k|)\right)^{b}\\\nonumber
	&\quad\leq C(\mu)^{b} \|  \{\lambda_{k} |f_k|^{1/b} (1-|z_k|)^{\alpha/bp}\}\|_{T_{\frac{pbt}{t+p(bt-1)}}^{\frac{qbs}{s+q(bs-1)}}(Z)}^{b}\\\nonumber
	&\quad\lesssim C(\mu)^{b} \,\| \{\lambda_k\} \|_{T_{(bt)'}^{(bs)'}(Z)}^{b}\, \|\{|f_k|^{1/b} (1-|z_k|)^{\alpha/bp}\}\|^b_{T_{bp}^{bq}(Z)}\\\nonumber
	&\quad = C(\mu)^{b} \,\| \{\lambda_k\} \|_{T_{(bt)'}^{(bs)'}(Z)}^{b}\,  \|\{|f_k| (1-|z_k|)^{\alpha/p}\}\|_{T_{p}^{q}(Z)}\\\nonumber
	& \quad \lesssim C(\mu)^{b} \,\| \{\lambda_k\} \|_{T_{(bt)'}^{(bs)'}(Z)}^{b}\,  \|f \|_{ AT^{q}_{p} (\alpha)}\,.\nonumber
	\end{align}
	Now, we proceed to clarify which is the constant $C(\mu)$ in each of the cases of the statement \emph{(2)}.
	\begin{enumerate}
		\item[(a)]  By Proposition~\ref{Arsenovic1}, it follows that $$C(\mu)\asymp\left\|\left\{\frac{\mu^{1/bt}(D(z_k,r))}{(1-|z_k|)^{\alpha /bp}}\right\}\right\|_{T_{\frac{bpt}{p-t}}^{\frac{bqs}{q-s}}(Z)}=\left\|\left\{\frac{\mu^{1/t}(D(z_k,r))}{(1-|z_k|)^{\alpha /p}}\right\}\right\|_{T_{\frac{pt}{p-t}}^{\frac{qs}{q-s}}(Z)}^{1/b}.$$
		\item[(b)] Using Proposition~\ref{Arsenovic1} for the cases $s<q$, $p=t$ and Proposition~\ref{Arsenovic2} for the case $s<q$, $p<t$ we have that
		$$C(\mu)\asymp\left\|\left\{\frac{\mu^{1/bt}(D(z_k,r))}{(1-|z_k|)^{\alpha /bp}}\right\}\right\|_{T_{\infty}^{\frac{bqs}{q-s}}(Z)}=\left\|\left\{\frac{\mu^{1/t}(D(z_k,r))}{(1-|z_k|)^{\alpha /p}}\right\}\right\|_{T_{\infty}^{\frac{qs}{q-s}}(Z)}^{1/b}.$$
		\item[(c)] By means of Proposition~\ref{Luecking1}, the constant $C(\mu)$ in these cases is determined by
		$$C(\mu)\asymp\sup_{k}\frac{\mu^{1/bt}(D(z_k,r))}{(1-|z_k|)^{\alpha /bp}} (1-|z_k|)^{\frac{q-s}{qbs}}=\left(\sup_{k} \frac{\mu^{1/t}(D(z_k,r))}{(1-|z_k|)^{\frac{\alpha}{p}+\frac{1}{q}-\frac{1}{s}}}\right)^{1/b}.$$
		\item[(d)] In the remaining cases, we apply Proposition~\ref{Jevtic} to obtain
		$$C(\mu)\asymp \left\|\left\{\frac{\mu^{1/bt}(D(z_k,r))}{(1-|z_k|)^{\alpha /bp}}\right\}\right\|_{T_{\frac{bpt}{p-t}}^{\infty}(Z)}=\left\|\left\{\frac{\mu^{1/t}(D(z_k,r))}{(1-|z_k|)^{\alpha /p}}\right\}\right\|_{T_{\frac{pt}{p-t}}^{\infty}(Z)}^{1/b}.$$
 	\end{enumerate}
	
	Therefore, combining \eqref{ineq1}, \eqref{ineq2}, and \eqref{ineq3}, we conclude that
	\begin{align*}
	\left(\int_{\T} \left(\int_{\Gamma (\xi)} |f(w)|^{t}\ d\mu(w)\right)^{s/t}\ |d\xi|\right)^{1/s} \lesssim \|f\|_{T_{p}^{q}(\alpha)}.
	\end{align*}
	
	Now, we continue with the proof of \emph{(1)} implies \emph{(2)}. By Lemma~\ref{Wu}, there is a constant $M_\ast>M>1/2$ such that $D(z,r)\subset \Gamma_{M_\ast}(\xi)$ for all $z\in \Gamma_M(\xi)$.  By hypothesis and Lemma~\ref{estimate 1} there is a constant $C>0$ such that 
	\begin{align}\label{hypo1}
	\left(\int_{\T} \left(\int_{\Gamma_{M_\ast} (\xi)} |f(z)|^{t}\ d\mu(z)\right)^{s/t}\ |d\xi|\right)^{1/s}\leq \,C \,\|f\|_{T_{p}^{q}(\alpha)}
	\end{align}
	for all $f\in AT_{p}^{q}(\alpha)$. Take  $\lambda=\{\lambda_k\}$ a sequence  of $T_{p}^{q}(Z)$ and let $\rho_k: [0,1]\rightarrow \{-1,1\}$ the Radermacher functions.
	Moreover, for $K>\max\left\{1,p/q,1/q,1/p\right\}+\alpha/p$,  we consider the function
	\begin{align*}
	F_x(z):=\sum_{k} \lambda_{k} \rho_k(x) \frac{(1-|z_k|)^{K-\frac{\alpha}{p}}}{(1-\overline{z_k}z)^{K}}\,\,, \quad z\in\mathbb D.
	\end{align*}
	From \eqref{hypo1} and Proposition~\ref{Soperator}, it follows that
	\begin{align*}
	\int_{\T} \left(\int_{\Gamma_{M_\ast} (\xi)} |F_x(z)|^{t}\ d\mu(z)\right)^{s/t}\ |d\xi|\lesssim \|\{\lambda_{k}\}\|_{T_{p}^{q}(Z)}^{s}.
	\end{align*}
	Integrating both sides with respect to $x$, we obtain
	\begin{align*}
	\int_{0}^{1}\int_{\T} \left(\int_{\Gamma_{M_\ast}  (\xi)} |F_x(z)|^{t}\ d\mu(z)\right)^{s/t}\ |d\xi|\ dx\lesssim \|\{\lambda_{k}\}\|_{T_{p}^{q}(Z)}^{s}.
	\end{align*}
	Applying Fubini's theorem and Khinchine-Kahane-Kalton inequality (see \cite[Theorem 2.1, p. 251]{Kalton}), it follows that
	\begin{align*}
	\int_{\T} \left(\int_{0}^{1}\int_{\Gamma_{M_\ast}  (\xi)} |F_x(z)|^{t}\ d\mu(z)\ dx\right)^{s/t}\ |d\xi|\lesssim \|\{\lambda_{k}\}\|_{T_{p}^{q}(Z)}^{s}.
	\end{align*}
	Now, by means of Khinchine's inequality we have
	\begin{align*}
	\int_{\T} \left(\int_{\Gamma_{M_\ast}  (\xi)} \left(\sum_k |\lambda_k|^{2}  \frac{(1-|z_k|)^{2 K-\frac{2\alpha}{p}}}{|1-\overline{z_k}z|^{2 K}}\right)^{t/2}\ d\mu(z)\right)^{s/t}\ |d\xi|\lesssim \|\{\lambda_{k}\}\|_{T_{p}^{q}(Z)}^{s}.
	\end{align*}
	It holds that
	$$\chi_{D(z_k,r)}(z)\lesssim \frac{1-|z_k|}{|1-\overline{z_k}z|} \,, \quad z\in \D\,.$$ 
	In addition,  each $z\in \D$ belongs to not more than  $N=N(r)$  disks $D(z_k,r)$ (see Proposition~\ref{Ncoverdiscs}). Using the fact that 
	$$\|w\|_{t}\leq \max\{1,N^{1/t-1/2}\}\|w\|_{2}$$ for $w\in\C^{N}$ and, combining it with the estimation above, one can see that
	\begin{align*}
	&\int_{\T} \left(\int_{\Gamma_{M_\ast}(\xi)} \sum_k |\lambda_k|^{t}  \frac{\chi_{D(z_k,r)}(z)}{(1-|z_k|)^{\alpha t/p}}\ d\mu(z)\right)^{s/t}\ |d\xi|\\
	&\leq \max\{1,N^{s/t-s/2}\}\,
	\int_{\T} \left(\int_{\Gamma_{M_\ast} (\xi)} \left(\sum_k |\lambda_k|^{2}  \frac{\chi_{D(z_k,r)}(z)}{(1-|z_k|)^{2\alpha/p}}\right)^{t/2}\ d\mu(z)\right)^{s/t}\ |d\xi|\\
	&\lesssim \, \max\{1,N^{s/t-s/2}\}\, \int_{\T} \left(\int_{\Gamma_{M_\ast} (\xi)} \left(\sum_k |\lambda_k|^{2}  \frac{(1-|z_k|)^{2\theta-\frac{2\alpha}{p}}}{|1-\overline{z_k}z|^{2\theta}}\right)^{t/2}\ d\mu(z)\right)^{s/t}\ |d\xi|\\
	&\lesssim \, \max\{1,N^{s/t-s/2}\}\, \|\{\lambda_{k}\}\|_{T_{p}^{q}(Z)}^{s}.
	\end{align*}
	Hence, it follows that 
	\begin{align*}
	\int_{\T} \left( \sum_{z_k\in\Gamma_{M}(\xi)} |\lambda_k|^{t}  \frac{\mu(D(z_k,r))}{(1-|z_k|)^{\alpha t/p}}\ \right)^{s/t}\ |d\xi|\lesssim \max\{1,N^{s/t-s/2}\} \|\{\lambda_{k}\}\|_{T_{p}^{q}(Z)}^{s}.
	\end{align*}
	
	On the other hand, let a $b$  such that $tb> 1$ and $sb> 1$. For any $\{ \tau_k\}_k \in T_{\frac{bt}{bt-1}}^{\frac{bs}{bs-1}}(Z)$, after applying Fubini's
	theorem and Hölder's inequality twice, one gets that
	\begin{align}\label{pairtentcarlesonmeasure}
	\sum_{k} & \tau_k \lambda_{k}^{1/b} \left(\frac{\mu^{1/t}(D(z_k,r))}{(1-|z_k|)^{\alpha/p}}\right)^{1/b} (1-|z_k|)\\\nonumber
	&\asymp \int_{\T} \left(\sum_{z_k\in\Gamma^{-}(\xi)} \tau_k \lambda_{k}^{1/b} \left(\frac{\mu^{1/t}(D(z_k,r))}{(1-|z_k|)^{1/p}}\right)^{1/b}\right) |d\xi|\\
	&\lesssim \int_{\T} \left(\sum_{z_k\in \Gamma^{-}(\xi)} |\lambda_k|^{t}  \frac{\mu(D(z_k,r))}{(1-|z_k|)^{\alpha t/p}}\right)^{1/tb}\left(\sum_{z_k\in \Gamma^{-}(\xi)} \tau_k^{\frac{bt}{bt-1}} \right)^{\frac{bt-1}{bt}}\ |d\xi|\nonumber\\
	&\leq \left(\int_{\T}\left(\sum_{z_k\in \Gamma^{-}(\xi)} |\lambda_k|^{t}  \frac{\mu(D(z_k,r))}{(1-|z_k|)^{\alpha t/p}}\right)^{s/t} |d\xi|\right)^{1/bs} \|\tau_k\|_{T_{\frac{bt}{bt-1}}^{\frac{bs}{bs-1}}(Z)}\nonumber\\
	& \lesssim \max\{1,N^{\frac{s}{tb}-\frac{t}{2b}}\} \|\{\lambda_{k}\}\|_{T_{p}^{q}(Z)}^{1/b} \|\{\tau_k\}\|_{T_{\frac{bt}{bt-1}}^{\frac{bs}{bs-1}}(Z)}.\nonumber
	\end{align}
	
	By Proposition~\ref{FactorizationTentSequence} we have that $$\{\tau_k \lambda_k^{1/b}\}_k\in T_{\frac{bpt}{t+p(bt-1)}}^{\frac{bqs}{s+q(bs-1)}}(Z)=T_{\frac{bt}{bt-1}}^{\frac{bs}{bs-1}}(Z)\cdot T_{bp}^{bq}(Z).$$ Then, using \eqref{pairtentcarlesonmeasure} together with Propositions~\ref{Arsenovic1}--\ref{Luecking1} respectively, we obtain that \emph{(2)} holds. Therefore, we are done.
\end{proof}

 The  last part of the section is about Theorem $3$ of \cite{Luecking}. There, Luecking considered the embeddings of  
 the derivatives of analytic tent spaces $AT^q_{p}(\alpha)$  in the $L^s$ space of a positive Borel measure $\mu$. The setting was the upper half plane and 
 the technique he employed was the  discretization  of the problem, too.
 Nevertheless, the same proof  can be applied to the unit disc.   Below we state  Luecking's  result for the latter case. 
  \begin{theorem}{\cite[Theorem 3, p. 354]{Luecking}}
	Let $0<s,p,q<+\infty$ such that if $s=q$ then $p\leq q$, $0<\alpha<+\infty$, $Z=\{z_{i,j}\}$ the $(r,\kappa)$-lattice consisting of the centers of the Luecking regions, and $n\in\N\cup\{0\}$. Let $\mu$  be a finite positive Borel measure on $\D$. Then the following assertions are equivalent.
	\begin{enumerate}
		\item There is a constant $C>0$ such that
		\begin{align*}
		\left(\int_{\D} |f^{(n)}(w)|^{s}\ d\mu(w)\right)^{1/s}\leq C\|f\|_{T_{p}^{q}(\alpha)}, \quad f\in AT_{p}^{q}(\alpha).
		\end{align*}
		\item The sequence
		\begin{align*}
		\mu_{i,j}:= \mu(R_{i,j})(1-|z_{i,j}|)^{-\frac{s}{p}-sn-1},
		\end{align*}
		where $R_{i,j}$ is a Luecking region (see Example~\ref{LueckingCenters}) and $z_{i,j}$ is the center of this region, satisfies one of the following:
		\begin{enumerate}
			\item If $s<p,q$, then $\{\mu_{i,j}\}\in T_{\frac{p}{p-s}}^{\frac{q}{q-s}}(Z)$.
			\item If $p\leq s<q$, then $\{\mu_{i,j}\}\in T_{\infty}^{\frac{q}{q-s}}(Z)$.
			\item If $q< s$ or $p\leq s=q$ , then $\{\mu_{i,j}(1-|z_{i,j}|)^{1-\frac{s}{q}}\}$ is a bounded sequence. 
		\end{enumerate}
	\end{enumerate}
\end{theorem}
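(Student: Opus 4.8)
The plan is to adapt Luecking's half-plane argument to the disc by running the same discretisation scheme that underlies Theorem~\ref{theoremCarlesonTent}, now with $f^{(n)}$ in place of $f$, and to exploit that the right-hand side here is an ordinary $L^s(\mu)$-norm rather than a tent norm: this means a single duality step at the sequence level, with no factorisation (Proposition~\ref{FactorizationTentSequence}) and no auxiliary exponent $b$ needed. Concretely, with $Z=\{z_k\}=\{z_{i,j}\}$ the lattice of Luecking centres, both implications reduce to the one estimate
\[
\sum_k \mu(R_k)\,(1-|z_k|)^{-\tfrac{\alpha s}{p}-sn}\,|\lambda_k|^s\ \lesssim\ \|\{\lambda_k\}\|_{T_p^q(Z)}^{s}\qquad\text{for all }\{\lambda_k\}\in T_p^q(Z).
\]

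For (1)$\Rightarrow$(2) I would test (1) on the randomised functions $F_x(z)=\sum_k \lambda_k\rho_k(x)(1-|z_k|)^{K-\alpha/p}(1-\overline{z_k}z)^{-K}$ with $K$ large, which lie in $AT_p^q(\alpha)$ with norm $\lesssim\|\{\lambda_k\}\|_{T_p^q(Z)}$ by Proposition~\ref{Soperator}; integrating in $x$ and applying Fubini together with the Khinchine--Kahane--Kalton and Khinchine inequalities — exactly as in the second half of the proof of Theorem~\ref{theoremCarlesonTent} — gives the displayed estimate, the only new feature being that $n$-fold differentiation of the kernels contributes an extra factor $(1-|z_k|)^{-n}$ on $D(z_k,r)$, which is the source of the $-sn$ shift. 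For (2)$\Rightarrow$(1) I would discretise the left side: write $\int_{\D}|f^{(n)}|^s\,d\mu=\sum_{i,j}\int_{R_{i,j}}|f^{(n)}|^s\,d\mu$, bound $\sup_{R_{i,j}}|f^{(n)}|\lesssim(1-|z_{i,j}|)^{-n}\sup_{D(z_{i,j},Cr)}|f|$ via Cauchy's estimates and the sub-mean value property (Lemma~\ref{MVP}) on a fixed hyperbolic enlargement, and then invoke Proposition~\ref{sequence and tent} to replace $\{(\sup_{D(z_{i,j},Cr)}|f|)(1-|z_{i,j}|)^{\alpha/p}\}$ by a sequence of $T_p^q(Z)$-norm comparable to $\|f\|_{T_p^q(\alpha)}$; this returns (1) to the same sequence estimate.

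It then remains to characterise boundedness of $\{\lambda_k\}\mapsto\sum_k\mu_k|\lambda_k|^s$ on $T_p^q(Z)$, where $\mu_k=\mu(R_k)(1-|z_k|)^{-\alpha s/p-sn}$. Setting $b_k=|\lambda_k|^s$, the sequence $\{b_k\}$ runs over $T_{p/s}^{q/s}(Z)$ with $\|\{b_k\}\|_{T_{p/s}^{q/s}(Z)}=\|\{\lambda_k\}\|_{T_p^q(Z)}^s$, and $\sum_k\mu_k b_k=\sum_k\mu_{i,j}\,b_k\,(1-|z_k|)$ with $\mu_{i,j}=\mu_k/(1-|z_k|)$ (the extra $-1$ in the exponent of $\mu_{i,j}$ is exactly the pairing measure $(1-|z_k|)$); so the estimate says precisely that $\{\mu_{i,j}\}$ lies in the dual of $T_{p/s}^{q/s}(Z)$ described by Propositions~\ref{Arsenovic1}--\ref{Luecking1}. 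Splitting by the sizes of $p/s,q/s$: if $s<p,q$ then $p/s,q/s>1$ and Proposition~\ref{Arsenovic1} yields $\{\mu_{i,j}\}\in T_{p/(p-s)}^{q/(q-s)}(Z)$ (case (a)); if $p\le s<q$ then $p/s\le1<q/s$ and Propositions~\ref{Arsenovic1}--\ref{Arsenovic2} (for $p=s$, resp.\ $p<s$) yield $\{\mu_{i,j}\}\in T_\infty^{q/(q-s)}(Z)$ (case (b)); if $q<s$, or $s=q$ with $p\le q$, then $q/s<1$, resp.\ $q/s=1$ with $p/s\le1$, so Proposition~\ref{Luecking1} applies and gives $\sup_k|\mu_{i,j}|(1-|z_k|)^{1-s/q}<\infty$ (case (c)). The hypothesis ``$p\le q$ whenever $s=q$'' is exactly what keeps the last range inside the scope of Proposition~\ref{Luecking1} and away from the configuration $q/s=1<p/s$, which would instead require $(T_{p/s}^1(Z))^\ast\cong T_{(p/s)'}^\infty(Z)$ (Proposition~\ref{Jevtic}) and is the case left open here.

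The main obstacle, both already present in Theorem~\ref{theoremCarlesonTent}, is the discretisation bookkeeping: tracking the powers of $(1-|z_k|)$ produced by differentiating $F_x$ (hence pinning down the exponent $-\alpha s/p-sn$), checking that enlarging the Luecking regions $R_{i,j}$ keeps the bounded-overlap and ``contained in a fixed Stolz region'' properties used in Proposition~\ref{sequence and tent} and in the covering argument — this is where Lemma~\ref{lemma_N_luecking}, Proposition~\ref{Ncoverdiscs} and \cite[Lemma 2.3]{Wu_2006} enter — and handling the borderline indices $p=s$ and $q=s$, where the duality statements change form. Once the reduction to the sequence level is in place, the case analysis is routine tracking of exponents.
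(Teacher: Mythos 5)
Your proposal is correct and follows essentially the same route the paper itself takes for this circle of results: discretisation over the Luecking lattice via the sub-mean value property, Rademacher test functions combined with Proposition~\ref{Soperator} and Khinchine's inequality, and a single sequence-space duality step using Propositions~\ref{Arsenovic1}, \ref{Arsenovic2} and \ref{Luecking1}; you also correctly pinpoint that the excluded configuration $s=q<p$ is the one falling outside Proposition~\ref{Luecking1} and requiring Proposition~\ref{Jevtic}, which is precisely the case the paper treats separately. One minor remark: the exponent $-\alpha s/p-sn-1$ you derive is the homogeneity-correct one, while the statement's $-s/p-sn-1$ agrees with it only for $\alpha=1$ (the paper's own proof of the $s=q<p$ case likewise works with $\alpha s/p+ns+1$).
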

Although the cases stated above as $2(a),2(b),2(c)$ are covered completely by Theorem $3$ in \cite{Luecking},
 looking carefully at the original proof given by Luecking we realize that the case when $0<s=q <p<\infty$
has to be clarified. To be more specific, in  \cite{Luecking} this range of values 
  is also treated  by the application of Proposition $2$ of  \cite{Luecking}. Here we make clear that this case has to be confronted 
  separetely using  the proper duality relation.  When $0<s=q <p$ 
  this duality  is the one stated in Proposition ~\ref{Jevtic}.
  In order to keep the spirit of
  the proof  as it appears in \cite{Luecking} we make use of Luecking's regions.

\begin{theorem}
Let $0<s <p<+\infty$, $0<\alpha<+\infty$, $Z=\{z_{i,j}\}$ the $(r,\kappa)$-lattice consisting of the centers of the Luecking regions, and $n\in\N\cup\{0\}$. Let $\mu$  be a finite positive Borel measure on $\D$. Then, there is a constant $C>0$ such that
	\begin{align}\label{lueckingd}
	\left(\int_{\D} |f^{(n)}(w)|^{s}\ d\mu(w)\right)^{1/s}\leq C\|f\|_{T_{p}^{s}(\alpha)}, \quad f\in AT_{p}^{s}(\alpha),
	\end{align}
 if and only if
 $\left\{\mu(R_{i,j})(1-|z_{i,j}|)^{-\frac{s}{p}-sn-1}\right\}\in T_{\frac{p}{p-s}}^{\infty}(Z)$.
\end{theorem}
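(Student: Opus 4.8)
The plan is to run Luecking's discretization argument for Theorem~3 of \cite{Luecking}, replacing, at the single place where a duality for sequence tent spaces is invoked, his Proposition~2 (our Proposition~\ref{Luecking1}) by the duality $(T_{p/s}^{1}(Z))^{\ast}\cong T_{p/(p-s)}^{\infty}(Z)$ of Proposition~\ref{Jevtic}. This is precisely the correction announced before the statement: since $0<s<p$ we have $1<p/s<+\infty$, so Proposition~\ref{Jevtic} applies, whereas Proposition~\ref{Luecking1}---whose hypotheses exclude a first index larger than $1$ together with second index equal to $1$---does not. Throughout write $\mu_{i,j}:=\mu(R_{i,j})(1-|z_{i,j}|)^{-s/p-sn-1}$ for the sequence in the statement, the $R_{i,j}$ being the Luecking regions of Example~\ref{LueckingCenters} and $z_{i,j}$ their centres.

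\emph{Sufficiency.} Suppose $\{\mu_{i,j}\}\in T_{p/(p-s)}^{\infty}(Z)$ and fix $f\in AT_{p}^{s}(\alpha)$. Put $\lambda_{i,j}:=\sup_{w\in\overline{D(z_{i,j},r)}}|f^{(n)}(w)|\,(1-|z_{i,j}|)^{n+\alpha/p}$. First I would establish the $n$-th derivative analogue of Proposition~\ref{sequence and tent}: applying the sub-mean value inequality of Lemma~\ref{MVP} to $f^{(n)}$ over hyperbolic discs, then Remark~\ref{remarkestimatedistbound}, Lemma~\ref{estimate 1} and Proposition~\ref{Ncoverdiscs} exactly as in the proof of Proposition~\ref{sequence and tent}, one gets $\|\{\lambda_{i,j}\}\|_{T_{p}^{s}(Z)}\lesssim\|f\|_{T_{p}^{s}(\alpha)}$. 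Writing $\D=\bigcup_{i,j}R_{i,j}$ and bounding $|f^{(n)}|$ on $R_{i,j}$ by its supremum, after collecting the powers of $1-|z_{i,j}|$ one obtains
\begin{align*}
\int_{\D}|f^{(n)}(w)|^{s}\,d\mu(w)\leq\sum_{i,j}\big(\sup_{w\in R_{i,j}}|f^{(n)}(w)|\big)^{s}\mu(R_{i,j})\lesssim\sum_{i,j}\lambda_{i,j}^{\,s}\,\mu_{i,j}\,(1-|z_{i,j}|).
\end{align*}
Since $\{\lambda_{i,j}\}\in T_{p}^{s}(Z)$ we have $\{\lambda_{i,j}^{\,s}\}\in T_{p/s}^{1}(Z)$ with $\|\{\lambda_{i,j}^{\,s}\}\|_{T_{p/s}^{1}(Z)}=\|\{\lambda_{i,j}\}\|_{T_{p}^{s}(Z)}^{s}$, and $1<p/s<+\infty$, so Proposition~\ref{Jevtic} bounds the last sum by a constant multiple of $\|\{\mu_{i,j}\}\|_{T_{p/(p-s)}^{\infty}(Z)}\,\|\{\lambda_{i,j}\}\|_{T_{p}^{s}(Z)}^{s}\lesssim\|f\|_{T_{p}^{s}(\alpha)}^{s}$, which is \eqref{lueckingd}.

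\emph{Necessity.} Suppose \eqref{lueckingd} holds. Given a nonnegative $\{\lambda_{k}\}=\{\lambda_{i,j}\}\in T_{p}^{s}(Z)$ and the Rademacher functions $\rho_{k}$, I would test \eqref{lueckingd} against $F_{x}:=S(\{\lambda_{k}\rho_{k}(x)\})$, where $S$ is the operator of Proposition~\ref{Soperator} (with $K>\max\{1,p/s,1/s,1/p\}+\alpha/p$); then $\|F_{x}\|_{T_{p}^{s}(\alpha)}\lesssim\|\{\lambda_{k}\}\|_{T_{p}^{s}(Z)}$ uniformly in $x$, while $F_{x}^{(n)}(z)=\sum_{k}\lambda_{k}\rho_{k}(x)\psi_{k}(z)$ with $|\psi_{k}(z)|\asymp (1-|z_{k}|)^{K-\alpha/p}|1-\overline{z_{k}}z|^{-(K+n)}$. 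Integrating the tested inequality over $x\in[0,1]$, applying Fubini's theorem and then Khinchine's inequality, one replaces the right-hand side by the square function $\big(\sum_{k}\lambda_{k}^{2}(1-|z_{k}|)^{2K-2\alpha/p}|1-\overline{z_{k}}w|^{-2(K+n)}\big)^{1/2}$; dominating $\chi_{D(z_{k},r)}(w)$ by a power of $(1-|z_{k}|)/|1-\overline{z_{k}}w|$, using that each $w$ lies in at most $N=N(r)$ discs $D(z_{k},r)$ (Proposition~\ref{Ncoverdiscs}), the inequality $\|v\|_{s}\leq\max\{1,N^{1/s-1/2}\}\|v\|_{2}$ for $v\in\C^{N}$, and $\mu(D(z_{k},r))\asymp\mu(R_{i,j})$, one arrives at
\begin{align*}
\sum_{i,j}\lambda_{i,j}^{\,s}\,\mu_{i,j}\,(1-|z_{i,j}|)\lesssim\|\{\lambda_{i,j}\}\|_{T_{p}^{s}(Z)}^{s}.
\end{align*}
Hence the pairing of $\{\lambda_{i,j}^{\,s}\}\in T_{p/s}^{1}(Z)$ with $\{\mu_{i,j}\}$ is bounded, and since $1<p/s<+\infty$ the norm equivalence in Proposition~\ref{Jevtic} gives $\{\mu_{i,j}\}\in T_{p/(p-s)}^{\infty}(Z)$. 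This mirrors the argument for the implication $(1)\Rightarrow(2)$ in Theorem~\ref{theoremCarlesonTent}.

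\emph{Main difficulty.} The genuinely new point is structural rather than computational: because $q=s$ the discretized problem lives in $T_{p/s}^{1}(Z)$, a sequence tent space whose first index exceeds $1$ and whose second index equals $1$---exactly the configuration that Proposition~\ref{Luecking1} does not cover (see the remark following it)---so the correct tool is the duality of Proposition~\ref{Jevtic}. The remaining ingredients (the two uses of the sub-mean value property and of Lemma~\ref{estimate 1}, the bookkeeping of the powers of $1-|z_{i,j}|$ coming from the order $n$ of the derivative and from $\alpha$, the comparison $\mu(D(z_{k},r))\asymp\mu(R_{i,j})$, and the Rademacher/Khinchine manipulations) is routine and runs exactly as in the proofs of Proposition~\ref{sequence and tent} and of Theorem~\ref{theoremCarlesonTent}.
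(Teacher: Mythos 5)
Your proposal is correct and follows essentially the same route as the paper: discretize over the Luecking regions, control the discretized sequence via the sub-mean-value property and the covering properties of the lattice, use the operator $S$ with Rademacher functions and Khinchine's inequality for necessity, and in both directions invoke the duality $(T_{p/s}^{1}(Z))^{\ast}\cong T_{p/(p-s)}^{\infty}(Z)$ of Proposition~\ref{Jevtic}, which is exactly the point of the theorem. The only cosmetic deviations are that the paper exploits the disjointness of the $R_{i,j}$ to pass from the $\ell^{2}$ square function to the $\ell^{s}$ sum directly (rather than your $N$-overlap norm comparison), and that your written comparison $\mu(D(z_k,r))\asymp\mu(R_{i,j})$ should only be used in the one direction $\mu(R_{i,j})\leq\mu(D(z_{i,j},r))$ that the necessity argument actually needs.
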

\begin{proof}
	First, we prove that the condition $\left\{\mu(R_{i,j})(1-|z_{i,j}|)^{-\frac{s}{p}-sn-1}\right\}\in T_{\frac{p}{p-s}}^{\infty}(Z)$ implies \eqref{lueckingd}. Let $f\in AT^s_p(\alpha)$, then 
	$$|f^{(n)}(z)|\leq \sum_{ij} |f^{(n)}(\tilde{z}_{i,j})|\chi_{R_{i,j}}(z),\quad z\in\D,$$
	where $R_{i,j}$ is a Luecking region (see Example~\ref{LueckingCenters}) and $\tilde{z}_{i,j}\in \overline{R_{i,j}}$ is such that $|f^{(n)}(\tilde{z}_{i,j})| :=\sup_{w\in \overline{R_{i,j}}} |f^{(n)}(w)|$. 
	So that
	\begin{align*}
	&\int_{\D} |f^{(n)}(w)|^{s}\ d\mu(w)\leq \sum_{ij} |f^{(n)}(\tilde{z}_{i,j})|^{s}\ \mu(R_{i,j})\\
	&\quad=\sum_{ij} |f^{(n)}(\tilde{z}_{i,j})|^{s}\ (1-|z_{i,j}|)^{\frac{\alpha s}{p}+sn} \frac{\mu(R_{i,j})}{(1-|z_{i,j}|)^{\frac{\alpha s}{p}+ns+1}}(1-|z_{i,j}|).
	\end{align*}
	Hence, by hypothesis and Proposition~\ref{Jevtic} it follows that
	\begin{align*}
	\int_{\D} |f^{(n)}(w)|^{s}\ d\mu(w)&\leq \sum_{ij} |f^{(n)}(\tilde{z}_{i,j})|^{s}\ (1-|z_{i,j}|)^{\frac{\alpha s}{p}+sn} \frac{\mu(R_{i,j})}{(1-|z_{i,j}|)^{\frac{\alpha s}{p}+ns+1}}(1-|z_{i,j}|)\\
	&\lesssim  \|\{\mu_{i,j}\}\|_{T_{\frac{p}{p-s}}^{\infty}(Z)} \|\{|f^{(n)}(\tilde{z}_{i,j})|^{s}\ (1-|z_{i,j}|)^{\frac{\alpha s}{p}+sn}\}\|_{T_{p/s}^{1}(Z)}.
	\end{align*}
	
	Let us check that $$\| \{|f^{(n)}(\tilde{z}_{i,j})|^{s}(1-|z_k|)^{\frac{\alpha s}{p}+sn}\}\|_{T_{p/s}^{1}(Z)} \lesssim \|f\|_{T_{p}^{s}(\alpha)}.$$ 
	
	Using the fact that there is $\eta>0$ such that $\overline{R_{i,j}}\subset D(z_{i,j},\eta)$. Applying the mean property over each hyperbolic disc $D(\tilde{z}_{i,j},\eta)$ (see Lemma~\ref{MVP}), it follows that
	\begin{align*}
	&\|\{|f^{(n)}(\tilde{z}_{i,j})|^{s}\ (1-|z_{i,j}|)^{\frac{\alpha s}{p}+sn}\}\|_{T_{p/s}^{1}(Z)}\\
	&\quad=\int_{\T}\left(\sum_{z_{i,j}\in \Gamma_{M}(\xi)} |f^{(n)}(\tilde{z}_{i,j})|^{p}\ (1-|z_{i,j}|)^{\alpha+pn}\right)^{s/p}\ |d\xi| \\
	&\quad \lesssim \int_{\T}\left(\sum_{z_{i,j}\in \Gamma_{M}(\xi)} \frac{(1-|z_{i,j}|)^{pn+\alpha}}{(1-|\tilde{z}_{i,j}|)^{np+2}}\int_{D(\tilde{z}_{i,j},\eta)}|f(w)|^p\ dm(w)\ \right)^{s/p}\ |d\xi| .
	\end{align*}
	Since $D(\tilde{z}_{i,j},\eta)\subset D(z_{i,j},2\eta)$, by Remark~\ref{remarkestimatedistbound} we have 
	\begin{align*}
	&\|\{|f^{(n)}(\tilde{z}_{i,j})|^{s}\ (1-|z_{i,j}|)^{\frac{\alpha s}{p}+sn}\}\|_{T_{p/s}^{1}(Z)}\\
	&\quad \lesssim \int_{\T}\left(\sum_{z_{i,j}\in \Gamma_{M}(\xi)} \int_{D(z_{i,j},2\eta)}|f(w)|^p\ \frac{dm(w)}{(1-|w|)^{2-\alpha}}\ \right)^{s/p}\ |d\xi| .
	\end{align*}
	
	By Lemma~\ref{Wu}, we can take $M_{+}>M>1/2$ such that
	$$\bigcup_{z_{i,j}\in\Gamma_{M}(\xi)} D(z_{i,j},2\eta)\subset \Gamma_{M_{+}}(\xi),$$
	we have
	\begin{align*}
&\|\{|f^{(n)}(\tilde{z}_{i,j})|^{s}\ (1-|z_{i,j}|)^{\frac{\alpha s}{p}+sn}\}\|_{T_{p/s}^{1}(Z)}\\
&\quad\lesssim \int_{\T}\left(\int_{\Gamma_{M_+}(\xi)} \left(\sum_{z_{i,j}\in \Gamma_{M}(\xi)} \chi_{D(z_{i,j},2\eta)}(w)\right)|f(w)|^p\ \frac{dm(w)}{(1-|w|)^{2-\alpha}}\ \right)^{s/p}\ |d\xi|.
	\end{align*}
	
Since each $z\in \D$ can be covered by a fixed number $N$ of hyperbolic discs $D(z_{i,j},2\eta)$ (see Proposition~\ref{Ncoverdiscs}), we obtain 
$$ \|\{|f^{(n)}(\tilde{z}_{i,j})|^{s}\ (1-|z_{i,j}|)^{\frac{\alpha s}{p}+sn}\}\|_{T_{p/s}^{1}(Z)}\lesssim \|f\|_{T_{p}^{s}(\alpha)}^{s}.
$$	


	Conversely, assume that there is a constant $C>0$ such that 
	\begin{align*}
	\left(\int_{\D} |f^{(n)}(w)|^{s}\ d\mu(w)\right)^{1/s}\leq C \|f\|_{T_{p}^{s}(\alpha)}
	\end{align*}
	for every $f\in AT_{p}^{s}(\alpha)$. Let us see that  $\left\{\mu(R_{i,j})(1-|z_{i,j}|)^{-\frac{s}{p}-sn-1}\right\}\in T_{\frac{p}{p-s}}^{\infty}(Z)$. Let $\lambda=\{\lambda_{i,j}\}$ be a sequence of $T_{p}^{s}(Z)$ and $\rho_{i,j}: [0,1]\rightarrow \{-1,1\}$ the Radermacher functions.
	Moreover, for $M>\max\left\{1,p/s,1/s,1/p\right\}+1/p$ we consider the function
	\begin{align*}
	F_x(z):=\sum_{ij} \lambda_{i,j} \rho_{i,j}(x) \frac{(1-|z_{i,j}|)^{M-\frac{1}{p}}}{(1-\overline{z_{i,j}}z)^{M}}\,, \quad z \in \mathbb D\,.
	\end{align*}
	Using Proposition~\ref{Soperator}, we have that
	\begin{align*}
	\left(\int_{\D} |F_{x}^{(n)}(w)|^{s}\ d\mu(w)\right)^{1/s}\leq C \|F_x\|_{T_{p}^{s}(\alpha)}\lesssim \|\lambda\|_{T_{p}^{s}(Z)}. 
	\end{align*}
	Integrating both sides with respect to $x$ we obtain
	\begin{align*}
	\int_{\D}\int_{0}^{1} |F_{x}^{(n)}(w)|^{s}\ dx\ d\mu(w)\lesssim \|\lambda\|_{T_{p}^{s}(Z)}^{s}. 
	\end{align*}
	By means of the Khinchine's inequality we get that 
	\begin{align*}
	\int_{\D} \left(\sum_{i,j} |\lambda_{i,j}|^{2} |z_{i,j}|^{2n} \frac{(1-|z_{i,j}|)^{2M-\frac{2}{p}}}{|1-z\overline{z_{i,j}}|^{2M+2n}}\right)^{s/2}\ d\mu(z)\lesssim \|\lambda\|_{T_{p}^{s}(Z)}^{s}. 
	\end{align*}
	Since $\chi_{R_{ij}}(z)\lesssim \frac{1-|z_{i,j}|}{|1-\overline{z_{i,j}}z|}$, we have that
	\begin{align*}
	\int_{\D} \left(\sum_{ij} |\lambda_{i,j}|^{2}  \frac{\chi_{R_{i,j}}(z)}{(1-|z_{i,j}|)^{2n+\frac{2}{p}}}\right)^{s/2}\ d\mu(z)\lesssim \|\lambda\|_{T_{p}^{s}(Z)}^{s}. 
	\end{align*}
	Therefore, it follows that
	\begin{align*}
	\sum_{ij} |\lambda_{i,j}|^{s} \frac{\mu(R_{i,j})}{(1-|z_{i,j}|)^{ns+\frac{s}{p}+1}}\ (1-|z_{i,j}|)\lesssim \|\lambda\|_{T_{p}^{s}(Z)}^{s}=\|\{|\lambda_{k}|^{s}\}\|_{T_{p/s}^{1}(Z)}. 
	\end{align*}
	
	The arbitrariness of $\{|\lambda_{i,j}|^{s}\}$ in $T_{p/s}^{1}(Z)$ and Proposition~\ref{Jevtic} show that $$\left\{\mu(R_{i,j})(1-|z_{i,j}|)^{-\frac{s}{p}-sn-1}\right\}\in T_{\frac{p}{p-s}}^{\infty}(Z).$$

\end{proof}

\section{Integration operator}
In this last section, we focus on the application of our main result to the study of the integration operator 
 when acting on the $RM(p,q)$ spaces.
 Let a $g\in\mathcal{H}(\D)$, by the term intergration operator we refer to the transformation 
$$
T_{g}(f)(z)=\int_{0}^{z} f(\zeta)g'(\zeta)\ d\zeta\,,  \quad z\in \mathbb D\,,
$$
where $ f \in \mathcal H (\mathbb D)\,$.

We recall that for {$1\leq p< +\infty$, $1\leq q\leq +\infty$}
$$T_g : RM(p,q)\rightarrow RM(p,q)$$
bounded  if and only if $g\in\mathcal{B}$ (see \cite{Aguilar-Contreras-Piazza_2}).  The same condition characterizes the boundedness of the operator 
on Bergman spaces (see \cite{AS2}).

The starting point for the study of $T_g$ was the  Hardy space setting. It was proved that,  
$$T_{g}: H^{q}\rightarrow H^q $$
 is bounded if and only if $g\in BMOA$ (see \cite{Pom},\cite{AS}). 
 Since then many authors have considered the action of $T_g$ between distinct Bergman, Hardy  spaces (see, i.e., \cite{Wu_2011},\cite{MiihkinenPauPeralaWang2020}).
 Based on the fact that  an $RM(p,q)$ space is identified as a Bergman, when $p=q$, and a Hardy space $H^q$ corresponds to the limit case $RM(\infty,q)$, here  we consider the more general question of characterizing the symbols $g$ such that 
 $$T_g: RM(p,q)\rightarrow RM(t,s)$$ 
 for $1\leq p,q ,t,s \leq +\infty$.  It turns out that Theorem $4.1$ is the key to this study.
 
First we present the  answer  to the question under consideration when the indices are finite.
\begin{theorem}\label{RMRM}
	Let $1\leq p,q,s,t<+\infty$. The following statements are equivalent:
	\begin{enumerate}
		\item The operator $T_{g}:RM(p,q)\rightarrow RM(t,s)$ is bounded.
		\item If $Z=\{z_k\}$ is an $(r,\kappa)$-lattice and denoting $d\mu_g(z):=|g'(z)|^t\ dm(z)$ it holds
		\begin{enumerate}
			\item If $1\leq s<q<+\infty$, $1\leq t<p<+\infty$, then
			\begin{align*}
			\int_{\T} \left(\sum_{z_k\in \Gamma(\xi)} \left(\frac{\mu_{g}^{1/t}(D(z_k,r))}{(1-|z_k|)^{\frac{1}{p}+\frac{1}{t}-1}}\right)^{\frac{pt}{p-t}} \right)^{\frac{(p-t)qs}{(q-s)pt}}\ |d\xi|<+\infty.
			\end{align*}
			\item If $1\leq s< q<+\infty$, $1\leq p\leq t<+\infty$, then
			\begin{align*}
			\int_{\T} \left(\sup_{z_k\in \Gamma(\xi)} \frac{\mu_{g}^{1/t}(D(z_k,r))}{(1-|z_k|)^{\frac{1}{p}+\frac{1}{t}-1}} \right)^{\frac{qs}{q-s}}\ |d\xi|<+\infty.
			\end{align*}
			\item If $1\leq q< s<+\infty$, $1\leq p,t<\infty$ or $1\leq q=s<+\infty$, $1\leq p\leq t<+\infty$, then
			\begin{align*}
			\sup_{k} \frac{\mu_{g}^{1/t}(D(z_k,r))}{(1-|z_k|)^{\frac{1}{p}+\frac{1}{q}+\frac{1}{t}-\frac{1}{s}-1}}<+\infty.
			\end{align*}
			\item If $1\leq q=s<+\infty$, $1\leq t< p<+\infty$, then
			\begin{align*}
			\sup_{\xi\in\T} \left(\sup_{\xi\in I}  \sum_{z_{k}\in S(I)} \left(\frac{\mu_{g}^{1/t}(D(z_k,r))}{(1-|z_k|)^{\frac{1}{p}+\frac{1}{t}-1}}\right)^{\frac{pt}{p-t}} (1-|z_k|) \right)^{\frac{p-t}{pt}}<+\infty
			\end{align*}
		\end{enumerate}
		\item The function $g\in \mathcal{H}(\D)$ satisfies that
		\begin{enumerate}
			\item If $1\leq s<q<+\infty$, $1\leq t<p<+\infty$, 
			\begin{align*}
			g\in RM\left(\frac{pt}{p-t},\frac{qs}{q-s}\right).
			\end{align*}
			\item If $1\leq s< q<+\infty$, $1\leq p\leq t<+\infty$, 
			\begin{align*}
			g'(z)(1-|z|)^{1+\frac{1}{t}-\frac{1}{p}}\in T_{\infty}^{\frac{qs}{q-s}}.
			\end{align*}
			\item If $1\leq q< s<+\infty$, $1\leq p,t<\infty$ or $1\leq q=s<+\infty$, $1\leq p\leq t<+\infty$, 
			\begin{align*}
			g\in\mathcal{B}^{1+\frac{1}{t}+\frac{1}{s}-\frac{1}{p}-\frac{1}{q}}.
			\end{align*}
			\item If $1\leq q=s<+\infty$, $1\leq t< p<+\infty$, the measure $$d\mu(z)=|g'(z)|^{\frac{pt}{p-t}}(1-|z|)^{\frac{pt}{p-t}}\ dm(z)$$ is a Carleson measure.
		\end{enumerate}
	\end{enumerate}
\end{theorem}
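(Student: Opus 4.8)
The plan is to reduce the boundedness of $T_g$ to the Carleson‑type embedding of Theorem~\ref{theoremCarlesonTent}, exploiting that $(T_gf)'(z)=f(z)g'(z)$ together with the tent‑space description of the $RM$ spaces. \emph{Step 1 (derivative reduction).} Since $T_gf(0)=0$, Proposition~\ref{non_tangential_char_without_der} and Perälä's Theorem~\ref{characterization 2} give
\[
\|T_gf\|_{RM(t,s)}\asymp\rho_{t,s}\bigl((T_gf)'(z)(1-|z|)\bigr)\asymp\bigl\|(T_gf)'(w)(1-|w|)\bigr\|_{T_t^s(1)}=\bigl\|f(w)\,g'(w)(1-|w|)\bigr\|_{T_t^s(1)},
\]
while $\|f\|_{RM(p,q)}\asymp\|f\|_{AT_p^q(1)}$. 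Expanding the $T_t^s(1)$‑norm and absorbing the weight $(1-|w|^2)^{-1}$,
\[
\bigl\|f(w)\,g'(w)(1-|w|)\bigr\|_{T_t^s(1)}^s\asymp\int_{\T}\Bigl(\int_{\Gamma(\xi)}|f(w)|^t\,d\nu(w)\Bigr)^{s/t}\,|d\xi|,\qquad d\nu(w):=|g'(w)|^t(1-|w|)^{t-1}\,dA(w).
\]
Hence $T_g\colon RM(p,q)\to RM(t,s)$ is bounded if and only if $\nu$ satisfies condition~(1) of Theorem~\ref{theoremCarlesonTent} with $\alpha=1$, i.e.\ one of (2)(a)--(d) there.

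\emph{Step 2 ((1)$\Leftrightarrow$(2)).} By Remark~\ref{remarkestimatedistbound} one has $\nu(D(z_k,r))\asymp(1-|z_k|)^{t-1}\mu_g(D(z_k,r))$, so
\[
\frac{\nu^{1/t}(D(z_k,r))}{(1-|z_k|)^{1/p}}\asymp\mu_g^{1/t}(D(z_k,r))\,(1-|z_k|)^{1-1/t-1/p}=\frac{\mu_g^{1/t}(D(z_k,r))}{(1-|z_k|)^{1/p+1/t-1}}.
\]
Substituting this into conditions (2)(a)--(d) of Theorem~\ref{theoremCarlesonTent} (with $\alpha=1$), and using the identities $\frac{(p-t)qs}{(q-s)pt}=\bigl(\frac{qs}{q-s}\bigr)\big/\bigl(\frac{pt}{p-t}\bigr)$, together with the obvious recognitions of $\frac{qs}{q-s}$ and $\frac{pt}{p-t}$ as the outer and inner exponents, one recovers verbatim the list (2)(a)--(d) of the present theorem; this settles (1)$\Leftrightarrow$(2).

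\emph{Step 3 ((2)$\Leftrightarrow$(3)).} Here one discretises $g'$. The sub‑mean value inequality (Lemma~\ref{MVP}) gives $|g'(z_k)|(1-|z_k|)^{2/t}\lesssim\mu_g^{1/t}(D(z_k,r))\lesssim\bigl(\sup_{\overline{D(z_k,r)}}|g'|\bigr)(1-|z_k|)^{2/t}$, and combined with Proposition~\ref{sequence and tent} applied to the analytic function $g'$, the bounded‑overlap property (Proposition~\ref{Ncoverdiscs}) and the region–independence of the tent norms, the sequence $\{\mu_g^{1/t}(D(z_k,r))(1-|z_k|)^{-\beta}\}$ is interchangeable, in every tent‑sequence norm that occurs, with the lattice samples of $|g'(z)|(1-|z|)^{2/t-\beta}$. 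Reading this off case by case, using Proposition~\ref{non_tangential_char_without_der} and Theorem~\ref{characterization 2}: condition (2)(a) is $g\in RM\bigl(\frac{pt}{p-t},\frac{qs}{q-s}\bigr)$ (via $g\in RM(P,Q)\Leftrightarrow g'\in AT^Q_P(P+1)$ with $P=\frac{pt}{p-t}$, $Q=\frac{qs}{q-s}$, this last being a change‑of‑weight identity); condition (2)(b) is $g'(z)(1-|z|)^{1+1/t-1/p}\in T_\infty^{qs/(q-s)}$ (passing between continuous and discrete non‑tangential maxima via Lemma~\ref{MVP}, Lemma~\ref{lemma_sup} and Remark~\ref{independentTinftyq}); condition (2)(c) reduces, via the sub‑mean value property and the $r$‑density of the lattice, to $\sup_{z\in\D}|g'(z)|(1-|z|)^{1+1/t+1/s-1/p-1/q}<+\infty$, i.e.\ $g\in\mathcal B^{1+1/t+1/s-1/p-1/q}$; and condition (2)(d) is the $T^\infty_{pt/(p-t)}(Z)$‑membership of the same sequence, which by the standard dictionary between $T^\infty_P(Z)$‑conditions and Carleson squares is exactly the assertion that $|g'(z)|^{pt/(p-t)}(1-|z|)^{pt/(p-t)}\,dm(z)$ is a Carleson measure.

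\emph{Main obstacle.} Nearly all of the genuine analysis is already packaged in Theorem~\ref{theoremCarlesonTent}; the delicate points that remain are (i) keeping the weight bookkeeping in Step~1 exact, and (ii) the passage in Step~3 between $\mu_g(D(z_k,r))$ and the pointwise/continuous quantities built from $g'$. The latter rests on the sub‑mean value property together with the bounded‑overlap property of the lattice, and needs care in the supremum cases (2)(b),(2)(c), where the disc‑by‑disc comparison is only one‑sided and the two‑sided estimate is recovered globally only after enlarging the aperture of the cones.
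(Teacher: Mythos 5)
Your proposal is correct and follows essentially the same route as the paper: reduce $\|T_gf\|_{RM(t,s)}$ via Proposition~\ref{non_tangential_char_without_der} and Theorem~\ref{characterization 2} to the embedding of Theorem~\ref{theoremCarlesonTent} for $d\nu=|g'|^t(1-|w|)^{t-1}dm$, translate the lattice conditions using $\nu(D(z_k,r))\asymp(1-|z_k|)^{t-1}\mu_g(D(z_k,r))$, and then pass between the discrete conditions (2) and the function-theoretic conditions (3) by the sub-mean value property, bounded overlap, and aperture enlargement. The delicate points you flag (the one-sided disc-by-disc comparison in the supremum cases, repaired by enlarging the cones) are exactly the ones the paper handles.
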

\begin{proof}
Fix $M>1/2$. In the begining of the proof we remind that, given $f\in \mathcal{H}(\D)$,
\begin{align*}
	\rho_{t,s}(T_{g}(f))& \asymp\left(\int_{\T} \left(\int_{\Gamma_{M}(\xi)} |T_g(f)'(w)|^{t} (1-|w|)^{t-1}\ dm(w)\right)^{s/t}\ |d\xi|\right)^{1/s}\\
	& = \left(\int_{\T} \left(\int_{\Gamma_{M}(\xi)} |f(w)|^{t} |g'(w)|^{t} (1-|w|)^{t-1}\ dm(w)\right)^{s/t}\ |d\xi|\right)^{1/s}
	\end{align*}
	by Theorems~\ref{non_tangential_char_without_der}--\ref{characterization 2}.  Now, consider $\mu $ the  measure
	$$
	d\mu(z)=|g'(w)|^{t}(1-|w|)^{t-1}\ dm(w)=(1-|w|)^{t-1}\ d\mu_g(w)\,.
	$$ 
	In addition,  taking into account that given the $(r,\kappa)$-lattice $\{z_k\}$ ,
	$$ 
	1-|w| \asymp  1-|z_k|\,,\ w\in\D(z,k,r),
	$$
	for any  $w $ in the hyperbolic disc $ D(z_k, r)$, and we have 
	$$\mu(D(z_k,r))\asymp (1-|z_k|)^{t-1} \mu_{g}(D(z_k,r)).$$
	Bearing in mind these facts, the equivalences between \emph{(1)} and \emph{(2)} follow immediately by using Theorem~\ref{theoremCarlesonTent}.
		

	From now on, we prove the equivalence between \emph{(2)} and \emph{(3)}. We split the proof in different cases.
	
	Let us prove that \emph{(2)(a)} is equivalent to \emph{(3)(a)}. The idea is to show that 
	\begin{align}\label{equivalencia(a)}
		&\int_{\T} \left(\sum_{z_k\in \Gamma_{M}(\xi)} \left(\frac{\mu_{g}^{1/t}(D(z_k,r))}{(1-|z_k|)^{\frac{1}{p}+\frac{1}{t}-1}}\right)^{\frac{pt}{p-t}} \right)^{\frac{(p-t)qs}{(q-s)pt}}\ |d\xi|\\\nonumber
		&\quad\asymp \int_{\T} \left(\int_{\Gamma_{M}(\xi)} {|g'(w)|^{\frac{pt}{p-t}}(1-|w|)^{\frac{pt}{p-t}-1}}\ dm(w) \right)^{\frac{(p-t)qs}{(q-s)pt}}\ |d\xi|.
	\end{align}
	First, let us check that 
	\begin{align}\label{eq(a)}
	&\int_{\T} \left(\sum_{z_k\in \Gamma_{M}(\xi)} \left(\frac{\mu_{g}^{1/t}(D(z_k,r))}{(1-|z_k|)^{\frac{1}{p}+\frac{1}{t}-1}}\right)^{\frac{pt}{p-t}} \right)^{\frac{(p-t)qs}{(q-s)pt}}\ |d\xi|\\\nonumber
	&\quad\lesssim \int_{\T} \left(\int_{\Gamma_{M}(\xi)} {|g'(w)|^{\frac{pt}{p-t}}(1-|w|)^{\frac{pt}{p-t}-1}}\ dm(w) \right)^{\frac{(p-t)qs}{(q-s)pt}}\ |d\xi|.
	\end{align}
	Setting $|g'(\tilde{z}_k)|:=\sup_{w\in \overline{D(z_k,r)}} |g'(w)|$ we have
	\begin{align*}
	&\int_{\T} \left(\sum_{z_k\in \Gamma_{M}(\xi)} \left(\frac{\mu_{g}^{1/t}(D(z_k,r))}{(1-|z_k|)^{\frac{1}{p}+\frac{1}{t}-1}}\right)^{\frac{pt}{p-t}} \right)^{\frac{(p-t)qs}{(q-s)pt}}\ |d\xi|\\
	&\quad\lesssim\int_{\T} \left(\sum_{z_k\in \Gamma_{M}(\xi)}   \left( \sup_{w\in \overline{D(z_k,r)}} |g'(w)| (1-|z_k|)^{\frac{1}{t}-\frac{1}{p}+1}
	\right)^{\frac{pt}{p-t}} \right)^{\frac{(p-t)qs}{(q-s)pt}}\ |d\xi|\\
	& \quad = \int_{\T} \left(\sum_{z_k\in \Gamma_{M}(\xi)}   \left( |g'(\tilde{z}_k)| (1-|z_k|)^{\frac{1}{t}-\frac{1}{p}+1}
	\right)^{\frac{pt}{p-t}} \right)^{\frac{(p-t)qs}{(q-s)pt}}\ |d\xi|\,.
	\end{align*}
	At this point we apply the mean value property over the hyperbolic disc $D(\tilde{z}_k,s)$ with $s<3r$ (see Lemma~\ref{MVP}), so
	\begin{align*}
	&\int_{\T} \left(\sum_{z_k\in \Gamma_{M}(\xi)} \left(\frac{\mu_{g}^{1/t}(D(z_k,r))}{(1-|z_k|)^{\frac{1}{p}+\frac{1}{t}-1}}\right)^{\frac{pt}{p-t}} \right)^{\frac{(p-t)qs}{(q-s)pt}}\ |d\xi|\\
	&\quad\lesssim  \int_{\T} \left(\sum_{z_k\in \Gamma_{M}(\xi)} \frac{(1-|z_k|)^{1+\frac{pt}{p-t}}}{(1-|\tilde{z}_k|)^{2}}  \left( \int_{D(\tilde{z}_k,s)}|g'(w)|^{\frac{pt}{p-t}} \ dm(w)
	\right) \right)^{\frac{(p-t)qs}{(q-s)pt}}\ |d\xi|.
	\end{align*}
	Since $D(\tilde{z}_k,s)\subset D(z_k,4r)$, by Remark~\ref{remarkestimatedistbound}, we have
	\begin{align*}
	&\int_{\T} \left(\sum_{z_k\in \Gamma_{M}(\xi)} \left(\frac{\mu_{g}^{1/t}(D(z_k,r))}{(1-|z_k|)^{\frac{1}{p}+\frac{1}{t}-1}}\right)^{\frac{pt}{p-t}} \right)^{\frac{(p-t)qs}{(q-s)pt}}\ |d\xi|\\
	&\quad\lesssim  \int_{\T} \left(\sum_{z_k\in \Gamma_{M}(\xi)}  \left( \int_{D({z}_k,4r)}|g'(w)|^{\frac{pt}{p-t}} (1-|w|)^{\frac{pt}{p-t}-1} \ dm(w)
	\right) \right)^{\frac{(p-t)qs}{(q-s)pt}}\ |d\xi|.
	\end{align*}
	Taking $M_{+}>M>1/2$ such that
	$$
	\bigcup_{D(z_{k},r)\cap \Gamma_{M} (\xi)\neq \emptyset } D(z_k,4r) \subset \Gamma_{M_{+}}(\xi),
	$$
	we get that
	\begin{align*}
	&\int_{\T} \left(\sum_{z_k\in \Gamma_{M}(\xi)} \left(\frac{\mu_{g}^{1/t}(D(z_k,r))}{(1-|z_k|)^{\frac{1}{p}+\frac{1}{t}-1}}\right)^{\frac{pt}{p-t}} \right)^{\frac{(p-t)qs}{(q-s)pt}}\ |d\xi|\\
	&\quad\lesssim \int_{\T} \left(\int_{\Gamma_{M_{+}}(\xi)} \left(\sum_{z_k\in \Gamma_{M}(\xi)}\chi_{D(z_k,4r)}(w) \right) \frac{|g'(w)|^{\frac{pt}{p-t}}}{(1-|w|)^{1-\frac{pt}{p-t}}} \ dm(w) \right)^{\frac{(p-t)qs}{(q-s)pt}}\ |d\xi|.
	\end{align*}
	According to Proposition~\ref{Ncoverdiscs}, each $z\in \D$ belongs to no more than $N$ hyperbolic discs $D(z_k,4r)$ with $N$ only depending on the lattice. As a consequence  
	\begin{align*}
	&\int_{\T} \left(\sum_{z_k\in \Gamma_{M}(\xi)} \left(\frac{\mu_{g}^{1/t}(D(z_k,r))}{(1-|z_k|)^{\frac{1}{p}+\frac{1}{t}-1}}\right)^{\frac{pt}{p-t}} \right)^{\frac{(p-t)qs}{(q-s)pt}}\ |d\xi|\\
	&\quad\lesssim \int_{\T} \left(\int_{\Gamma_{M_{+}}(\xi)}  |g'(w)|^{\frac{pt}{p-t}}(1-|w|)^{\frac{pt}{p-t}-1} \ dm(w) \right)^{\frac{(p-t)qs}{(q-s)pt}}\ |d\xi|\\
	&\quad\lesssim \int_{\T} \left(\int_{\Gamma_{M}(\xi)}  |g'(w)|^{\frac{pt}{p-t}}(1-|w|)^{\frac{pt}{p-t}-1} \ dm(w) \right)^{\frac{(p-t)qs}{(q-s)pt}}\ |d\xi|,
	\end{align*}
	where the last inequality follows by Lemma~\ref{estimate 1}. So that, \eqref{eq(a)} holds.
	
	Now, we proceed with the converse inequality. Using the pointwise estimate $|g'(z)|\leq \sum_{k} |g'(z)| \chi_{D({z}_k,r)}(z)$ and the fact (given by Lemma~\ref{Wu}) that we can take $M_{+}>M>1/2$ such that
	$$
	\bigcup_{D(z_{k},r)\cap \Gamma_{M}(\xi)\neq \emptyset } D(z_k,r)\subset \Gamma_{M_+}(\xi),
	$$
	it follows that
	\begin{align*}
	&\int_{\T}\left(\int_{\Gamma_{M}(\xi)} |g'(w)|^{\frac{pt}{p-t}}(1-|w|)^{\frac{pt}{p-t}-1} {\ dm(w)}{}\right)^{\frac{(p-t)qs}{(q-s)pt}} |d\xi|\\
	&\quad\leq\int_{\T}\left(\sum_{k}\int_{\Gamma_{M}(\xi)\cap D({z}_k,r)} |g'(w)|^{\frac{pt}{p-t}}(1-|w|)^{\frac{pt}{p-t}-1} \ dm(w)\right)^{\frac{(p-t)qs}{(q-s)pt}} |d\xi|\\
	&\quad \leq \int_{\T}\left(\sum_{z_{k}\in \Gamma_{M_+}(\xi)}\int_{D({z}_k,r)} |g'(w)|^{\frac{pt}{p-t}}(1-|w|)^{\frac{pt}{p-t}-1} \ dm(w)\right)^{\frac{(p-t)qs}{(q-s)pt}} |d\xi|.
	\end{align*}
	Taking $\tilde{z}_{k}\in\overline{D(z_k,r)}$ such that $|g'(\tilde{z}_{k})|:=\sup_{w\in\overline{D(z_k,r)}} |g'(w)|$, and using Remark~\ref{remarkestimatedistbound}, we have
	\begin{align*}
	&\int_{\T}\left(\int_{\Gamma_{M}(\xi)} |g'(w)|^{\frac{pt}{p-t}}(1-|w|)^{\frac{pt}{p-t}-1} {\ dm(w)}{}\right)^{\frac{(p-t)qs}{(q-s)pt}} |d\xi|\\
	&\quad \lesssim \int_{\T}\left(\sum_{z_{k}\in \Gamma_{M_+}(\xi)} |g'(\tilde{z}_{k})|^{\frac{pt}{p-t}}\ (1-|z_k|)^{\frac{pt}{p-t}+1} \right)^{\frac{(p-t)qs}{(q-s)pt}} |d\xi|.
	\end{align*}
	By the mean value property over the hyperbolic disc $D(\tilde{z}_{k},s)$ with $s<3r$ (see Lemma~\ref{MVP}), we obtain 
	\begin{align*}
	&\int_{\T}\left(\int_{\Gamma_{M}(\xi)} |g'(w)|^{\frac{pt}{p-t}}(1-|w|)^{\frac{pt}{p-t}-1} {\ dm(w)}{}\right)^{\frac{(p-t)qs}{(q-s)pt}} |d\xi|\\
	&\quad \lesssim \int_{\T}\left(\sum_{z_{k}\in \Gamma_{M_+}(\xi)} \left(\int_{D(\tilde{z}_{k},s)} |g'(w)|^{t}\ \frac{dm(w)}{(1-|\tilde{z}_{k}|)^{2}}\right)^{\frac{p}{p-t}}\ (1-|z_k|)^{\frac{pt}{p-t}+1} \right)^{\frac{(p-t)qs}{(q-s)pt}} |d\xi|.
	\end{align*}
	Since $D(\tilde{z}_{k},s)\subset D(z_k,4r)$ and Remark~\ref{remarkestimatedistbound}, we have
	\begin{align*}
	&\int_{\T}\left(\int_{\Gamma_{M}(\xi)} |g'(w)|^{\frac{pt}{p-t}}(1-|w|)^{\frac{pt}{p-t}-1} {\ dm(w)}{}\right)^{\frac{(p-t)qs}{(q-s)pt}} |d\xi|\\
	&\quad \lesssim \int_{\T}\left(\sum_{z_{k}\in \Gamma_{M_+}(\xi)} \left(\int_{D({z_k},4r)} |g'(w)|^{t}\ \ dm(w)\right)^{\frac{p}{p-t}}\ (1-|z_k|)^{\frac{pt-p-t}{p-t}} \right)^{\frac{(p-t)qs}{(q-s)pt}} |d\xi|\\
	&\quad \lesssim \int_{\T}\left(\sum_{z_{k}\in \Gamma_{M_+}(\xi)} \left(\sum_{j}\frac{\int_{D({z_j},r)} \chi_{D(z_k,4r)}(w) |g'(w)|^{t}\ \ dm(w)}{(1-|z_k|)^{1+\frac{t}{p}-t}}\right)^{\frac{p}{p-t}}\  \right)^{\frac{(p-t)qs}{(q-s)pt}} |d\xi|.
	\end{align*}
	Applying Remark~\ref{remarkestimatedistbound} one more time, it follows that
	\begin{align*}
	&\int_{\T}\left(\int_{\Gamma_{M}(\xi)} |g'(w)|^{\frac{pt}{p-t}}(1-|w|)^{\frac{pt}{p-t}-1} {\ dm(w)}\right)^{\frac{(p-t)qs}{(q-s)pt}} |d\xi|\\
	&\quad \lesssim \int_{\T}\left(\sum_{z_{k}\in \Gamma_{M_+}(\xi)} \left(\sum_{z_{j}\in D(z_k,5r)}\frac{\mu_{g}^{1/t}(D(z_j,r))}{(1-|z_j|)^{\frac{1}{t}+\frac{1}{p}-1}}\right)^{\frac{pt}{p-t}}\  \right)^{\frac{(p-t)qs}{(q-s)pt}} |d\xi|.
	\end{align*}
	By Proposition~\ref{Ncoverdiscs}, there are at most $N=N(r,\kappa)$  points $z_j$ of the $(r,\kappa)$-lattice in the disc $D(z_k,5r)$. Using the fact that 
	$$\|w\|_{\ell^{1}}\leq N^{t/p} \|w\|_{\ell^{\frac{p}{p-t}}}$$
	for all $w\in\C^{N}$, we obtain 
	\begin{align*}
	&\int_{\T}\left(\int_{\Gamma_{M}(\xi)} |g'(w)|^{\frac{pt}{p-t}}(1-|w|)^{\frac{pt}{p-t}-1} {\ dm(w)}\right)^{\frac{(p-t)qs}{(q-s)pt}} |d\xi|\\
	&\quad \lesssim \int_{\T}\left(\sum_{z_{k}\in \Gamma_{M_+}(\xi)} \sum_{z_{j}\in D(z_k,5r)} N^{t/p}\left(\frac{\mu_{g}^{1/t}(D(z_j,r))}{(1-|z_j|)^{\frac{1}{t}+\frac{1}{p}-1}}\right)^{\frac{pt}{p-t}}\  \right)^{\frac{(p-t)qs}{(q-s)pt}} |d\xi|.
	\end{align*}
	Now, by Lemma~\ref{Wu}, one can take $M_{\ast}>M_{+}>1/2$ such that
	\begin{align*}
	\bigcup_{z_{k}\in \Gamma_{M_{+}}(\xi)} D(z_k,5r)\subset \Gamma_{M_\ast}(\xi).
	\end{align*}
	Hence, we have
	\begin{align*}
	&\int_{\T}\left(\int_{\Gamma_{M}(\xi)} |g'(w)|^{\frac{pt}{p-t}}(1-|w|)^{\frac{pt}{p-t}-1} {\ dm(w)}\right)^{\frac{(p-t)qs}{(q-s)pt}} |d\xi|\\
	&\quad \lesssim N^{\frac{(p-t)qs}{(q-s)p^{2}}} \int_{\T}\left(\sum_{j}\sum_{z_{k}\in \Gamma_{M_+}(\xi)} \chi_{D(z_j,5r)}(z_k) \left(\frac{\mu_{g}^{1/t}(D(z_j,r))}{(1-|z_j|)^{\frac{1}{t}+\frac{1}{p}-1}}\right)^{\frac{pt}{p-t}}\  \right)^{\frac{(p-t)qs}{(q-s)pt}} |d\xi|\\
	&\quad \lesssim N^{\frac{(p-t)qs}{(q-s)p^{2}}} \int_{\T}\left(\sum_{z_{j}\in \Gamma_{M_\ast}(\xi) }\left(\frac{\mu_{g}^{1/t}(D(z_j,r))}{(1-|z_j|)^{\frac{1}{t}+\frac{1}{p}-1}}\right)^{\frac{pt}{p-t}}\sum_{z_{k}\in \Gamma_{M_+}(\xi)} \chi_{D(z_j,5r)}(z_k) \  \right)^{\frac{(p-t)qs}{(q-s)pt}} |d\xi|.
	\end{align*}
	Since, for all $j\in\N$,
	$$\sum_{z_{k}\in \Gamma_{M_+}(\xi)} \chi_{D(z_j,5r)}(z_k)\leq N,$$
	where $N=N(r,\kappa)$, we have
	\begin{align*}
	&\int_{\T}\left(\int_{\Gamma_{M}(\xi)} |g'(w)|^{\frac{pt}{p-t}}(1-|w|)^{\frac{pt}{p-t}-1} {\ dm(w)}\right)^{\frac{(p-t)qs}{(q-s)pt}} |d\xi|\\
	&\quad \lesssim N^{\frac{(p-t)qs}{(q-s)p^{2}}} N^{\frac{(p-t)qs}{(q-s)pt}} \int_{\T}\left(\sum_{z_{j}\in \Gamma_{M_\ast}(\xi) }\left(\frac{\mu_{g}^{1/t}(D(z_j,r))}{(1-|z_j|)^{\frac{1}{t}+\frac{1}{p}-1}}\right)^{\frac{pt}{p-t}}\  \right)^{\frac{(p-t)qs}{(q-s)pt}} |d\xi|
	\\&\quad\lesssim\int_{\T}\left(\sum_{z_{j}\in \Gamma_{M}(\xi) }\left(\frac{\mu_{g}^{1/t}(D(z_j,r))}{(1-|z_j|)^{\frac{1}{t}+\frac{1}{p}-1}}\right)^{\frac{pt}{p-t}}\  \right)^{\frac{(p-t)qs}{(q-s)pt}} |d\xi|.
	\end{align*}
	The last inequality follows by Lemma~\ref{estimate 1}. Therefore \eqref{equivalencia(a)} holds.
	
	Combining Theorem~\ref{characterization 2} and Theorem~\ref{non_tangential_char_without_der}, it follows that
	\begin{align*}
	\rho_{\frac{pt}{p-t},\frac{qs}{q-s}}(g)\asymp \|g\|_{T_{\frac{pt}{p-t}}^{\frac{qs}{q-s}}(1)} \asymp \|g'(z)(1-|z|)\|_{T_{\frac{pt}{p-t}}^{\frac{qs}{q-s}}(1)}.
	\end{align*}
	By \eqref{equivalencia(a)} and Remark~\ref{independetregion}, we conclude
	\begin{align*}
	\rho_{\frac{pt}{p-t},\frac{qs}{q-s}}(g)\asymp\left(\int_{\T} \left(\sum_{z_k\in \Gamma_{M}(\xi)} \left(\frac{\mu_{g}^{1/t}(D(z_k,r))}{(1-|z_k|)^{\frac{1}{p}+\frac{1}{t}-1}}\right)^{\frac{pt}{p-t}} \right)^{\frac{(p-t)qs}{(q-s)pt}}\ |d\xi|\right)^{\frac{q-s}{qs}}.
	\end{align*}
	So that, we have proved that \emph{(2)(a)} and \emph{(3)(a)} are equivalent.
	\newline

	Let us show the equivalence between \emph{(2)(b)} and \emph{(3)(b)}, that is,
	\begin{align}\label{2b1}
	&\int_{\T} \left(\sup_{w\in \Gamma_{M}(\xi)} |g'(w)|(1-|w|)^{1+\frac{1}{t}-\frac{1}{p}} \right)^{\frac{qs}{q-s}}\ |d\xi|\\\nonumber
	&\quad \asymp 	\int_{\T} \left(\sup_{z_k\in \Gamma_{M}(\xi)} \frac{\mu_{g}^{1/t}(D(z_k,r))}{(1-|z_k|)^{\frac{1}{p}+\frac{1}{t}-1}} \right)^{\frac{qs}{q-s}}\ |d\xi|.\nonumber
	\end{align}
	
	First, taking supremum over each hyperbolic disc $D(z_k,r)$ and  $M_{+}>M>1/2$ such that
	$$
	\bigcup_{D(z_{k},r)\cap \Gamma_{M} (\xi)\neq \emptyset } D(z_k,r) \subset  \Gamma_{M_{+}}(\xi),
	$$
	it follows that
	\begin{align*}
	&\int_{\T} \left(\sup_{z_k\in \Gamma_{M}(\xi)} \frac{\mu_{g}^{1/t}(D(z_k,r))}{(1-|z_k|)^{\frac{1}{p}+\frac{1}{t}-1}} \right)^{\frac{qs}{q-s}}\ |d\xi| \\
	&\quad\leq \int_{\T} \left(\sup_{w\in \Gamma_{M_{+}}(\xi)} |g'(w)|(1-|w|)^{1+\frac{1}{t}-\frac{1}{p}} \right)^{\frac{qs}{q-s}}\ |d\xi|\\
	&\quad\leq \int_{\T} \left(\sup_{w\in \Gamma_{M}(\xi)} |g'(w)|(1-|w|)^{1+\frac{1}{t}-\frac{1}{p}} \right)^{\frac{qs}{q-s}}\ |d\xi|,
	\end{align*}
	where in the last inequality we use Lemma~\ref{lemma_sup} applied to a dense sequence in $\D$. So, one direction of \eqref{2b1} holds.
	
	Now we continue with the proof of the converse inequality. As before, we can choose $M_{+}>M>1/2$ such that
	$$
	\bigcup_{D(z_{k},r)\cap \Gamma_{M}(\xi)\neq \emptyset } D(z_k,r)\subset \Gamma_{M_+}(\xi).
	$$
	So that, we have
	\begin{align*}
	&\int_{\T} \left(\sup_{w\in \Gamma_{M}(\xi)} |g'(w)|(1-|w|)^{1+\frac{1}{t}-\frac{1}{p}} \right)^{\frac{qs}{q-s}}\ |d\xi|\\
	&\quad \leq \int_{\T} \left(\sup_{z_{k}\in \Gamma_{M_+}(\xi)}\sup_{w\in D(z_k,r)} |g'(w)|(1-|w|)^{1+\frac{1}{t}-\frac{1}{p}} \right)^{\frac{qs}{q-s}}\ |d\xi|.
	\end{align*}
	Taking $\tilde{z}_{k}\in \overline{D(z_k,r)}$ such that $|g'(\tilde{z}_{k})|:=\sup_{w\in\overline{D(z_k,r)}} |g'(w)|$, applying Remark~\ref{remarkestimatedistbound} and Lemma~\ref{MVP} for $s<3r$ we obtain
	\begin{align*}
	&\int_{\T} \left(\sup_{w\in \Gamma_{M}(\xi)} |g'(w)|(1-|w|)^{1+\frac{1}{t}-\frac{1}{p}} \right)^{\frac{qs}{q-s}}\ |d\xi|\\
	&\quad \lesssim \int_{\T} \left(\sup_{z_{k}\in \Gamma_{M_+}(\xi)} |g'(\tilde{z}_{k})|(1-|z_k|)^{1+\frac{1}{t}-\frac{1}{p}} \right)^{\frac{qs}{q-s}}\ |d\xi|\\
	&\quad \lesssim \int_{\T} \left(\sup_{z_{k}\in \Gamma_{M_+}(\xi)} \left(\int_{D(\tilde{z}_{k},s)} |g'(w)|^{t}\ \frac{dm(w)}{(1-|\tilde{z}_{k}|)^{2}}\right)^{1/t}(1-|z_k|)^{1+\frac{1}{t}-\frac{1}{p}} \right)^{\frac{qs}{q-s}}\ |d\xi|\\
	&\quad \lesssim \int_{\T} \left(\sup_{z_{k}\in \Gamma_{M_+}(\xi)} \left(\int_{D({z_k},4r)} |g'(w)|^{t}\ {dm(w)}\right)^{1/t}(1-|z_k|)^{1-\frac{1}{t}-\frac{1}{p}} \right)^{\frac{qs}{q-s}}\ |d\xi|\\
	&\quad = \int_{\T} \left(\sup_{z_{k}\in \Gamma_{M_+}(\xi)} \frac{\mu_{g}^{1/t}(D(z_k,4r))}{(1-|z_k|)^{\frac{1}{t}+\frac{1}{p}-1}} \right)^{\frac{qs}{q-s}}\ |d\xi|.
	\end{align*}
	Recalling the argument we used in the equivalence of \emph{(2)(a)} and \emph{(3)(a)} we can find a positive constant $N=N(r,\kappa)$ such that 
	\begin{align*}
	&\int_{\T} \left(\sup_{w\in \Gamma_{M}(\xi)} |g'(w)|(1-|w|)^{1+\frac{1}{t}-\frac{1}{p}} \right)^{\frac{qs}{q-s}}\ |d\xi|\\
	&\quad \lesssim N^{\frac{qs(1-t)}{q-s}} \int_{\T} \left(\sup_{z_{k}\in \Gamma_{M_+}(\xi)} \sum_{z_j\in D(z_k,5r)}   \frac{\mu_{g}^{1/t}(D(z_j,r))}{(1-|z_k|)^{\frac{1}{t}+\frac{1}{p}-1}} \right)^{\frac{qs}{q-s}}\ |d\xi|\\
	&\quad \lesssim N^{\frac{qs(1-t)}{q-s}}  N^{\frac{qs}{q-s}}\int_{\T} \left(\sup_{z_{k}\in \Gamma_{M_+}(\xi)}\left(\sup_{z_j\in D(z_k,5r)}   \frac{\mu_{g}^{1/t}(D(z_j,r))}{(1-|z_k|)^{\frac{1}{t}+\frac{1}{p}-1}}\right) \right)^{\frac{qs}{q-s}}\ |d\xi|.
	\end{align*}
	By Lemma~\ref{Wu}, one can take $M_{\ast}>M_{+}>1/2$ such that
	\begin{align*}
	\bigcup_{z_{k}\in \Gamma_{M_{+}}(\xi)} D(z_k,5r)\subset \Gamma_{M_{\ast}}(\xi).
	\end{align*}
	So that, we obtain
	\begin{align*}
	&\int_{\T} \left(\sup_{w\in \Gamma_{M}(\xi)} |g'(w)|(1-|w|)^{1+\frac{1}{t}-\frac{1}{p}} \right)^{\frac{qs}{q-s}}\ |d\xi|\\
	&\quad \lesssim\int_{\T} \left(\sup_{z_{k}\in \Gamma_{M_{\ast}}(\xi)}  \frac{\mu_{g}^{1/t}(D(z_k,r))}{(1-|z_k|)^{\frac{1}{t}+\frac{1}{p}-1}} \right)^{\frac{qs}{q-s}}\ |d\xi| \\
	&\quad \lesssim\int_{\T} \left(\sup_{z_{k}\in \Gamma_{M}(\xi)}  \frac{\mu_{g}^{1/t}(D(z_k,r))}{(1-|z_k|)^{\frac{1}{t}+\frac{1}{p}-1}} \right)^{\frac{qs}{q-s}}\ |d\xi| .
	\end{align*}
	
	Last inequality follows by Lemma~\ref{lemma_sup}. Therefore, we have proved that \emph{(2)(b)} and \emph{(3)(b)} are equivalent.
	\newline

	The equivalence between \emph{(2)(c)} and \emph{(3)(c)} follows  immediately. Observe that for each $D(z_k,r)$
	\begin{align*}
	\frac{\mu_{g}^{1/t}(D(z_k,r))}{(1-|z_k|)^{\frac{1}{p}+\frac{1}{q}+\frac{1}{t}-\frac{1}{s}-1}}\lesssim \sup_{z\in\D} |g'(z)|(1-|z|)^{1+\frac{1}{t}+\frac{1}{s}-\frac{1}{p}-\frac{1}{q}}.
	\end{align*}
	So, one implication holds.
	Now, set $z\in\D$. Applying the mean value property  it occurs that
	$$ |g'(z)|(1-|z|)^{1+\frac{1}{t}+\frac{1}{s}-\frac{1}{p}-\frac{1}{q}}\lesssim \left(\int_{D(z,r)} |g'(w)|^{t}(1-|w|)^{t+\frac{t}{s}-\frac{t}{p}-\frac{t}{q}-1}\ dm(w)\right)^{1/t}.
	$$
	Since the number of the hyperbolic discs $D(z_k,r)$ such that $D(z_k,r)\cap D(z,r)\neq \emptyset$ can be at most a fixed number (see Proposition~\ref{Ncoverdiscs}), we have
	\begin{align*}
	|g'(z)|(1-|z|)^{1+\frac{1}{t}+\frac{1}{s}-\frac{1}{p}-\frac{1}{q}}\lesssim \ \sup_{k}\frac{\mu_{g}^{1/t}(D(z_k,r))}{(1-|z_k|)^{\frac{1}{p}+\frac{1}{q}+\frac{1}{t}-\frac{1}{s}-1}}.
	\end{align*}
	
	Now, we show that \emph{(3)(d)} implies  \emph{(2)(d)}. Fix an arc $I\subset \T$. Then
	\begin{align*}
	\sum_{z_k\in S(I)} & \left(\frac{\mu_{g}^{1/t}(D(z_k,r))}{(1-|z_k|)^{\frac{1}{p}+\frac{1}{t}-1}}\right)^{\frac{pt}{p-t}}(1-|z_k|)
	\\ & \lesssim  \sum_{z_k\in S(I)}  |g'(\tilde{z}_k)|^{\frac{pt}{p-t}}(1-|z_k|)^{2+\frac{pt}{p-t}},
	\end{align*}
	where $|g'(\tilde{z}_k)|:=\sup_{w\in \overline{D(z_k,r)}} |g'(w)|$.
	Using the mean value property on  $D(\tilde{z}_k,s)$ (see Lemma~\ref{MVP}) with $s<3r$, it follows that
	\begin{align*}
	&\sum_{z_k\in S(I)} \left(\frac{\mu_{g}^{1/t}(D(z_k,r))}{(1-|z_k|)^{\frac{1}{p}+\frac{1}{t}-1}}\right)^{\frac{pt}{p-t}}(1-|z_k|)\\
	&\quad\lesssim \sum_{z_k\in S(I)} \frac{(1-|z_k|)^{2+\frac{pt}{p-t}}}{(1-|\tilde{z}_k|)^{2}}\left(\int_{D(\tilde{z}_k,s)} |g'(w)|^{\frac{pt}{p-t}}\ dm(w)\right)
	\end{align*}
	Since $D(\tilde{z}_k,s)\subset D(z_k,4r)$, we obtain
	\begin{align*}
	&\sum_{z_k\in S(I)} \left(\frac{\mu_{g}^{1/t}(D(z_k,r))}{(1-|z_k|)^{\frac{1}{p}+\frac{1}{t}-1}}\right)^{\frac{pt}{p-t}}(1-|z_k|)\\
	&\quad\lesssim  \sum_{z_k\in S(I)} (1-|z_k|)^{\frac{pt}{p-t}} \left(\int_{D({z}_k, 4r)} |g'(w)|^{\frac{pt}{p-t}}\ dm(w) \right)\\
	&\quad\asymp \sum_{z_k\in S(I)} \int_{D({z}_k, 4r)} (1-|w|)^{\frac{pt}{p-t}}|g'(w)|^{\frac{pt}{p-t}}\ {dm(w)} \,.
	\end{align*}
	Taking an arc $I_+\subset \T$ such that $\bigcup_{z_k\in S(I)}D(z_k,4r)\subset S(I_+)$ with $|I|\asymp |I_{+}|$ and employing the finite covering property of the lattice we have 
	\begin{align*}
	&\sum_{z_k\in S(I)} \left(\frac{\mu_{g}^{1/t}(D(z_k,r))}{(1-|z_k|)^{\frac{1}{p}+\frac{1}{t}-1}}\right)^{\frac{pt}{p-t}}(1-|z_k|)
	\lesssim \int_{S(I_+)} (1-|w|)^{\frac{pt}{p-t}}|g'(w)|^{\frac{pt}{p-t}}\ {dm(w)}
	\end{align*}
	which is enough in order to claim that we have accomplished our aim.
	
	Let us continue proving that \emph{(2)(d)} implies \emph{(3)(d)}. Fix an arc $I\subset \T$.  Then
	\begin{align*}
	&\int_{S(I)} (1-|w|)^{\frac{pt}{p-t}}|g'(w)|^{\frac{pt}{p-t}}\ {dm(w)}\\
	& \quad \lesssim \quad \sum_k   \int_{S(I)\cap D(z_k, r)}(1-|w|)^{\frac{pt}{p-t}}|g'(w)|^{\frac{pt}{p-t}}\ {dm(w)}
	\end{align*}
	Bearing in mind that we can take $I_+$ such that $\bigcup_{D(z_k, r)\cap S(I) \neq \emptyset} D(z_k,r)\subset S(I_+)$ with $|I|\asymp |I_{+}|$, we have
	\begin{align*}
	&\int_{S(I)} (1-|w|)^{\frac{pt}{p-t}}|g'(w)|^{\frac{pt}{p-t}}\ {dm(w)}\leq \sum_{z_k\in S(I_{+})} \int_{D(z_k,r)} (1-|w|)^{\frac{pt}{p-t}}|g'(w)|^{\frac{pt}{p-t}}\ {dm(w)}\\
	&\quad \asymp \sum_{z_k\in S(I_{+})} (1-|z_k|)^{\frac{pt}{p-t}}\int_{D(z_k,r)} |g'(w)|^{\frac{pt}{p-t}}\ {dm(w)}.
	\end{align*}
	The application of the submean value property leads to 
	\begin{align*}
	&\int_{S(I)} (1-|w|)^{\frac{pt}{p-t}}|g'(w)|^{\frac{pt}{p-t}}\ {dm(w)}\\
	&\quad \lesssim \sum_{z_k\in S(I_{+})} (1-|z_k|)^{\frac{pt}{p-t}}\int_{D(z_k,r)}
	\left( \frac{1}{(1 - |z_k|)^2}\int_{D(z_k, 2r)}\,|g'(u)|^t\,dm(u) \right)^{\frac{p}{p-t}}\ {dm(w)}\\
	&\quad  \asymp \sum_{z_k\in S(I_{+})} (1-|z_k|)^{\frac{pt}{p-t}-2 \frac{p}{p-t} +2}
	\left( \mu_{g} ^{\frac 1t}(D(z_k,2r)\right)^{\frac{pt}{p-t}}\\
	&\quad  = \sum_{z_k\in S(I_{+})} (1-|z_k|)^{\frac{pt}{p-t}-2 \frac{p}{p-t} +1}
	\left( \mu_{g} ^{\frac 1t}(D(z_k,2r)\right)^{\frac{pt}{p-t}} (1-|z_k|)\\
	& \quad = \sum_{z_k\in S(I_{+})}
	\left( \frac {\mu_{g} ^{\frac 1t}(D(z_k,2r))} {(1-|z_k|)^{\frac{1}{p}+\frac{1}{t}-1}}\right)^{\frac{pt}{p-t}} \,  (1-|z_k|)\,.
	\end{align*}
	Since the hyperbolic discs $D(z_k,2r)$ can be covered at most by a fixed number of hyperbolic discs $D(z_j,r)$ and by the fact that one can take $I_{\ast}\subset \T$ such that $\bigcup_{z_k\in S(I_+)} D(z_k,3r)\subset S(I_\ast)$ with $|I_+|\asymp |I_\ast|$, then
	\begin{align*}
	&\int_{S(I)} (1-|w|)^{\frac{pt}{p-t}}|g'(w)|^{\frac{pt}{p-t}}\ {dm(w)} \lesssim \sum_{z_k\in S(I_{\ast})}
	\left( \frac {\mu_{g} ^{\frac 1t}(D(z_k,r))} {(1-|z_k|)^{\frac{1}{p}+\frac{1}{t}-1}}\right)^{\frac{pt}{p-t}} \,  (1-|z_k|)\,.
	\end{align*}		
	Therefore, we are done.
\end{proof}

Closing this section we consider the action of $T_g: RM(p,q)\rightarrow RM(t,s)$ when one of the parameters is infinite, in particular when $t=+\infty$. In other words, when $T_g: RM(p,q)\rightarrow H^s$, since $RM(\infty,s)=H^s$. The specific case $p=q$, that is when $T_g:A^p\rightarrow H^s$, has already been considered by Wu in \cite{Wu_2011} and subsequently by Miihkinen, Pau, Perälä, and Wang in \cite{MiihkinenPauPeralaWang2020} for the multivariable case. We complete the scene for the more general cases of indices as stated below.


\begin{theorem}\label{RMH}
	Let $1\leq p,q,s<+\infty$. The following statements are equivalent:
	\begin{enumerate}
		\item The operator $T_g: RM(p,q)\rightarrow H^s$ is bounded.
		\item  If $Z=\{z_k\}$ is an $(r,\kappa)$-lattice and denoting $d\mu_g(z)=|g'(z)|^2\ dm(z)$ it holds
		\begin{enumerate}
			\item If $1\leq s<q<+\infty$, $2<p<+\infty$, 
			\begin{align*}
			\int_{\T} \left(\sum_{z_k\in \Gamma(\xi)} \left(\frac{\mu_{g}^{1/2}(D(z_k,r))}{(1-|z_k|)^{1/p}}\right)^{\frac{2p}{p-2}} \right)^{\frac{(p-2)qs}{(q-s)2p}}\ |d\xi|<+\infty.
			\end{align*}
			\item If $1\leq s< q<+\infty$, $1\leq p\leq 2$, 
			\begin{align*}
			\int_{\T} \left(\sup_{z_k\in \Gamma(\xi)} \frac{\mu_{g}^{1/2}(D(z_k,r))}{(1-|z_k|)^{1/p}} \right)^{\frac{qs}{q-s}}\ |d\xi|<+\infty.
			\end{align*}
			\item If $1\leq q< s<+\infty$, $1\leq p<\infty$ or $1\leq q=s<+\infty$, $1\leq p\leq 2$, 
			\begin{align*}
			\sup_{k} \frac{\mu_{g}^{1/2}(D(z_k,r))}{(1-|z_k|)^{\frac{1}{p}+\frac{1}{q}-\frac{1}{s}}}<+\infty.
			\end{align*}
			\item If $1\leq q=s<+\infty$, $2< p<+\infty$, 
			\begin{align*}
			\sup_{\xi\in\T} \left(\sup_{\xi\in I}  \sum_{z_{k}\in S(I)} \left(\frac{\mu_{g}^{1/2}(D(z_k,r))}{(1-|z_k|)^{1/p}}\right)^{\frac{2p}{p-2}} (1-|z_k|) \right)^{\frac{p-2}{2p}}<+\infty.
			\end{align*}
		\end{enumerate}
		\item The function $g\in \mathcal{H}(\D)$ satisfies that
		\begin{enumerate}
			\item If $1\leq s<q<+\infty$, $2<p<+\infty$, 
			\begin{align*}
			|g'(z)|(1-|z|)^{\frac{1}{2}}\in T_{\frac{2p}{p-2}}^{\frac{qs}{q-s}}(1).
			\end{align*}
			\item If $1\leq s< q<+\infty$, $1\leq p\leq 2$, 
			\begin{align*}
			|g'(z)|(1-|z|)^{1-\frac{1}{p}}\in T_{\infty}^{\frac{qs}{q-s}}.
			\end{align*}
			\item If $1\leq q< s<+\infty$, $1\leq p<\infty$ or $1\leq q=s<+\infty$, $1\leq p\leq 2$, $g\in\mathcal{B}^{1+\frac{1}{s}-\frac{1}{p}-\frac{1}{q}}$.
			\item If $1\leq q=s<+\infty$, $2< p<+\infty$, the measure $|g'(z)|^{\frac{2p}{p-2}}(1-|z|)^{\frac{p}{p-2}}\ dm(z)$ is a Carleson measure.
		\end{enumerate}
	\end{enumerate}	
\end{theorem}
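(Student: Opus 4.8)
The plan is to derive the whole statement from three facts already at our disposal: the identification $RM(p,q)=AT_p^q(1)$ with comparable norms (Proposition~\ref{non_tangential_char_without_der}), the Lusin area description of the Hardy norm recorded in \eqref{Hardy2} and at the start of Section~3, and the Carleson embedding of Theorem~\ref{theoremCarlesonTent}. The structure will be: reduce $(1)\Leftrightarrow(2)$ to Theorem~\ref{theoremCarlesonTent} with the special choice $t=2$, $\alpha=1$; then prove $(2)\Leftrightarrow(3)$ by the same discretization-to-continuum argument used in Theorem~\ref{RMRM}.

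First I would prove $(1)\Leftrightarrow(2)$. Since $(T_gf)'(z)=f(z)g'(z)$ and $T_gf(0)=0$, the area-function characterization $\|h\|_{H^s}^s\asymp\int_{\T}\left(\int_{\Gamma(\xi)}|h'(z)|^2\,dA(z)\right)^{s/2}\,|d\xi|$ gives
$$\|T_gf\|_{H^s}^s\asymp\int_{\T}\left(\int_{\Gamma(\xi)}|f(z)|^2\,d\mu_g(z)\right)^{s/2}\,|d\xi|,\qquad d\mu_g(z)=|g'(z)|^2\,dm(z),$$
while $\|f\|_{RM(p,q)}\asymp\|f\|_{T_p^q(1)}$ by Proposition~\ref{non_tangential_char_without_der}. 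Hence $T_g\colon RM(p,q)\to H^s$ is bounded precisely when
$$\left(\int_{\T}\left(\int_{\Gamma(\xi)}|f(w)|^2\,d\mu_g(w)\right)^{s/2}\,|d\xi|\right)^{1/s}\lesssim\|f\|_{T_p^q(1)},\qquad f\in AT_p^q(1),$$
which is assertion $(1)$ of Theorem~\ref{theoremCarlesonTent} with $t=2$ and $\alpha=1$ (all the norms involved being aperture-independent, so the fixed $M>1/2$ in the area function is harmless). Reading off assertion $(2)$ of that theorem in this case — so that $\mu^{1/t}(D(z_k,r))/(1-|z_k|)^{\alpha/p}$ becomes $\mu_g^{1/2}(D(z_k,r))/(1-|z_k|)^{1/p}$, and the dichotomies $t<p$ versus $p\le t$ become $2<p$ versus $p\le 2$ — reproduces exactly the four cases of $(2)$.

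Next I would establish $(2)\Leftrightarrow(3)$, the passage from the discrete quantities $\mu_g(D(z_k,r))$ to honest integrals of $g'$. This runs exactly parallel to the equivalence of $(2)$ and $(3)$ in Theorem~\ref{RMRM}; the only changes are that the measure at issue is now $|g'(w)|^2\,dm(w)$ — the area function carries no $(1-|w|)$ weight, unlike the $|g'(w)|^t(1-|w|)^{t-1}\,dm(w)$ there — and that $t$ is frozen at $2$. The tools are the submean value property (Lemma~\ref{MVP}), which gives $\mu_g(D(z_k,r))\asymp|g'(\tilde z_k)|^2(1-|z_k|)^2$ with $|g'(\tilde z_k)|:=\sup_{w\in\overline{D(z_k,r)}}|g'(w)|$; the comparison $1-|w|\asymp 1-|z_k|$ on $D(z_k,r)$ (Remark~\ref{remarkestimatedistbound}); the bounded overlap of dilated lattice discs (Proposition~\ref{Ncoverdiscs}); the enlargement of Stolz regions from \cite[Lemma 2.3, p. 992]{Wu_2006} together with Lemma~\ref{estimate 1} to move between apertures; and, in case (b), Lemma~\ref{lemma_sup}. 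Concretely: in case (a) one shows $\sum_{z_k\in\Gamma(\xi)}\left(\mu_g^{1/2}(D(z_k,r))(1-|z_k|)^{-1/p}\right)^{\frac{2p}{p-2}}\asymp\int_{\Gamma(\xi)}|g'(w)|^{\frac{2p}{p-2}}(1-|w|)^{\frac{2}{p-2}}\,dm(w)$, the latter being $\|g'(z)(1-|z|)^{1/2}\|_{T_{2p/(p-2)}^{qs/(q-s)}(1)}$ raised to the appropriate power; in case (b) the discrete supremum equals, up to constants, $\esssup_{w\in\Gamma(\xi)}|g'(w)|(1-|w|)^{1-\frac1p}$, i.e. the $T_\infty^{qs/(q-s)}$-norm of $g'(z)(1-|z|)^{1-\frac1p}$; in case (c), $\sup_k|g'(\tilde z_k)|(1-|z_k|)^{1+\frac1s-\frac1p-\frac1q}\asymp\sup_{z\in\D}|g'(z)|(1-|z|)^{1+\frac1s-\frac1p-\frac1q}$, i.e. $g\in\mathcal{B}^{1+\frac1s-\frac1p-\frac1q}$; and in case (d) the supremum over arcs $I$ (with $\xi\in I$) of $\sum_{z_k\in S(I)}\left(\mu_g^{1/2}(D(z_k,r))(1-|z_k|)^{-1/p}\right)^{\frac{2p}{p-2}}(1-|z_k|)$ transfers, by the same discretization, to the corresponding supremum of $\int_{S(I)}|g'(w)|^{\frac{2p}{p-2}}(1-|w|)^{\frac{p}{p-2}}\,dm(w)$, which is the Carleson-type condition of $(3)(d)$ (in the same sense as in Theorem~\ref{RMRM}). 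If one prefers to phrase case (a) as an $RM$-membership, Theorem~\ref{characterization 2} and Proposition~\ref{non_tangential_char_without_der} identify the $T_{2p/(p-2)}^{qs/(q-s)}(1)$-norm of $g'(z)(1-|z|)$ with $\rho_{2p/(p-2),\,qs/(q-s)}(g)$.

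The hard part will be not any single estimate but the region bookkeeping in cases (a) and (d): one repeatedly replaces a sum over lattice points by an integral over a slightly enlarged Stolz region (or Carleson square) and conversely, each time absorbing the cost into the overlap constant of Proposition~\ref{Ncoverdiscs} and restoring the original aperture via Lemma~\ref{estimate 1}. As this is carried out in full detail in the proof of Theorem~\ref{RMRM}, in the present theorem it is enough to record the two substitutions $t\mapsto 2$ and $(1-|w|)^{t-1}\mapsto 1$ throughout that argument, together with the initial reduction via the area-function description of $H^s$.
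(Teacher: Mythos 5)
Your proposal is correct and follows essentially the same route as the paper: the authors also reduce $(1)\Leftrightarrow(2)$ to Theorem~\ref{theoremCarlesonTent} via the Lusin area description of $\|T_g f\|_{H^s}$ with $d\mu_g=|g'|^2\,dm$ (so $t=2$, $\alpha=1$), and dispose of $(2)\Leftrightarrow(3)$ by repeating the discretization argument of Theorem~\ref{RMRM} with the weight $(1-|w|)^{t-1}$ replaced by $1$. Your exponent bookkeeping in cases (a)--(d) matches the paper's conclusions, so nothing further is needed.
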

\begin{proof}
	The proof of the equivalences between \emph{(1)} and \emph{(2)} follows as a consequence of Theorem~\ref{theoremCarlesonTent}.
	 This is due to the equivalent description of the Hardy norm as  
	\begin{align*}
	\|T_g(f)\|_{H^s}\asymp \left(\int_{\T} \left(\int_{\Gamma(\xi)} |f(w)|^2 |g'(w)|^{2}\ dm(z)\right)^{s/2}\ |d\xi|\right)^{1/s}
	\end{align*}
	(see \cite[Theorem 1.3, p. 172]{Pavlovic}) and to the consideration  $d\mu(z)=|g'(z)|^2\ dm(z)$.
	
	The remaining equivalence results using the same arguments that were  used in the proof of Theorem~\ref{RMRM}.
	
\end{proof}

\noindent {\bf Acknowledgments.} We are grateful to Professor Manuel D. Contreras and Professor Luis Rodríguez-Piazza for their useful comments.

\end{document}